\theoremstyle{definition}
\newtheorem{thm}{Theorem}[section]
\newtheorem{cor}[thm]{Corollary}
\newtheorem{lem}[thm]{Lemma}
\newtheorem{prop}[thm]{Proposition}
\newtheorem{defn}[thm]{Definition}
\theoremstyle{remark}
\newtheorem{rem}[thm]{Remark}
\numberwithin{equation}{section}
\newcommand{\R}{\mathbb{R}}
\newcommand{\N}{\mathbb{N}}
\newcommand{\C}{\mathbb{C}}
\newcommand{\fA}{\mathcal{A}}
\newcommand{\fB}{\mathcal{B}}
\newcommand{\fC}{\mathcal{C}}
\newcommand{\fL}{\mathcal{L}}
\newcommand{\Lc}{\mathrm{Ly}}
\newcommand{\om}{\omega}
\newcommand{\sg} {\sigma}
\newcommand{\supp}{\text{supp}}
\newcommand{\pf}{\textnormal{PF}}
\newcommand{\la}{\langle}
\newcommand{\ra}{\rangle}
\renewcommand{\subset}{\subseteq}
\author[Anderson-Sackaney]{Benjamin Anderson-Sackaney}
\address{Department of Mathematics and Statistics, University of Victoria, Victoria, British Columbia, V8P 5C2, Canada}
\email{bandersonsackaney@uvic.ca}
\thanks{BA-S was supported by PIMS and the Simons Foundation - Simons grant PPTW GR023618.}
\author[de Laat]{Tim de Laat}
\address{Department of Mathematics, Kiel University, Heinrich-Hecht-Platz 6, 24118 Kiel, Germany}
\email{delaat@math.uni-kiel.de}
\thanks{TdL was supported by the Deutsche Forschungsgemeinschaft under Germany’s Excellence Strategy -- EXC 2044 -- 390685587, Mathematics M\"unster: Dynamics -- Geometry -- Structure, and the Deutsche Forschungsgemeinschaft -- Project-ID 427320536 -- SFB 1442.}
\author[Samei]{Ebrahim Samei}
\address{Department of Mathematics and Statistics, University of Saskatchewan, Saskatoon, Saskatchewan, S7N 5E6, Canada}
\email{ebrahim.samei@usask.ca}
\thanks{ES was partially supported by NSERC Discovery Grant RGPIN-2025-04833.}
\author[Wiersma]{Matthew Wiersma}
\address{Department of Mathematics and Statistics, University of Winnipeg, 515 Portage Avenue, Winnipeg, Manitoba, Canada  R3B 2E9}
\email{m.wiersma@uwinnipeg.ca}
\thanks{MW was partially supported by NSERC Discovery Grant RGPIN-2022-03062.}
\title[Entropies and Poisson boundaries of random walks]
{Entropies and Poisson boundaries of random walks on groups with rapid decay}
\begin{document}

\begin{abstract}
    Let $G$ be a countable group and $\mu$ a probability measure on $G$. We build a new framework to compute asymptotic quantities associated with the $\mu$-random walk on $G$, using methods from harmonic analysis on groups and Banach space theory, most notably complex interpolation. It is shown that under mild conditions, the Lyapunov exponent of the $\mu$-random walk with respect to a weight $\om$ on $G$ can be computed in terms of the asymptotic behavior of the spectral radius of $\mu$ in an ascending class of weighted group algebras, and we prove that for natural choices of $\om$ and $\mu$, the Lyapunov exponent vanishes. Also, we show that the Avez entropy of the $\mu$-random walk can be realized as the Lyapunov exponent of $\mu$ with respect to a suitable weight.
    
    We apply our results to stationary dynamical systems consisting of an action of a group with the property of rapid decay on a \hyphenation{pro-ba-bi-li-ty}probability space. We prove that whenever the associated Koopman representation is weakly contained in the left-regular representation of the group, then the Avez entropy coincides with the Furstenberg entropy of the stationary space. This gives a characterization of (Zimmer) amenability for actions of rapid decay groups on stationary spaces.

    Next, by considering the spectral radius in the algebras of $p$-pseudofunctions on $G$, we introduce a new asymptotic quantity, which we call convolution entropy. We show that for groups with the property of rapid decay, the convolution entropy coincides with the Avez entropy.

\end{abstract}

\maketitle

\section{Introduction} \label{sec:introduction}
Let $G$ be a countable group with identity element $e$, and let $\mu$ be a probability measure on $G$. Let $(Y_n)_{n \in \N}$ be a sequence of $G$-valued independent identically distributed random variables with law $\mu$. Let $W_0 := e$, and set $W_n := Y_1\ldots Y_n$, i.e.~$W_n$ is distributed with law $\mu^{*n}$. The random process $(W_n)_{n \in \N_0}$ is called the $\mu$-random walk on $G$ (with starting point $e$).

Asymptotic quantities and properties associated with random walks on a group $G$ reflect important information on the structure of $G$. For instance, for finitely generated groups, Kesten showed in his seminal work that given a symmetric probability measure $\mu$ supported on a finite generating set of $G$, the norm of the Markov operator $M : \ell^2(G) \to \ell^2(G)$ associated with the $\mu$-random walk, which is defined by $Mf(s) = \sum_{t \in G} f(st)\mu(t)$, equals $1$ if and only if $G$ is amenable \cite{Kesten}.

One of the most prominent asymptotic quantities associated with the $\mu$-random walk on $G$ is its (Avez) entropy, which is defined as
\begin{align*}
h(G,\mu) := - \lim_{n \rightarrow \infty} \frac{1}{n} \sum_{s \in G} \mu^{*n} (s) \, \log \mu^{*n} (s).
\end{align*}
It is a fundamental result due to Avez \cite{Avez}, Derriennic \cite{Derriennic} and Kaimanovich--Vershik \cite{Kaim-Vers 1} that if $\mu$ is a non-degenerate probability measure satisfying some mild conditions, the entropy of the $\mu$-random walk vanishes if and only if all bounded $\mu$-harmonic functions on $G$ are constant. A function $f : G \to \R$ is $\mu$-harmonic if $Mf(s)=\sum_{t \in G} f(st)\mu(t)=f(s)$. The property that every bounded $\mu$-harmonic function is constant coincides with the triviality of the Poisson boundary of the $\mu$-random walk, which is a measure space associated with the random walk that reflects structural and asymptotic properties of $G$ (see Section \ref{subsec:poissonboundary}). For background on random walks on groups, we refer to \cite{Lyons-Peres,woess,Zheng}.

Another important asymptotic quantity associated with a $\mu$-random walk is its Lyapunov exponent, which is defined with respect to a weight on $G$. Under mild conditions on the weight $\om$, the Lyapunov exponent of the $\mu$-random walk with respect to $\om$ is defined as
\begin{align*}
\Lc_\om(G,\mu):=\displaystyle\lim_{n\rightarrow \infty}\dfrac{1}{n}\sum_{s\in G}\mu^{\ast n}(s) \, \log \omega (s).
\end{align*}
This notion of Lyapunov exponent goes back to \cite{Nevo1}. In more general settings, Lyapunov exponents and spectra play a pivotal role in the areas of dynamical systems and random processes. They have a rich history and there are deep connections between them and random walks on groups; see e.g.~\cite{Furm1,Wilkinson2017}.

The first aim of this article is to build a natural framework that gives new ways to compute the Lyapunov exponent and the Avez entropy, leading to fundamental relations between them. This involves methods from harmonic analysis on groups and functional analysis. In particular, we use techniques from Banach space theory, most notably complex interpolation. Our first concrete result is Theorem \ref{T:Lyap expo-weighted L1}, which gives an explicit way to compute the Lyapunov exponent of the $\mu$-random walk with respect to a weight $\om$ in terms of the asymptotic behavior of the spectral radius of $\mu$ in an ascending class of weighted group algebras. Then, relying on a classical result from harmonic analysis and potential theory, namely the theorem of de la Vall\'ee Poussin, we derive a vanishing result for the Lyapunov exponent in Theorem \ref{T:vanishing Lyap expo-log of lenght function} for a specific class of weights and natural conditions on $\mu$ that will be of importance later. Next, we define a weighted version of the Avez entropy (see Definition \ref{D:weighted Avez entropy}), which we denote by $h_\om(G,\mu)$, whose computation relies heavily on complex interpolation results. Subsequently, in Theorem \ref{T:Lyap Expo dominates entropy-inverse integrable weights}, we prove that the Avez entropy of the $\mu$-random walk can be realized as the Lyapunov exponent of $\mu$ with respect to a suitable weight.

In Section \ref{sec:stationarydynamicalsystems}, we apply our framework to stationary dynamical systems. Let $(G,\mu)$ be as before, and let $(X,\xi)$ be a standard probability space such that $G$ acts measurably on $(X,\xi)$. The measure $\xi$ is $\mu$-stationary if $\mu \ast \xi = \xi$. Then $(X,\xi)$ is called a $(G,\mu)$-stationary space. Both measure-preserving actions as well as continuous boundary actions on compact spaces are examples of stationary actions. This class of actions has been studied extensively; see \cite{Furs-Glas 1} for an overview.

Analytic properties of a dynamical system $G \curvearrowright (X,\xi)$ can be studied through the associated Koopman representation $\pi_X$ of $G$ and the asymptotic behavior of matrix coefficients of $\pi_X$. Using our framework, this leads to a formula for the Furstenberg entropy $h_\mu(X,\xi)$ with respect to $(G,\mu)$ whenever $G$ has the property of rapid decay (shortly, property RD), which is an analytic regularity property shared among several classes of very different types of groups (see Section \ref{subsec: groups with RD} for the definition of this property as well as some general background and a discussion of examples). The following is our first main result.

\begin{thm}[Theorem \ref{T:Square integrable-inverse Harich Chandra function}] \label{T:Square integrable-inverse Harich Chandra function introduction}
Let $G$ be a countable group satisfying property RD with respect to a length function $\mathcal{L}$, and let $\mu$ be a non-degenerate probability measure on $G$ with both finite entropy and finite $\log (1+\fL)$-moment. Let $(X,\xi)$ be a $(G,\mu)$-stationary space such that the associated Koopman representation $\pi_X$ is weakly contained in the left-regular representation $\lambda_G$. Then
\begin{align*}
    h_\mu(X,\xi)=h(G,\mu)=-2\lim_{n\rightarrow \infty}\dfrac{1}{n}\sum_{s\in G}\mu^{\ast n}(s) \log \Xi_X(s).
\end{align*}
\end{thm}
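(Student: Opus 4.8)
The plan is to squeeze the quantity
\[
I:=-2\lim_{n\to\infty}\frac{1}{n}\sum_{s\in G}\mu^{*n}(s)\log\Xi_X(s),\qquad
\Xi_X(s)=\langle\pi_X(s)1_X,1_X\rangle=\int_X\Bigl(\tfrac{ds^{-1}\xi}{d\xi}\Bigr)^{1/2}\,d\xi,
\]
between $h_\mu(X,\xi)$ and $h(G,\mu)$. Two of the four comparisons that make this work are already available: the classical maximality of the Poisson boundary among $(G,\mu)$-stationary spaces for the Furstenberg entropy gives $h_\mu(X,\xi)\le h(G,\mu)$, and Theorem~\ref{T:Avez entropy-p conv operators-RD intro} identifies $h(G,\mu)$ with the convolution entropy $c(G,\mu)$ of Definition~\ref{d:convolutionentropy}. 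Hence it suffices to prove
\[
c(G,\mu)\ \le\ I\ \le\ h_\mu(X,\xi),
\]
after which the chain $c(G,\mu)\le I\le h_\mu(X,\xi)\le h(G,\mu)=c(G,\mu)$ collapses and delivers all the asserted identities (in particular the existence of the limit defining $I$; until that is secured one works with $\liminf$ and $\limsup$).

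\emph{The estimate $I\le h_\mu(X,\xi)$.} This uses only that $(X,\xi)$ is $\mu$-stationary. Since $\Xi_X(s)$ is the $\xi$-integral of the square root of a Radon--Nikodym density of total mass $1$, concavity of $\log$ and Jensen's inequality give $-2\log\Xi_X(s)\le-\int_X\log\bigl(\tfrac{ds^{-1}\xi}{d\xi}\bigr)\,d\xi$ for every $s\in G$. Summing against $\mu^{*n}$ and using the additivity of the Furstenberg entropy over convolution powers (a consequence of the Radon--Nikodym cocycle identity together with $\mu$-stationarity), one obtains $-2\sum_{s}\mu^{*n}(s)\log\Xi_X(s)\le n\,h_\mu(X,\xi)$; dividing by $n$ and letting $n\to\infty$ concludes, the finite $\log(1+\fL)$-moment guaranteeing convergence of all sums.

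\emph{The estimate $c(G,\mu)\le I$ -- the heart of the proof.} Both hypotheses enter here. Weak containment $\pi_X\prec\lambda_G$ means $\|\pi_X(f)\|\le\|\lambda_G(f)\|$ for every $f\in\ell^1(G)$ and, via Fell's criterion, realizes the positive-definite function $\Xi_X$ as a pointwise limit of diagonal matrix coefficients $s\mapsto\langle\lambda_G(s)\zeta,\zeta\rangle$, $\zeta\in\ell^2(G)$, of the left-regular representation; this is the channel through which $\Xi_X$ is made to interact with the reduced $C^*$-algebra and with the algebras of $q$-pseudofunctions. Property RD then provides the quantitative input: its defining estimate $\|\lambda_G(f)\|\le C\,\|f\|_{\ell^2(G,(1+\fL)^d)}$ from Definition~\ref{D:RD}, run through the complex-interpolation scheme underlying Theorem~\ref{T:Lyap expo-weighted L1} and the weighted Avez entropy, converts the preceding into a lower bound for $r_{\pf_q(G)}(\mu)$ in terms of the $\Xi_X$-weighted averages $\frac{1}{n}\sum_{s}\mu^{*n}(s)\log\Xi_X(s)$, with only multiplicative errors that grow polynomially in $1+\fL$ along the random walk. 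The Lyapunov exponent of any such weight vanishes by Theorem~\ref{T:vanishing Lyap expo-log of lenght function} -- which is precisely where the finite $\log(1+\fL)$-moment is needed -- so these errors wash out once one lets first $n\to\infty$ and then, following Definition~\ref{d:convolutionentropy}, $p\to\infty$ (equivalently $q\to 1^{+}$), leaving $c(G,\mu)\le-2\lim_{n}\frac{1}{n}\sum_{s}\mu^{*n}(s)\log\Xi_X(s)=I$.

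\emph{Expected main obstacle.} The delicate point is the joint passage $n\to\infty$, $q\to 1^{+}$ in the last step. The vectors witnessing $\pi_X\prec\lambda_G$ must be chosen to reproduce $\Xi_X$ over the support of $\mu^{*n}$ -- a set of exponential size -- while their $\ell^q$--$\ell^{q'}$ norms stay controlled uniformly as $q\to 1$; a crude choice loses an exponential factor that overwhelms the $p\to\infty$ limit. Securing the right estimate is exactly the regularity property RD is designed to deliver (and is why the theorem is confined to RD groups), but extracting it requires the full strength of the interpolation estimates developed in the earlier sections, together with Theorem~\ref{T:vanishing Lyap expo-log of lenght function} to absorb the polynomial-in-$\fL$ losses. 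Once $c(G,\mu)\le I$ is in hand the squeeze closes, giving $h_\mu(X,\xi)=h(G,\mu)=I$, i.e.~the displayed formula.
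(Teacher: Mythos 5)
Your upper half of the squeeze is correct and is exactly what the paper does: $-2\log\Xi_X(s)\le-\int_X\log\rho(s,x)\,d\xi(x)$ by Jensen, summing against $\mu^{*n}$ and using $h_{\mu^{*n}}(X,\xi)=n\,h_\mu(X,\xi)$ gives $I\le h_\mu(X,\xi)$, and $h_\mu(X,\xi)\le h(G,\mu)$ is the Kaimanovich--Vershik maximality \eqref{Eq:maximality of Furstenberg boundary}. The lower half, however, has two genuine gaps. First, your reduction invokes $h(G,\mu)=c(G,\mu)$ from Theorem \ref{T:Avez entropy-p conv operators-RD}, but that theorem requires a finite $\alpha$-moment with respect to $1+\fL$ for some $\alpha>0$, which is strictly stronger than the finite $\log(1+\fL)$-moment assumed in the statement you are proving; under the stated hypotheses only the inequality $c(G,\mu)\le h(G,\mu)$ of Theorem \ref{T:Avez entropy-p conv operators-I} is available, so the chain $c(G,\mu)\le I\le h_\mu(X,\xi)\le h(G,\mu)=c(G,\mu)$ does not close. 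Second, even granting that reduction, the inequality $c(G,\mu)\le I$ is the entire content of the argument and you do not prove it: you describe a programme (realize $\Xi_X$ via Fell's criterion, lower-bound $r_{\pf_q(G)}(\mu)$ by $\Xi_X$-weighted averages, control the witnessing vectors uniformly as $q\to1^+$) and then explicitly flag its hardest step as an unresolved obstacle. A plan whose central estimate is acknowledged but not established is not a proof.

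The paper's actual route avoids the pseudofunction algebras entirely and is worth knowing. Property RD gives $\ell^2(G,\om)\subseteq C^*_r(G)$ with $\om=(1+\fL)^d$, and weak containment $\pi_X\prec\lambda_G$ then forces the matrix coefficients of $\pi_X$ into $\ell^2(G,\om^{-1})$ (the characterization $RD(6)$ in \cite{Chatt 1}); in particular $\Xi_X^2(1+\fL)^{-2d}\in\ell^1(G)$. One then applies the Gibbs-inequality mechanism of Theorem \ref{T:Lyap Expo dominates entropy-inverse integrable weights}, i.e.~relation \eqref{Eq:claim 1}, with the summable function $\sg^{-2}=\Xi_X^2(1+\fL)^{-2d}$ playing the role of the unnormalized reference measure, to get
\begin{align*}
H(G,\mu^{*n})\ \le\ 2d\sum_{s\in G}\mu^{*n}(s)\log(1+\fL(s))\;-\;2\sum_{s\in G}\mu^{*n}(s)\log\Xi_X(s)\;+\;D
\end{align*}
for all $n$, with $D=\log\|\sg^{-2}\|_1$. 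Dividing by $n$ and using $\Lc_{1+\fL}(G,\mu)=0$ from Theorem \ref{T:vanishing Lyap expo-log of lenght function} --- which needs only the finite $\log(1+\fL)$-moment --- kills the polynomial term and yields $h(G,\mu)\le-2\liminf_n\frac1n\sum_s\mu^{*n}(s)\log\Xi_X(s)$ directly. Combined with your (correct) upper half this closes the squeeze and simultaneously establishes existence of the limit. If you want to salvage your route through $c(G,\mu)$, you would need both to strengthen the moment hypothesis and to actually carry out the spectral-radius lower bound; the paper's argument shows neither is necessary.
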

As a consequence, for groups with property RD, we can characterize (Zimmer) amenability of the action of $G$ on $(X,\xi)$.
\begin{thm}[Theorem \ref{T:charc amen action-RD groups}] \label{T:charc amen action-RD groups introduction}
Let $G$ be a countable group satisfying property RD with respect to a length function  $\fL$, let $\mu$ be a non-degenerate probability measure on $G$ with both finite entropy and finite $\log(1+\fL)$-moment, and let $(X,\xi)$ be a $(G,\mu)$-stationary space. Then the following are equivalent:
\begin{enumerate}[(i)]
    \item The space $(X,\xi)$ is a measure-preserving extension of the Poisson boundary of $(G,\mu)$.
    \item The action $G \curvearrowright (X,\xi)$ is amenable (in the sense of Zimmer).
    \item The representation $\pi_{X}$ is weakly contained in $\lambda_G$.
    \item $h_\mu(X,\xi)=h(G,\mu)$.
\end{enumerate}
\end{thm}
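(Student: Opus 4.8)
The plan is to prove the chain of equivalences by combining three ingredients: the classical theory of stationary actions (Furstenberg entropy and its relation to amenability), the theorem just stated (Theorem~\ref{T:Square integrable-inverse Harich Chandra function introduction}), and the characterization of Poisson-boundary extensions via vanishing of the entropy \emph{gap}. I would first recall that, for any $(G,\mu)$-stationary space $(X,\xi)$ with $\mu$ non-degenerate of finite entropy, one always has the entropy inequality $0 \le h_\mu(X,\xi) \le h(G,\mu)$, and that the Poisson boundary $(B,\nu)$ is the unique (up to isomorphism) $(G,\mu)$-boundary whose Furstenberg entropy attains the maximal value $h(G,\mu)$. A measure-preserving extension of a stationary space has the same Furstenberg entropy as the base (since the relative entropy contribution of a measure-preserving map is zero). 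These are standard facts from the work of Furstenberg--Glasner and Nevo--Zimmer, and with them the implication (i)$\Rightarrow$(iv) is immediate: if $(X,\xi)$ is a measure-preserving extension of $(B,\nu)$, then $h_\mu(X,\xi)=h_\mu(B,\nu)=h(G,\mu)$.

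Next I would establish (iv)$\Rightarrow$(iii), which is the part where property RD enters essentially. This is the converse direction to Theorem~\ref{T:Square integrable-inverse Harich Chandra function introduction}. I would argue as follows: the general inequality $h_\mu(X,\xi)\le c(G,\mu)$ type bounds developed earlier in the paper (in the spirit of Theorem~\ref{T:Avez entropy-p conv operators-I}), together with property RD which collapses $\pf_q$-spectral radii onto the reduced $C^*$-norm via the weighted $\ell^2$ estimate, should show that the Furstenberg entropy being maximal forces a spectral constraint on the Koopman representation $\pi_X$: namely that the relevant $\pf_q(G)$-norms (equivalently, after taking $p\to\infty$, the $C^*_r(G)$-norm) of $\mu$ govern the decay of the matrix coefficients of $\pi_X$. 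Concretely, if $h_\mu(X,\xi)=h(G,\mu)$, then the Shannon-type sum $-\tfrac1n\sum_s \mu^{*n}(s)\log\Xi_X(s)$ must equal $\tfrac12 h(G,\mu)$, and by property RD this pins the norm of $\lambda_{\pi_X}(\mu^{*n})^{1/n}$ to $r_{C^*_r(G)}(\mu)$ asymptotically; a standard weak-containment criterion (a representation is weakly contained in $\lambda_G$ iff the Hilbert space it generates is tempered, iff the associated operator norms are controlled by $C^*_r(G)$-norms) then yields $\pi_X \prec \lambda_G$. The implication (iii)$\Rightarrow$(iv) is exactly Theorem~\ref{T:Square integrable-inverse Harich Chandra function introduction}, so no new work is needed there.

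Finally I would close the cycle with (iii)$\Leftrightarrow$(ii) and (ii)$\Rightarrow$(i). The equivalence (ii)$\Leftrightarrow$(iii) is a known characterization of Zimmer amenability of a stationary action: by a theorem in the circle of ideas of Zimmer, Nevo, and more recently Bader--Shalom / Bearden--Kalantar, the action $G\curvearrowright(X,\xi)$ is amenable in the sense of Zimmer if and only if the Koopman representation $\pi_X$ (or its tensor-square, depending on conventions) is weakly contained in $\lambda_G$; I would cite this and, if a conventions mismatch arises, note that for stationary actions one works with $\pi_X$ on $L^2(X,\xi)$ and the two formulations agree. For (ii)$\Rightarrow$(i): a Zimmer-amenable stationary action is, by Zimmer's structure theorem for amenable actions combined with the identification of the maximal projective factor, a measure-preserving extension of a $(G,\mu)$-boundary; and a Zimmer-amenable action has maximal Furstenberg entropy among amenable extensions, forcing that boundary to be the Poisson boundary. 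Alternatively, (i) follows from (iv) directly: if $h_\mu(X,\xi)=h(G,\mu)$ then the map from $(X,\xi)$ to its maximal boundary factor is entropy-preserving onto a boundary of maximal entropy, hence onto $(B,\nu)$, and the fiber is measure-preserving because all the entropy is already accounted for by the boundary factor. Thus (iv)$\Rightarrow$(i) completes the cycle.

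The main obstacle I anticipate is the implication (iv)$\Rightarrow$(iii): translating the equality of entropies into weak containment of $\pi_X$ in $\lambda_G$. This requires carefully relating the Furstenberg entropy (a Radon--Nikodym/relative-entropy quantity) to the asymptotic spectral radius of $\mu$ acting via $\pi_X$, and then invoking property RD to convert a weighted-$\ell^2$ / $\pf_q$ estimate into an honest $C^*_r(G)$ bound — this is precisely where the hypothesis of finite $\log(1+\fL)$-moment is consumed, since one needs the Harish-Chandra-type function $\Xi_X$ to have controlled logarithmic growth so that the limit defining the entropy exists and matches. Everything else is an assembly of known results about stationary actions and Zimmer amenability, but this step is the genuine new content and the place where the hypotheses must be used with care.
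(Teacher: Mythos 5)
Your proposal orients the cycle of implications in the opposite direction from the one that actually works, and as a result it rests on the two arrows that are genuinely problematic. The paper closes the loop as (i) $\Rightarrow$ (ii) $\Rightarrow$ (iii) $\Rightarrow$ (iv) $\Rightarrow$ (i), where every step is either a citation or the main theorem: (i) $\Rightarrow$ (ii) because the Poisson boundary action is Zimmer-amenable and measure-preserving extensions of amenable actions are amenable (Adams--Elliott--Giordano); (ii) $\Rightarrow$ (iii) is Anantharaman-Delaroche; (iii) $\Rightarrow$ (iv) is Theorem \ref{T:Square integrable-inverse Harich Chandra function}; and (iv) $\Rightarrow$ (i) is Furstenberg--Glasner. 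You instead propose to prove (iv) $\Rightarrow$ (iii) and (iii) $\Rightarrow$ (ii) directly, and neither sketch holds up. For (iv) $\Rightarrow$ (iii): knowing that $-\frac{1}{n}\sum_s \mu^{*n}(s)\log \Xi_X(s)$ has the right asymptotics controls only the single diagonal matrix coefficient $\langle \pi_X(\cdot)1,1\rangle$ averaged along the random walk; weak containment of $\pi_X$ in $\lambda_G$ is a statement about \emph{all} matrix coefficients (equivalently, about $\|\pi_X(f)\|\le\|f\|_{C^*_r(G)}$ for all $f$), and nothing in your argument upgrades an asymptotic equality for one coefficient into that uniform bound. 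For (iii) $\Rightarrow$ (ii): weak containment of the Koopman representation in $\lambda_G$ is \emph{not} a known characterization of Zimmer amenability in general --- only the implication (ii) $\Rightarrow$ (iii) is standard --- and your hedge about tensor squares does not repair this; in this theorem the converse is a \emph{consequence} of the whole equivalence (obtained by going around the cycle), not an input to it.

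The pieces of your proposal that are sound --- (i) $\Leftrightarrow$ (iv) via the entropy of measure-preserving extensions and the maximality of the Poisson boundary, (iii) $\Rightarrow$ (iv) via the stated theorem, and (ii) $\Rightarrow$ (iii) via Anantharaman-Delaroche --- give you the chain (ii) $\Rightarrow$ (iii) $\Rightarrow$ (iv) $\Leftrightarrow$ (i), but this never re-enters (ii), so the equivalence is not established. The missing ingredient is the implication (i) $\Rightarrow$ (ii), which you do not mention: it follows from the amenability of the $G$-action on its Poisson boundary together with the fact that measure-preserving extensions of amenable actions are amenable. Adding that single citation, and deleting the direct attempts at (iv) $\Rightarrow$ (iii) and (iii) $\Rightarrow$ (ii), turns your outline into the paper's proof.
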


In particular, if $(X,\xi)$ is a $\mu$-boundary, then $\pi_X$ is weakly contained in $\lambda_G$ if and only if $(X,\xi)$ coincides with the Furstenberg-Poisson boundary of $(G,\mu)$ (see Corollary \ref{C:charc entopry-RD groups}).

We would like to point out the relation between Theorem \ref{T:charc amen action-RD groups introduction} and the existence of a unique Poisson boundary. For a non-degenerate probability measure $\mu$ on $G$, we say that $(G,\mu)$ has a \textbf{unique Poisson boundary realization} if the Poisson boundary of $(G,\mu)$ has a compact model realization with the unique $\mu$-stationary Borel probability measure (see \cite[Definition 3.9]{Hart-Kal 2}). Several natural examples of Poisson boundaries have this property, e.g.~free groups, hyperbolic groups, certain $\mathrm{CAT}(0)$-groups, small cancellation groups and mapping class groups. This is generally achieved through applying a powerful method developed by Kaimanovich in \cite{Kaim 1}, known today as \emph{Kaimanovich's strip criterion}, to give a concrete realization of the Poisson boundary and show its uniqueness; we refer to \cite[Section 3.2]{Hart-Kal 2} for further details. It is shown in \cite[Theorem 9.2]{nevosageev} that whenever $(G,\mu)$ has the unique Poisson boundary realization, every amenable $(G,\mu)$-space is a measure-preserving extension of the Poisson boundary of $(G,\mu)$. In particular, the equivalence of (i) and (ii)  in Theorem \ref{T:charc amen action-RD groups introduction} holds in these situations. Since \cite[Theorem 4.4]{Furs-Glas 1} implies the equivalence of (i) and (iv) in Theorem \ref{T:charc amen action-RD groups introduction} and Zimmer amenability implies the weak containment of the Koopman representation in the left-regular representation of $G$ \cite[Theorem 4.3.1]{Ana-Dela 1}, the crucial part in Theorem \ref{T:charc amen action-RD groups introduction} is to establish that (iii) implies (iv). To the best knowledge of the authors, this is new even for free groups. This is particularly interesting because there is a large overlap between groups with a unique Poisson boundary realization and groups with property RD. The latter property is used in a fundamental way to prove Theorem \ref{T:Square integrable-inverse Harich Chandra function introduction}, but it is not clear whether one could use the former property to obtain the same result.

In the final section (Section \ref{S:convolution entropy}), we introduce a new asymptotic quantity associated with a $\mu$-random walk. Instead of looking at the asymptotic behaviour of the spectral radius of $\mu$ in weighted $L^p$-spaces, we consider the spectral radius in the space $\mathrm{PF}_p(G)$ of $p$-pseudofunctions on $G$, which is defined as the norm closure in $B(\ell^p(G))$ of the image of the left-regular representation $\lambda_p : \ell^1(G) \to B(\ell^p(G))$. The crucial difference here comes from the fact that the space of $p$-pseudofunctions captures information about the group law directly, through the left-regular representation.
\begin{defn} \label{d:convolutionentropy}
Let $\mu$ be a probability measure on $G$ with finite Shannon entropy (see Section \ref{subsec:entropyrw} for the definition). The \textbf{convolution entropy} $c(G,\mu)$ of the $\mu$-random walk on $G$ is defined as
\begin{align*}
    c(G,\mu):=\displaystyle\lim_{p\rightarrow \infty}-p\log r_{\pf_q(G)}(\mu),
\end{align*}
where $q$ depends on $p$ through $\dfrac{1}{p}+\dfrac{1}{q}=1$.
\end{defn}
In Theorem \ref{T:Avez entropy-p conv operators-I}, we show that in general, $c(G,\mu)$ is dominated by the Avez entropy $h(G,\mu)$. On the other hand, as indicated before, the use of our framework becomes particularly appealing for groups with property RD, as we show that such groups, the convolution entropy coincides with the Avez entropy.

\begin{thm}[Theorem \ref{T:Avez entropy-p conv operators-RD}] \label{T:Avez entropy-p conv operators-RD intro}
Let $G$ be a countable group satisfying property RD with respect to a length function $\fL$, and let $\mu$ be a non-degenerate probability measure on $G$ with both finite entropy and finite $\alpha$-moment with respect to $1+\fL$ for some $\alpha>0$. Then the Avez entropy and the convolution entropy of the $\mu$-random walk coincide:
\begin{align*} 
 h(G,\mu)=c(G,\mu).
\end{align*}
\end{thm}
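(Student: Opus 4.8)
The plan is to prove $h(G,\mu)\le c(G,\mu)$; since Theorem~\ref{T:Avez entropy-p conv operators-I} gives the reverse inequality in general, this yields the assertion. Property RD will be used exactly once, through complex interpolation. Write $\om=(1+\fL)^d$ for the weight of Definition~\ref{D:RD}, so that the map $f\mapsto\lambda(f)$ is bounded from $\ell^2(G,\om)$ to $C^*_r(G)=\pf_2(G)$, while it identifies $\ell^1(G)$ isometrically with $\pf_1(G)$. Interpolating this map between the couples $(\ell^1(G),\ell^2(G,\om))$ and $(\pf_1(G),\pf_2(G))$ at $\theta=2/p$ --- using $[\ell^1(G),\ell^2(G,\om)]_\theta=\ell^q(G,(1+\fL)^{d\theta})$ with $\tfrac1q=1-\tfrac\theta2$ (i.e.\ $\tfrac1p+\tfrac1q=1$), together with the standard contractive inclusion $[\pf_1(G),\pf_2(G)]_\theta\hookrightarrow\pf_q(G)$ coming from $[B(\ell^1(G)),B(\ell^2(G))]_\theta\hookrightarrow B(\ell^q(G))$ --- I would obtain, for every $p\ge 2$, a bounded inclusion $\ell^q(G,(1+\fL)^{2d/p})\hookrightarrow\pf_q(G)$ of norm at most $C^{2/p}$, where $C$ is the RD constant. (For $p$ large, the finite $\alpha$-moment hypothesis guarantees $\mu^{*n}$ lies in this space.) Evaluating on $\mu^{*n}$, taking $n$-th roots and $\limsup_{n\to\infty}$ makes the constant disappear, giving
\begin{equation*}
  r_{\pf_q(G)}(\mu)\ \le\ \limsup_{n\to\infty}\ \|\mu^{*n}\|_{\ell^q(G,(1+\fL)^{2d/p})}^{1/n},
\end{equation*}
hence $-p\log r_{\pf_q(G)}(\mu)\ge -p\log\bigl(\limsup_n\|\mu^{*n}\|_{\ell^q(G,(1+\fL)^{2d/p})}^{1/n}\bigr)$ for all $p\ge 2$.

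The next step is to let $p\to\infty$ on the right and recognise the value. Here I would invoke the interpolation-based computation of the weighted Avez entropy of Definition~\ref{D:weighted Avez entropy} --- the same machinery underlying Theorems~\ref{T:Lyap expo-weighted L1} and \ref{T:Lyap Expo dominates entropy-inverse integrable weights} --- which identifies $\lim_{p\to\infty}\bigl(-p\log\,\limsup_n\|\mu^{*n}\|_{\ell^q(G,(1+\fL)^{2d/p})}^{1/n}\bigr)$ with the weighted Avez entropy $h_{(1+\fL)^{2d}}(G,\mu)$, which in turn differs from $h(G,\mu)$ precisely by the Lyapunov exponent $\Lc_{(1+\fL)^{2d}}(G,\mu)$. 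The concrete mechanism is a Hölder splitting of $\|\mu^{*n}\|_{\ell^q(G,(1+\fL)^{2d/p})}^q=\sum_{s}\mu^{*n}(s)^q(1+\fL(s))^{2d(q-1)}$, with exponents $(1-\delta)^{-1},\delta^{-1}$ and $\delta=\delta(q)\downarrow 0$ chosen so that $(q-1)/\delta\to 0$, into a factor $\bigl(\sum_s\mu^{*n}(s)^q\bigr)^{1-\delta}$ --- which after $-p\log(\cdot)$ and $p\to\infty$ tends to the Avez entropy --- times a factor involving only the $\fL$-moment of $W_n$ of the small exponent $2d(q-1)/\delta$: once this exponent is $\le\alpha$, the finite $\alpha$-moment hypothesis and the subadditivity of $\fL$ force this moment to be finite and to grow at most polynomially in $n$, so it contributes nothing to the exponential rate. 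Finally, since $\mu$ is non-degenerate with finite $\alpha$-moment with respect to $1+\fL$, Theorem~\ref{T:vanishing Lyap expo-log of length function} yields $\Lc_{(1+\fL)^{2d}}(G,\mu)=2d\,\Lc_{1+\fL}(G,\mu)=0$, so the limit equals $h(G,\mu)$. Combining, $c(G,\mu)=\lim_{p\to\infty}\bigl(-p\log r_{\pf_q(G)}(\mu)\bigr)\ge h(G,\mu)$, and with Theorem~\ref{T:Avez entropy-p conv operators-I} we conclude $h(G,\mu)=c(G,\mu)$.

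The main obstacle is the middle step: the interchange of the limits $n\to\infty$ and $p\to\infty$, i.e.\ showing that as $p\to\infty$ the (weighted) $\ell^q$-spectral-radius quantity sees exactly the Avez entropy modified by a Lyapunov exponent. This is where complex interpolation is indispensable and where the hypotheses on $\mu$ genuinely enter --- only \emph{low} $\fL$-moments of the walk occur, and these stay finite and subexponential in $n$ --- and it also rests on the prior identification, carried out while proving Theorem~\ref{T:Avez entropy-p conv operators-I}, of the trivial-weight version $\lim_{p\to\infty}\bigl(-p\log\,\limsup_n\|\mu^{*n}\|_{\ell^q(G)}^{1/n}\bigr)$ with $h(G,\mu)$.
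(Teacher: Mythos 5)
Your overall architecture matches the paper's: reduce to $h(G,\mu)\le c(G,\mu)$ via Theorem \ref{T:Avez entropy-p conv operators-I}, feed the RD inclusion $\ell^2(G,(1+\fL)^d)\to C^*_r(G)$ into complex interpolation to get $\ell^q(G,(1+\fL)^{2d/p})\hookrightarrow \pf_q(G)$ with norm $C^{2/p}$ (this is exactly the embedding the paper imports from \cite[Theorem 5.10]{SW-exotic}), and kill the Lyapunov correction with Theorem \ref{T:vanishing Lyap expo-log of lenght function}. The gap is in the middle step, and it is fatal as written. After your H\"older splitting, what you need is precisely the ``trivial-weight identification'' $\lim_{p\to\infty}\liminf_{n\to\infty}\bigl(-\tfrac{p}{n}\log\|\mu^{*n}\|_{\ell^q(G)}\bigr)=h(G,\mu)$, i.e.\ that the $\ell^q$-R\'enyi entropy rate converges to the Shannon entropy rate as $q\to 1^+$. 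This is \emph{not} established in the proof of Theorem \ref{T:Avez entropy-p conv operators-I}, which only yields the one-sided bound $c(G,\mu)\le h(G,\mu)$; the commutation of the two iterated limits appears in the paper only as a \emph{corollary} of the RD theorem you are trying to prove. Worse, the statement is false in general: for $G=(\Z/2\Z)\wr\Z^3$ with a symmetric, finitely supported, non-degenerate $\mu$, subexponential decay of return probabilities forces $\|\mu^{*n}\|_q^{1/n}\to 1$ for every $q\in(1,2]$, so your iterated limit is $0$ while $h(G,\mu)>0$. Your H\"older splitting compounds the problem rather than solving it: it bounds the weighted norm from above by the unweighted norm times a polynomially growing moment, i.e.\ it throws away the weight --- which is exactly the information property RD supplies --- and leaves you needing a claim that is false without RD and circular with it.

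The repair is to reverse the order of limits, which is what the paper does; then no interchange is ever needed. Keep $n$ fixed. From $r_{\pf_q(G)}(\mu)^n\le \|\mu^{*n}\|_{\pf_q(G)}\le C^{2/p}\|\mu^{*n}\|_{q,\om^{2/p}}$ with $\om=(1+\fL)^d$, you get $-pn\log r_{\pf_q(G)}(\mu)\ \ge\ -2\log C-p\log\|\mu^{*n}\|_{q,\om^{2/p}}$ for all $p\ge p_0$. The left-hand side is increasing in $p$ with limit $n\,c(G,\mu)$ (Theorem \ref{T:Avez entropy-p conv operators-I}); the right-hand side is increasing in $p$ with limit $-2\log C+H_{\om^2}(G,\mu^{*n})$ by Theorem \ref{T:compare weighted and non-weighted Shannon entropy} (whose part (ii) is proved, for each \emph{fixed} measure, by a l'H\^opital computation --- this is the correct substitute for your H\"older mechanism). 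Hence $H_{\om^2}(G,\mu^{*n})-2\log C\le n\,c(G,\mu)$ for every $n$; dividing by $n$, letting $n\to\infty$, and using $\Lc_{\om^2}(G,\mu)=2d\,\Lc_{1+\fL}(G,\mu)=0$ gives $h(G,\mu)=h_{\om^2}(G,\mu)\le c(G,\mu)$, as required.
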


\section*{Acknowledgements}
The third-named author is grateful to Vern Paulsen for a nice conversation and also for pointing out the similarity between the weighted Shannon entropy and the relative entropy/Kullback--Leibler divergence. This is summarized in Remark \ref{R:weighted Shannon entropy vs relative entropy}. We thank the referee for their careful reading and for several useful suggestions that significantly improved the exposition of the article. 

\section{Preliminaries}

\subsection{Entropy of random walks} \label{subsec:entropyrw}

Let $G$ be a countable group, let $e$ denote its identity element, and let $\mu$ be a probability measure on $G$. The measure $\mu$ is called symmetric if $\mu(s^{-1}) = \mu(s)$ for all $s \in G$ and non-degenerate if its support generates $G$ as a semigroup.

We first recall the notion of the $\mu$-random walk on $G$, following the conventions of \cite{Zheng}. Let $(Y_n)_{n \in \N}$ be a sequence of $G$-valued independent identically distributed random variables with law $\mu$. Set $W_0 := e$, and define $W_n := Y_1\ldots Y_n$. Then $W_n$ is distributed with law
\[
	\mu^{*n} =\underbrace{\mu \ast \mu \ast \ldots \ast \mu}_{n-\text{times}},
\]
where the convolution product $\mu \ast \nu$ of two probability measures $\mu$ and $\nu$ is given by $\mu \ast \nu (s)=\sum_{t \in G} \mu(t)\nu(t^{-1}s) = \sum_{t \in G} \mu(st^{-1})\nu(t)$. The random process $(W_n)_{n \in \N_0}$ is called the $\mu$\textbf{-random walk} on $G$ (with starting point $e$).

Equivalently, the $\mu$-random walk on $G$ can be viewed as the Markov chain with state space $G$, transition probabilities $p(s,t)=\mu(s^{-1}t)$, and initial distribution $\delta_e$. The space of paths of the $\mu$-random walk is the set $\mathcal{Z} = G^{\N}$ equipped with the Markovian measure $\mathbb{P}_{\mu}$ given by
\[
    \mathbb{P}_{\mu} \left( \{ z \in \mathcal{Z} : z_1=s_1, \, z_2=z_1s_2, \, \ldots, \, z_n=z_{n-1}s_n \} \right) = \mu(s_1) \ldots \mu(s_n)
\]
for $n \in \N$ and $s_1,\ldots,s_n \in G$. Note that $\mathbb{P}_{\mu}(\{ z \in \mathcal{Z} : z_n = s \})=\mu^{\ast n}(s)$.

The \textbf{Shannon entropy} of the random variable $W_n$ (which only depends on the law $\mu^{*n}$) is defined as
\[
	H(G,\mu^{*n}) = -\sum_{s\in G} \mu^{*n} (s) \, \log \mu^{*n} (s),
\]
where we use the convention that $\mu(s)\log \mu(s)=0$ whenever $s\notin \supp \, \mu$. If $H(G,\mu) < \infty$, then $H(G,\mu^{*n}) < \infty$ for all $n$. More precisely, the mapping
\begin{align*}
  \N \to [0,\infty), \; n \mapsto H(G,\mu^{*n})
\end{align*}
is subadditive on $\N$, so that the following limit exists:
\begin{align*}
h(G,\mu) := \lim_{n\rightarrow \infty} \dfrac{H(G,\mu^{\ast n})}{n} =\inf \left\{ \dfrac{H(G,\mu^{\ast n})}{n}:n\in \mathbb{N}\right\}.
\end{align*}
The limit $h(G,\mu)$ is called the \textbf{(Avez) entropy} of the $\mu$-random walk.

Let $\fL : G\to [0,\infty)$ be a subadditive function, i.e.~$\fL(st)\leq \fL(s)+\fL(t)$ for all $s,t\in G$, and $\alpha>0$. (Note that in particular length functions on groups are subadditive.) The measure $\mu$ is said to have \textbf{finite $\alpha$-moment} with respect to $\fL$ if $\sum_{s\in G} \mu(s)\fL(s)^\alpha<\infty$.
For $\alpha=1$, we say that $\mu$ has finite first moment with respect to $\fL$. In particular, if $G$ is finitely generated with finite symmetric generating set $S$ containing $e$ and associated word-length function $|\cdot|_S : G \to \N_0$, we say that $\mu$ has finite $\alpha$-moment (with respect to $|\cdot|_S$) if $\sum_{s} \mu(s)|s|^\alpha_S<\infty$.

The \textbf{speed} of the $\mu$-random walk on a finitely generated group with finite symmetric generating set $S$ containing $e$ is defined as
\[
  l(G,\mu)=\lim_{n\to \infty} \frac{L(G,\mu^{*n})}{n}, \ \text{where}\ \ L(G,\mu^{*n}):=\sum_{s\in G} \mu^{*n}(s) \, |s|_S.
\]
We suppress the dependency on the generating set in the notation of the speed, because it is usually clear which generating set is used. If $\mu$ has finite first moment, then the $\mu$-random walk has finite speed and the following inequality holds:
\begin{align}\label{Eq:Fund inequality}
  h(G,\mu) \leq v_S \, l(G,\mu),
\end{align}
where $v_S = \lim_{n\to \infty} \frac{1}{n} \log |S^{n}|$ is the logarithmic \textbf{volume growth} of $G$ with respect to $S$.
For background and details on entropy of random walks, we refer to \cite{Lyons-Peres,woess,Zheng}.

\subsection{Poisson boundary} \label{subsec:poissonboundary}

The \textbf{(Furstenberg--)Poisson boundary} of a $\mu$-random walk on $G$ is a probability space that reflects aspects of the asymptotic behavior of the random walk. This notion goes back to Furstenberg \cite{Furstenberg1963}. We only give one characterization of the Poisson boundary \cite[Theorem 2.11]{badershalom} (see also \cite[Theorem 2.2]{boutonnethoudayer}). Let $(X,\xi)$ be a $(G,\mu)$-stationary space (i.e.~$(X,\xi)$ is a standard probability space equipped with a measurable $G$-action such that $\mu \ast \xi = \xi$), let $H^{\infty}(G,\mu)$ denote the space of all bounded $\mu$-harmonic functions, and consider the map
\[
    T : L^{\infty}(X,\xi) \to H^{\infty}(G,\mu), \; f \mapsto Tf,
\]
where
\[
    Tf(s)=\int_X f(sx) d\xi(x).
\]
The Poisson boundary $(\Pi_{\mu},\nu_{\infty})$ is the unique (up to $G$-equivariant measure space isomorphism) $(G,\mu)$-stationary space $(X,\xi)$ for which the map $T$ is a $G$-equivariant isometric isomorphism.

For a thorough introduction and an overview of the Poisson boundary, including different characterizations, we refer to \cite[Section 2]{badershalom} and \cite{Furm1,Furstenberg1973,Kaim-Vers 1}.

\subsection{$L^p$-spaces and complex interpolation}\label{S:complex interpolation}

We assume that the reader is familiar with the basics of (complex) interpolation spaces, as covered in e.g.~\cite{BL}. This background is only explicitly used to prove Proposition \ref{prop:interpolationconsequence} in this section.

Let $(X,\nu)$ be a $\sg$-finite measure space, and let $\om : X \to (0,\infty)$ be a positive measurable function on $X$. The space $L^p(X,\om)$ is defined to be the $L^p$-space
\[
L^p(X,\omega) := L^p(X,\om^p \nu) = \{f : X \to \C \textrm{ measurable} : f\omega \in L^p(X,\nu)\}
\]
equipped with the norm
\[
    \|f\|_{p,\om} = \left( \int_X |f|^p \om^p d\nu \right)^{\frac{1}{p}}.
\]
By \cite[Theorem 5.5.3]{BL}, for $1\leq p_0 \leq  p_1 < \infty$, measurable functions $\om_0$ and $\om_1$ on $X$, and $\theta \in (0,1)$, we have the complex interpolation space
\begin{equation}\label{Eq:interpolation-weigh lp}
L^p(X,\omega)=(L^{p_0}(X,\omega_0),L^{p_1}(X,\omega_1))_\theta,
\end{equation}
where $p$ and $\om$ are given by
\begin{equation}\label{Eq:interpolation-weigh lp-relations}
\frac{1}{p}=\frac{1-\theta}{p_0}+\frac{\theta}{p_1}, \qquad \om=\om_0^{1-\theta} \om_1^{\theta}.
\end{equation}
Moreover, for every $f\in L^{p_0}(X,\omega_0)\cap L^{p_1}(X,\omega_1)$, we have
\begin{equation}\label{Eq:interpolation-weigh lp norm-relations}
\|f\|_{p,\omega} \leq \|f\|^{1-\theta}_{p_0,\omega_0}\|f\|^\theta_{p_1,\omega_1}.
\end{equation}
Similarly, by \cite[Theorems 5.5.3 and 5.1.2]{BL}, we have
\begin{equation*} 
(L^{p_0}(X,\omega_0),L^{\infty}(X,\nu))_\theta=L^p(X,\omega),
\end{equation*}
where
\begin{equation*} 
\frac{1}{p}=\frac{1-\theta}{p_0}, \qquad \om = \om_0^{1-\theta}.
\end{equation*}

The following proposition follows easily from \eqref{Eq:interpolation-weigh lp}, \eqref{Eq:interpolation-weigh lp-relations} and \eqref{Eq:interpolation-weigh lp norm-relations} and is used frequently throughout this article.

\begin{prop}  \label{prop:interpolationconsequence}
Let $(X,\nu)$ be a $\sg$-finite measure space, let $\omega$ be a positive measurable function on $X$, and let $1< q_0\leq p_0< \infty$ with $\dfrac{1}{p_0}+\dfrac{1}{q_0}=1$. Then for every
$f\in L^1(X,\nu)\cap L^{q_0}(X,\omega^{1/p_0})$
with $\Vert f\Vert_{1}=1$, the mapping
\begin{equation*}
[p_0,\infty) \rightarrow \R, \ \ p \mapsto -p \, \log \Vert f\Vert_{q,\omega^{1/p}},
\end{equation*}
where $1< q \leq q_0 \leq p_0 \leq p < \infty$ with $\dfrac{1}{p}+\dfrac{1}{q}=1$, is an increasing function. Equivalently, the mapping
\begin{equation*}
(1,q_0] \rightarrow \R, \ \ q \mapsto -p \, \log \Vert f\Vert_{q,\omega^{1/p}}
\end{equation*}
is a decreasing function.
\end{prop}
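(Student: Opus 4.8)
The plan is to obtain the whole statement from a single use of the interpolation inequality \eqref{Eq:interpolation-weigh lp norm-relations}. It suffices to prove that $p \mapsto -p\log\|f\|_{q,\om^{1/p}}$ is increasing on $[p_0,\infty)$; the ``decreasing in $q$'' version then follows at once, since $p\mapsto q$ is an order-reversing bijection of $[p_0,\infty)$ onto $(1,q_0]$. So fix $p_0\le p_1\le p_2<\infty$, and write $q_i$ for the conjugate exponent of $p_i$, so that $q_0\ge q_1\ge q_2>1$. The goal is $-p_1\log\|f\|_{q_1,\om^{1/p_1}}\le -p_2\log\|f\|_{q_2,\om^{1/p_2}}$. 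If $p_1=p_2$ there is nothing to prove, so assume $p_1<p_2$ and set $\theta:=p_1/p_2\in(0,1)$.

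First I would record that $f$ lies, with finite and nonzero norm, in every space $L^{q_1}(X,\om^{1/p_1})$ with $p_1\ge p_0$. If $p_1=p_0$ this is the hypothesis; otherwise, apply \eqref{Eq:interpolation-weigh lp}--\eqref{Eq:interpolation-weigh lp-relations} with endpoint spaces $L^1(X,\nu)$ (weight constantly $1$) and $L^{q_0}(X,\om^{1/p_0})$ and parameter $\theta'=p_0/p_1\in(0,1)$: the weight relation gives $(\om^{1/p_0})^{\theta'}=\om^{\theta'/p_0}=\om^{1/p_1}$, and the exponent relation $\tfrac{1}{p}=(1-\theta')+\tfrac{\theta'}{q_0}$ simplifies, using $\tfrac{1}{q_0}=1-\tfrac{1}{p_0}$, to $\tfrac{1}{p}=1-\tfrac{1}{p_1}=\tfrac{1}{q_1}$, so that $L^{q_1}(X,\om^{1/p_1})=\big(L^1(X,\nu),L^{q_0}(X,\om^{1/p_0})\big)_{\theta'}$. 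Since an interpolation space contains the intersection of its endpoints, $f\in L^1(X,\nu)\cap L^{q_0}(X,\om^{1/p_0})\subseteq L^{q_1}(X,\om^{1/p_1})$, so $\|f\|_{q_1,\om^{1/p_1}}<\infty$; and since $\om>0$ everywhere while $\|f\|_1=1\neq 0$, this norm is nonzero. In particular the map in the statement is a well-defined real-valued function.

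Next I would identify $L^{q_2}(X,\om^{1/p_2})$ as an interpolation space between $L^1(X,\nu)$ and $L^{q_1}(X,\om^{1/p_1})$: the same computation, now with $q_1$ in place of $q_0$ and $\theta=p_1/p_2$ in place of $\theta'$, gives the weight identity $(\om^{1/p_1})^{\theta}=\om^{1/p_2}$ and the exponent identity $\tfrac{1}{p}=1-\tfrac{1}{p_2}=\tfrac{1}{q_2}$, whence $L^{q_2}(X,\om^{1/p_2})=\big(L^1(X,\nu),L^{q_1}(X,\om^{1/p_1})\big)_{\theta}$. Applying \eqref{Eq:interpolation-weigh lp norm-relations} to $f$, which lies in both endpoint spaces by the previous step, and using $\|f\|_1=1$, we get $\|f\|_{q_2,\om^{1/p_2}}\le\|f\|_1^{1-\theta}\|f\|_{q_1,\om^{1/p_1}}^{\theta}=\|f\|_{q_1,\om^{1/p_1}}^{\,p_1/p_2}$. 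Taking logarithms yields $\log\|f\|_{q_2,\om^{1/p_2}}\le\tfrac{p_1}{p_2}\log\|f\|_{q_1,\om^{1/p_1}}$, and multiplying through by $-p_2<0$ reverses the inequality to give exactly $-p_1\log\|f\|_{q_1,\om^{1/p_1}}\le -p_2\log\|f\|_{q_2,\om^{1/p_2}}$, which is the claimed monotonicity.

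I do not foresee a genuine obstacle: the argument is short, and the only points demanding care are bookkeeping ones. One should check that the exponent relation forced by \eqref{Eq:interpolation-weigh lp-relations} is automatically the conjugate relation $\tfrac{1}{p}+\tfrac{1}{q}=1$ — this is precisely what makes the exponents and weights fit together with no extra assumption — and one should note that the standing hypothesis $f\in L^{q_0}(X,\om^{1/p_0})$ is exactly what places $f$ in all the intermediate spaces $L^{q_1}(X,\om^{1/p_1})$, so that \eqref{Eq:interpolation-weigh lp norm-relations} may be applied to $f$ itself rather than to a merely dense subclass. Everything else is the multiplicativity of the logarithm.
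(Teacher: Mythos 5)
Your proposal is correct and follows essentially the same route as the paper: both identify $L^{q_2}(X,\om^{1/p_2})$ as the complex interpolation space between $L^1(X,\nu)$ and $L^{q_1}(X,\om^{1/p_1})$ with parameter $\theta=p_1/p_2$, apply the norm inequality \eqref{Eq:interpolation-weigh lp norm-relations} together with $\|f\|_1=1$, and take logarithms. Your additional verification that $f$ lies (with finite, nonzero norm) in every intermediate space is a harmless elaboration of a point the paper leaves implicit.
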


\begin{proof}
Note that $q_0 \leq 2 \leq p_0$, and let $1 < q < r \leq q_0$ and $p_0 \leq u < p < \infty$ be such that
$$\dfrac{1}{p}+ \dfrac{1}{q}=1 \quad \text{and}\quad \dfrac{1}{u}+ \dfrac{1}{r}=1.$$
Let $\theta \in (0,1)$ be such that
$$\dfrac{1}{q}=1-\theta +\dfrac{\theta}{r}.$$ Equivalently, $\theta =\dfrac{u}{p}$.

Therefore, if we put $\omega_0=1$ and $\omega_1=\omega^{1/u}$, by \eqref{Eq:interpolation-weigh lp}, \eqref{Eq:interpolation-weigh lp-relations} and \eqref{Eq:interpolation-weigh lp norm-relations}, we obtain
$$\Vert f\Vert_{q,\omega^{1/p}} \leq \Vert f\Vert_{1}^{1-\theta} \Vert f\Vert_{r,\omega^{1/u}}^{\theta} = \Vert f\Vert_{r,\omega^{1/u}}^{\frac{u}{p}}.$$
Hence,
$$\Vert f\Vert_{q,\omega^{1/p}}^p \leq \Vert f\Vert_{r,\omega^{1/u}}^{u},$$
so that
$$-u \, \log \Vert f\Vert_{r,\omega^{1/u}} \leq -p \, \log \Vert f\Vert_{q,\omega^{1/p}}.$$
This completes the proof.
\end{proof}

We also use the following result on interpolation of $n$-linear maps; see e.g.~{\cite[Section 10.1]{Cal}}.
\begin{prop} \label{T:n linear-interpol}
	Let $(\fA_i,\fB_i)$, with $i=1,\ldots, n+1$, be interpolation pairs, and suppose that
	$$T_\fA : \fA_1\times \ldots \times \fA_n \to \fA_{n+1} \quad \textrm{and} \quad T_\fB : \fB_1\times \ldots \times \fB_n \to \fB_{n+1}$$
	are bounded $n$-linear maps that coincide on $(\fA_1\cap\fB_1)\times \ldots \times (\fA_n\cap \fB_n)$.
Let
$$T : (\fA_1\cap\fB_1)\times \ldots \times (\fA_n\cap \fB_n)\to \fA_{n+1}\cap \fB_{n+1}$$ be the restriction of $T_\fA$ (or $T_\fB$) to $(\fA_1\cap \fB_1)\times \ldots \times (\fA_n\cap \fB_n)$. Fix $\theta\in (0,1)$, and let $\fC_{i}=(\fA_i,\fB_i)_\theta$, $i=1,\ldots,n+1$, be the interpolation of $\fA_i$ and $\fB_i$ with parameter $\theta$. Then $T$ extends to a bounded $n$-linear map
	\begin{equation*} 
	T_\fC : \fC_1\times \ldots \times \fC_n \to \fC_{n+1}
	\end{equation*}
	such that
	\begin{equation*} 
	\|T_\fC\|\leq \|T_\fA\|^{1-\theta}\|T_\fB\|^\theta.
	\end{equation*}
\end{prop}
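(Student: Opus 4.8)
The plan is to run the standard machinery of Calder\'on's complex interpolation method. Recall that $(\fA,\fB)_\theta = \{f(\theta) : f \in \F(\fA,\fB)\}$ with the quotient norm, where $\F(\fA,\fB)$ is the Banach space of functions on the closed strip $\overline{S} = \{z \in \C : 0 \le \mathrm{Re}\, z \le 1\}$ that are continuous and bounded into $\fA + \fB$, analytic on the open strip, and whose restrictions to the edges $\mathrm{Re}\, z = 0$ and $\mathrm{Re}\, z = 1$ are continuous and bounded into $\fA$ and $\fB$ respectively, normed by $\|f\|_{\F(\fA,\fB)} = \max\{\sup_t \|f(it)\|_\fA,\ \sup_t \|f(1+it)\|_\fB\}$ (see \cite{BL}). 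First I would reduce the proposition to the norm estimate
\[
\|T(a_1,\ldots,a_n)\|_{\fC_{n+1}} \le \|T_\fA\|^{1-\theta}\|T_\fB\|^{\theta}\prod_{i=1}^n \|a_i\|_{\fC_i} \qquad \text{for } a_i \in \fA_i \cap \fB_i.
\]
If $\|T_\fA\| = 0$ or $\|T_\fB\| = 0$, then $T \equiv 0$ and there is nothing to prove, so I assume both are positive. Granting this estimate, since $\fA_i \cap \fB_i$ is dense in $\fC_i$, a routine telescoping argument --- writing $T(a^{(k)}) - T(a^{(\ell)})$ as a sum of $n$ terms in each of which one coordinate is $a_j^{(k)} - a_j^{(\ell)}$ and invoking $n$-linearity together with the estimate --- shows that $(T(a_1^{(k)},\ldots,a_n^{(k)}))_k$ is Cauchy in $\fC_{n+1}$ whenever $a_i^{(k)} \to x_i$ in $\fC_i$ with $a_i^{(k)} \in \fA_i \cap \fB_i$, that its limit depends only on $(x_1,\ldots,x_n)$, and that declaring $T_\fC(x_1,\ldots,x_n)$ to be this limit defines a bounded $n$-linear extension of $T$ with $\|T_\fC\| \le \|T_\fA\|^{1-\theta}\|T_\fB\|^{\theta}$; uniqueness is immediate from density and continuity.

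For the estimate, fix $\varepsilon > 0$. The essential input is that the norm of $(\fA_i,\fB_i)_\theta$ is, on the dense subspace $\fA_i \cap \fB_i$, already computed by functions in $\F(\fA_i,\fB_i)$ that take values in $\fA_i \cap \fB_i$ for every $z \in \overline{S}$; this is the density in $\F(\fA_i,\fB_i)$ of the exponential-type functions $z \mapsto e^{\delta z^2}\sum_j e^{\lambda_j z} a_{ij}$ with $\delta > 0$, $\lambda_j \in \R$ and $a_{ij} \in \fA_i \cap \fB_i$ (see \cite{BL} and \cite[Section 10.1]{Cal}). So I would pick, for each $i$, such an $f_i \colon \overline{S} \to \fA_i \cap \fB_i$ with $f_i(\theta) = a_i$ and $\|f_i\|_{\F(\fA_i,\fB_i)} \le \|a_i\|_{\fC_i} + \varepsilon$, and set $F(z) = T(f_1(z),\ldots,f_n(z))$, which makes sense for every $z \in \overline{S}$ precisely because each $f_i(z)$ lies in $\fA_i \cap \fB_i$, where $T$ agrees with both $T_\fA$ and $T_\fB$. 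Since bounded $n$-linear maps are analytic and each $f_i$ is analytic and bounded into $\fA_i$ and into $\fB_i$, the function $F$ is analytic on the open strip and continuous and bounded into $\fA_{n+1} \cap \fB_{n+1} \subset \fA_{n+1} + \fB_{n+1}$; moreover $F(it) = T_\fA(f_1(it),\ldots,f_n(it)) \in \fA_{n+1}$ with $\|F(it)\|_{\fA_{n+1}} \le \|T_\fA\|\prod_i(\|a_i\|_{\fC_i}+\varepsilon)$, and likewise $\|F(1+it)\|_{\fB_{n+1}} \le \|T_\fB\|\prod_i(\|a_i\|_{\fC_i}+\varepsilon)$. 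Hence $F \in \F(\fA_{n+1},\fB_{n+1})$.

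To turn the maximum of these two edge bounds into a geometric mean, I would replace $F$ by $G(z) = e^{(z-\theta)\gamma}F(z)$ with the real constant $\gamma = \log(\|T_\fA\|/\|T_\fB\|)$; then $G \in \F(\fA_{n+1},\fB_{n+1})$ and $G(\theta) = F(\theta) = T(a_1,\ldots,a_n)$, while on the edges the scalar factor has modulus $e^{-\theta\gamma}$ and $e^{(1-\theta)\gamma}$ respectively, so that both $\sup_t \|G(it)\|_{\fA_{n+1}}$ and $\sup_t \|G(1+it)\|_{\fB_{n+1}}$ are bounded by $\|T_\fA\|^{1-\theta}\|T_\fB\|^{\theta}\prod_i(\|a_i\|_{\fC_i}+\varepsilon)$. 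Therefore $\|T(a_1,\ldots,a_n)\|_{\fC_{n+1}} \le \|G\|_{\F(\fA_{n+1},\fB_{n+1})} \le \|T_\fA\|^{1-\theta}\|T_\fB\|^{\theta}\prod_i(\|a_i\|_{\fC_i}+\varepsilon)$, and letting $\varepsilon \to 0$ proves the estimate.

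The step I expect to require the most care is realizing the interpolation norm on $\fA_i \cap \fB_i$ by test functions that stay inside $\fA_i \cap \fB_i$ on the whole strip: without this one cannot even form $F(z) = T(f_1(z),\ldots,f_n(z))$ on the open strip, since $T$ is defined only on the intersections whereas a generic element of $\F(\fA_i,\fB_i)$ takes values merely in $\fA_i + \fB_i$ off the boundary. This is precisely where the density of the exponential-type functions enters, a standard but slightly delicate fact going back to Calder\'on; by comparison, the $e^{(z-\theta)\gamma}$ balancing trick and the final density-and-multilinearity extension are routine.
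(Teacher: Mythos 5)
Your argument is correct and is precisely the standard Calder\'on construction that the paper itself does not reproduce but simply cites (\cite[Section 10.1]{Cal}): reduce to the estimate on $(\fA_1\cap\fB_1)\times\ldots\times(\fA_n\cap\fB_n)$, realize the interpolation norms by exponential-type test functions valued in the intersections so that $F(z)=T(f_1(z),\ldots,f_n(z))$ is defined on the whole strip, balance the two edge bounds with the factor $e^{(z-\theta)\gamma}$, and extend by density and multilinearity. You also correctly isolate the one genuinely delicate ingredient, namely that the norm of $(\fA_i,\fB_i)_\theta$ on $\fA_i\cap\fB_i$ is already computed by functions in the dense subclass taking values in $\fA_i\cap\fB_i$ throughout $\overline{S}$, which is exactly the point handled in \cite{Cal} and \cite{BL}.
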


\subsection{Weighted group algebras and $\ell^p$-spaces} \label{subsec:weightedgroupalgebras}

\begin{defn}\label{D:weight}
A \textbf{weight} on a countable group $G$ is a function $\omega : G \rightarrow [a,\infty)$, with $a>0$, that is weakly submultiplicative, i.e.~there exists $C>0$ such that for all $s,t\in G$,
\begin{align}\label{Eq:weight relations}
\om(st)\leq C \om(s)\om(t).
\end{align}
Equivalently, $\om$ is a weight on $G$ if $\log \om$ is weakly subadditive, i.e.~there exists $D \in \R$ such that for every $s,t\in G$,
\begin{equation*}
\log \om(st)\leq \log \om(s) + \log \om(t) + D.
\end{equation*}
Two weights $\om$ and $\om'$ on $G$ are said to be equivalent if there are $c,c'>0$ such that
$$c\,\om \leq \tilde{\om} \leq c'\,\om.$$
\end{defn}

\begin{rem}\label{R:growth of weights}
\
\begin{enumerate}[(i)]
    \item Let $\om$ be a weight on $G$, and let $C>0$ be such that \eqref{Eq:weight relations} holds.
    By multiplying with $C$, we see that $\om$ is equivalent to the weight $C\om$, which is submultiplicative. Also, we can always rescale a weight to an equivalent weight with values in $[1,\infty)$.
    \item If $G$ is finitely generated with finite symmetric generating set $S$ containing $e$ and $\om$ is a weight on $G$, then the function
\begin{align*}
  n \mapsto \sup_{s\in S^n} \om(s)=\sup_{s_1,s_2,\ldots,s_n \in S} \om(s_1 \ldots s_n)
\end{align*}
is weakly submultiplicative, so that the limit
\begin{align*}
\om_S:=\lim_{n\to \infty}  \sup_{s\in S^n} \om(s)^{1/n}
\end{align*}
exists; this limit $\om_S$ is called the {\textbf{growth rate}} of $\om$. Equivalent weights have the same growth rate.
\end{enumerate}
\end{rem}

The following type of weight is of pivotal importance in this article.

First, recall that a length function on a group $G$ is a function $\mathcal{L} : G \to [0,\infty)$ satisfying $\mathcal{L}(e)=0$, $\mathcal{L}(s^{-1})=\mathcal{L}(s)$ and $\mathcal{L}(st) \leq \mathcal{L}(s) + \mathcal{L}(t)$ for all $s,t \in G$.
\begin{defn}\label{D:polynomial weights}
Let $\fL : G\to [0,\infty)$ be a length function on $G$ (see Section \ref{subsec: groups with RD} for the definition). For $d \geq 1$, the function
\[
    \mathcal{P}_{\fL}^{d} : s \mapsto (1+\fL(s))^d
\]
defines a weight on $G$; it is called the \textbf{polynomial weight} of degree $d$ with respect to $\fL$.
\end{defn}

For a weight $\om$ on $G$, the space
$$\ell^1(G,\om):=\{f : f\om\in \ell^1(G)\},$$
together with the norm $\|f\|_{1,\om}:=\sum_{s\in G} |f(s)| \, \om(s)$,
becomes a Banach algebra with respect to pointwise addition and convolution; it is called the \textbf{weighted $\ell^1$-algebra} (also known as the \textbf{Beurling algebra}) on $G$ with respect to the weight $\om$ (see \cite[Section 3.7]{RS}). For every $p\geq 1$, we define
\begin{align} \label{eq:omegap}
  \om_p(s)=\om(s)^{\frac{1}{p}}, \qquad s \in G.
\end{align}
Then $\om_p$ is also a weight on $G$. Moreover, for every $1\leq u\leq p$, we have the following complex interpolation relations (see Section \ref{S:complex interpolation}):
\begin{align}\label{Eq:complex interpolation-weight L1}
  \ell^1(G,\om_p)=(\ell^1(G),\ell^1(G,\om_u))_\theta, \;\; \text{with} \;\; \theta=\frac{u}{p},
\end{align}
and
\begin{align}\label{Eq:complex interpolation-weight L1-norm relations}
  \|f\|_{1,\om_p}\leq \|f\|_1^{1-\theta}\|f\|_{1,\om_u}^\theta \;\; \textrm{for all} \;\; f\in \ell^1(G)\cap \ell^1(G,\om_u).
\end{align}

Moreover, for the weighted $\ell^q$-spaces associated with a weight on the group $G$, we have the following: Suppose that $1 < q_{1}<q_0 < \infty $ and that $p_0$ and $p_1$ are the conjugates of $q_0$ and $q_1$, respectively. Then, by putting $\omega_0=1$ and $\omega_{p_0}=\omega^{1/p_0}$ (as in the equation \eqref{eq:omegap}), we have $\om_{p_1}=\om_{p_0}^\theta$, where $\theta=\frac{p_0}{p_1}$. Thus, by applying equations \eqref{Eq:interpolation-weigh lp}, \eqref{Eq:interpolation-weigh lp-relations}, and \eqref{Eq:interpolation-weigh lp norm-relations}, we have the following complex interpolation relations (see Section \ref{S:complex interpolation}):
\begin{align}\label{Eq:complex interpolation-weight Lq}
  \ell^{q_1}(G,\om_{p_1})=(\ell^1(G),\ell^{q_0}(G,\om_{p_0}))_\theta, \quad \text{with}\ \theta=\frac{p_0}{p_1},
\end{align}
and
\begin{align}\label{Eq:complex interpolation-weight Lq-norm relations}
  \|f\|_{q_1,\om_{p_1}}\leq \|f\|_1^{1-\theta}\|f\|_{q_0,\om_{p_0}}^\theta, \quad f\in \ell^1(G)\cap \ell^{q_0}(G,\om_{p_0}).
\end{align}

\subsection{Groups with rapid decay}\label{subsec: groups with RD}

\begin{defn} \label{D:RD}
    A countable group $G$ has the \textbf{property of rapid decay} (also called \textbf{property RD}) with respect to a length function (see Section \ref{subsec:weightedgroupalgebras} for the definition) $\mathcal{L}$ if there exists $d \in \N$ such that the canonical inclusion $\C[G] \hookrightarrow C^*_{r}(G)$ extends to a bounded linear map $\ell^2(G,\om) \to C^*_{r}(G)$, where $C^*_{r}(G)$ is the reduced $C^*$-algebra of $G$ and $\om$ is the weight on $G$ given by $\om(s)=(1+\mathcal{L}(s))^d$.
\end{defn}
For finitely generated groups, it is natural to consider the word length in Definition \ref{D:RD}. Since any two length functions on a finitely generated group are equivalent, it moreover suffices to consider the word length and it makes sense to speak about property RD without specifying a length function; see \cite[Section 2]{Chatt 1}.

The property of rapid decay was first established for the free groups $\mathbb{F}_n$, for $n \geq 2$, in the seminal work of Haagerup \cite{Haagerup}, and its systematic study goes back to \cite{Joli}, in which the property was proved for groups with polynomial growth and for classical hyperbolic groups. Nowadays, many more classes of groups with property RD, mostly of geometric nature, are known, and we refer to \cite{Chatt 1,Garncarek} for overviews. Unlike several other approximation properties for infinite groups, such as amenability, the Haagerup property, and weak amenability, property RD is a phenomenon that occurs both in lower (Lie theoretic) rank, i.e.~rank $\leq 1$, and in higher rank, i.e.~rank $\geq 2$. The examples mentioned so far are either amenable or rank $1$. However, it was proved by Ramagge, Robertson and Steger that groups acting properly and cocompactly on $\widetilde{A}_2$-buildings (e.g.~torsion-free lattices in $\mathrm{PGL}_3(\mathbb{Q}_p)$), which are of rank $2$, satisfy property RD. Subsequently, Lafforgue proved that cocompact lattices in $\mathrm{SL}(3,\mathbb{R})$ and $\mathrm{SL}(3,\mathbb{C})$ have property RD \cite{Lafforgue}, which was extended to cocompact lattices in $\mathrm{SL}(3,\mathbb{H})$ and in the exceptional group $\mathrm{E}_{6(-26)}$ in \cite{Chatterji2}. A deep conjecture due to Valette states that cocompact lattices in arbitrary semisimple groups have property RD \cite[Conjecture 7]{Valette}. Cocompactness cannot be dropped in this conjecture, since it is known to fail for non-cocompact lattices, as follows from an important result of Lubotzky, Mozes and Raghunathan \cite{LMR}. (The fact that $\mathrm{SL}(n,\mathbb{Z})$, with $n \geq 3$, does not have property RD was already shown in \cite{Joli}.)

Property RD has led to striking applications, most notably in the direction of the Baum-Connes conjecture. We refer to \cite{Chatt 1} for an overview, including several equivalent definitions of the property.

\section{Lyapunov exponent of random walks}

As before, $G$ denotes a countable discrete group.

\begin{defn}\label{D:Lyap expo}
Let $\om$ be a weight on $G$. For every probability measure $\mu$ with finite first $\log \om$-moment (that is, $\sum_{s \in G} \mu(s) \log \om (s) < \infty$), the \textbf{Lyapunov exponent} of $\mu$ with respect to $\om$ is defined as
\begin{align*}
\Lc_\om(G,\mu):=\displaystyle\lim_{n\rightarrow \infty}\dfrac{1}{n}\sum_{s\in G}\mu^{\ast n}(s) \log \omega (s).
\end{align*}
\end{defn}

To the knowledge of the authors, the explicit definition of this version of the Lyapunov exponent for random walks on groups (in this setting and generality) goes back to the work of Nevo \cite{Nevo1}. In fact, Nevo's definition is slightly more general than ours and covers random walks on non-discrete groups as well. Nevo attributes this notion of Lyapunov exponent to Margulis \cite[pp.~189-190]{Marg1}, who uses it implicitly and who in turn refers to \cite{G1,Ledrappier1982} for other uses of this quantity.

The following lemma shows that the above definition makes sense. The argument is straightforward and given for completeness.

\begin{lem} \label{L:subadditive-mu log omega}
Let $\om$ be a weight on $G$, and let $\mu$ be a probability measure on $G$ with finite first $\log \om$-moment. Then
$$\displaystyle\lim_{n\rightarrow \infty}\dfrac{1}{n}\sum_{s\in G}\mu^{\ast n}(s) \log \omega (s)$$
exists.
\end{lem}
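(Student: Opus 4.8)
The plan is to show that the sequence $a_n := \sum_{s\in G}\mu^{\ast n}(s)\log\om(s)$ is, up to an additive constant, subadditive, and then invoke Fekete's subadditive lemma. First I would record that since $\log\om$ is weakly subadditive, there is a constant $D\in\R$ such that $\log\om(st)\le\log\om(s)+\log\om(t)+D$ for all $s,t\in G$; after replacing $\log\om$ by $\log\om+D$ (equivalently, replacing $\om$ by $C\om$ as in Remark \ref{R:growth of weights}(i)), I may assume $\log\om$ is genuinely subadditive. This does not affect the existence of the limit in the statement, only its value by the additive constant $D$, which I would note; alternatively I carry the $D$ through the estimates. I would also observe at the outset that finiteness of the first $\log\om$-moment of $\mu$ forces finiteness of the first $\log\om$-moment of $\mu^{\ast n}$ for every $n$: indeed $\mu^{\ast n}=\mu\ast\mu^{\ast(n-1)}$, so by subadditivity $\sum_s\mu^{\ast n}(s)\log\om(s)\le\sum_{t,r}\mu(t)\mu^{\ast(n-1)}(r)\big(\log\om(t)+\log\om(r)+D\big)$, which is finite by induction; one should also check boundedness below, but since $\om$ takes values in $[a,\infty)$ with $a>0$, we may rescale so that $\om\ge 1$ and hence $\log\om\ge 0$, so each $a_n\ge 0$.

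The key computation is the near-subadditivity of $(a_n)$. For $m,n\in\N$ I would write, using $\mu^{\ast(m+n)}=\mu^{\ast m}\ast\mu^{\ast n}$,
\begin{align*}
a_{m+n}=\sum_{s\in G}\Big(\sum_{t\in G}\mu^{\ast m}(t)\,\mu^{\ast n}(t^{-1}s)\Big)\log\om(s)
=\sum_{t,r\in G}\mu^{\ast m}(t)\,\mu^{\ast n}(r)\,\log\om(tr),
\end{align*}
where in the last step I substituted $s=tr$ and used that for fixed $t$ the map $r\mapsto t^{-1}s$ with $s=tr$ is a bijection of $G$. Now apply subadditivity $\log\om(tr)\le\log\om(t)+\log\om(r)$ (or $+D$ if I keep the constant) termwise; since all the summands $\mu^{\ast m}(t)\mu^{\ast n}(r)\log\om(\cdot)$ are nonnegative, Tonelli's theorem lets me split the double sum and use that $\mu^{\ast m}$ and $\mu^{\ast n}$ are probability measures:
\begin{align*}
a_{m+n}\le\sum_{t,r}\mu^{\ast m}(t)\mu^{\ast n}(r)\big(\log\om(t)+\log\om(r)\big)
=\sum_t\mu^{\ast m}(t)\log\om(t)+\sum_r\mu^{\ast n}(r)\log\om(r)=a_m+a_n.
\end{align*}
Thus $(a_n)_{n\in\N}$ is a subadditive sequence of nonnegative reals, so by Fekete's lemma $\lim_{n\to\infty}a_n/n$ exists and equals $\inf_{n}a_n/n$; this is exactly the assertion. (If the constant $D$ is retained, then $b_n:=a_n+D$ satisfies $b_{m+n}\le b_m+b_n$, and $\lim a_n/n=\lim b_n/n$ exists just the same.)

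The only mild subtlety — and the place where care is needed rather than any real obstacle — is ensuring all the manipulations are legitimate when the sums are over an infinite group. This is handled by first reducing to $\om\ge 1$ (hence $\log\om\ge 0$), so that every series in sight has nonnegative terms and Tonelli's theorem applies without integrability hypotheses; the finiteness of $a_n$ for each $n$, established by the inductive estimate above, then guarantees the subadditive sequence is real-valued (not $+\infty$), which is what Fekete's lemma requires. So I do not anticipate a genuine obstacle here; the lemma is exactly the standard Fekete argument once one has verified finiteness and the termwise subadditivity of $\log\om$ under convolution.
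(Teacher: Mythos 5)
Your proof is correct and follows essentially the same route as the paper: reduce to a (sub)multiplicative weight, show $n\mapsto\sum_s\mu^{\ast n}(s)\log\om(s)$ is subadditive via the convolution identity, and apply Fekete's lemma, with the observation that the rescaling does not affect the limit. You supply more detail than the paper's sketch (the Tonelli justification, the inductive finiteness check, and carrying the constant $D$), but there is no substantive difference in approach; the only tiny slip is your mid-proof remark that the reduction changes the value of the limit by $D$ — it does not, since $D/n\to 0$, as your own final parenthetical correctly records.
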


\begin{proof}[Sketch of proof]
For a probability measure $\mu$ on $G$, set $S(\mu):=\displaystyle\sum_{s\in G}\mu(s) \log \omega (s)$. First, suppose that $\om$ is submultiplicative. A standard computation, applying properties of the logarithm and the submultiplicativity of $\om$, shows that for every two probability measures $\mu$ and $\nu$ on $G$ with finite $\log \om$-moment, we have
$S(\mu\ast \nu) \leq S(\mu)+S(\nu)$.
Hence, the mapping $n\mapsto S(\mu^{\ast n})$ is subadditive, so that the limit
$$\displaystyle\lim_{n\rightarrow \infty}\dfrac{1}{n} \, S(\mu^{\ast n})=\lim_{n\rightarrow \infty}\dfrac{1}{n}\sum_{s\in G}\mu^{\ast n}(s) \log \omega (s)$$
exists. The lemma now follows, since every weight is equivalent to a \hyphenation{sub-mul-ti-pli-ca-tive}submultiplicative weight (see Remark \ref{R:growth of weights}) and two equivalent weights yield the same limit.
\end{proof}

\begin{prop}\label{P:Lyap. expo-Basic properties}
  Let $\om$ be a weight on $G$, and let $\mu$ be a probability measure on $G$ with finite first $\log \om$-moment. Then the following hold:
  \begin{enumerate}[(i)]
  \item The Lyapunov exponent $\Lc_{\om}(G,\mu)$ is a non-negative number.
  \item If $\tilde{\om}$ is a weight on $G$ equivalent to $\om$, then $\Lc_{\tilde{\om}}(G,\mu)=\Lc_\om(G,\mu)$.
  \item If $G$ is finitely generated with finite symmetric generating set $S$ containing $e$ and $\mu$ has finite first moment with respect to $|\cdot|_S$, then the Lyapunov exponent of $\mu$ with respect to $\om$ exists, and
  \[
    \Lc_\om(G,\mu)\leq \left( \log \om_S \right) \, l(G,\mu),
  \]
  where $\om_S$ is the growth rate of $\om$ with respect to $S$ (see Remark \ref{R:growth of weights}).
  \end{enumerate}
\end{prop}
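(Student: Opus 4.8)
The plan is to handle (ii) first (it is needed for (i)), then (i), and then to put the real work into (iii).

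\emph{Parts (ii) and (i).} If $\tilde\om$ is equivalent to $\om$, say $c\om\le\tilde\om\le c'\om$, then $\log\tilde\om$ and $\log\om$ differ pointwise by at most $C_1:=\max(|\log c|,|\log c'|)$; in particular finite first $\log\om$-moment transfers to $\tilde\om$, and since $\sum_{s}\mu^{\ast n}(s)=1$ one gets
\[
\Bigl|\,\sum_{s\in G}\mu^{\ast n}(s)\log\tilde\om(s)-\sum_{s\in G}\mu^{\ast n}(s)\log\om(s)\,\Bigr|\le C_1
\]
for all $n$. Dividing by $n$ and letting $n\to\infty$ (the limits exist by Lemma~\ref{L:subadditive-mu log omega}) gives $\Lc_{\tilde\om}(G,\mu)=\Lc_\om(G,\mu)$. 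For (i), I would use Remark~\ref{R:growth of weights}(i) to replace $\om$ by an equivalent weight $\om'$ taking values in $[1,\infty)$; then $\log\om'\ge 0$, so every partial average $\frac1n\sum_s\mu^{\ast n}(s)\log\om'(s)$ is $\ge 0$, hence $\Lc_{\om'}(G,\mu)\ge 0$, and by (ii) this equals $\Lc_\om(G,\mu)$.

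\emph{Part (iii).} By (ii), by Remark~\ref{R:growth of weights}(ii) (equivalent weights have the same growth rate), and since $l(G,\mu)$ does not involve $\om$, I may assume $\om$ is submultiplicative with values in $[1,\infty)$. Set $a_n:=\sup_{s\in S^n}\om(s)$, which is finite because $S$ is finite, and note $a_n^{1/n}\to\om_S$ (Remark~\ref{R:growth of weights}(ii)). Given $\epsilon>0$, choose $N$ with $a_N<(\om_S+\epsilon)^N$. For $s\in G$, write $|s|_S=qN+r$ with $0\le r<N$ and factor a minimal-length word for $s$ as $s=w_1\cdots w_q w_{q+1}$ with $w_i\in S^N$ for $i\le q$ and $w_{q+1}\in S^r\subseteq S^{N-1}$ (using $e\in S$). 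Submultiplicativity then gives
\[
\om(s)\le\Bigl(\prod_{i=1}^{q}\om(w_i)\Bigr)\om(w_{q+1})\le (\om_S+\epsilon)^{qN}\,a_{N-1}\le a_{N-1}\,(\om_S+\epsilon)^{|s|_S},
\]
since $qN\le|s|_S$ and $\om_S+\epsilon\ge 1$. Hence $\log\om(s)\le\log a_{N-1}+|s|_S\log(\om_S+\epsilon)$ for all $s$; in particular finite first moment with respect to $|\cdot|_S$ forces finite first $\log\om$-moment, so $\Lc_\om(G,\mu)$ exists by Lemma~\ref{L:subadditive-mu log omega}. Summing against $\mu^{\ast n}$ and dividing by $n$,
\[
\frac1n\sum_{s\in G}\mu^{\ast n}(s)\log\om(s)\le\frac{\log a_{N-1}}{n}+\log(\om_S+\epsilon)\cdot\frac1n\sum_{s\in G}\mu^{\ast n}(s)|s|_S;
\]
letting $n\to\infty$ (the right-hand average converges to $l(G,\mu)<\infty$ since $\mu$ has finite first moment) gives $\Lc_\om(G,\mu)\le\log(\om_S+\epsilon)\,l(G,\mu)$, and $\epsilon\downarrow 0$ finishes.

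\emph{Main obstacle.} The only real point is the estimate $\log\om(s)\lesssim|s|_S\log\om_S$. One cannot simply use $\om(s)\le a_{|s|_S}$ together with $a_n^{1/n}\to\om_S$, because this convergence only yields $a_n\ge\om_S^n$ — the limit of a subadditive-type sequence is its infimum — so $a_{|s|_S}$ may exceed $\om_S^{|s|_S}$ by more than a bounded factor. The remedy is to leave an $\epsilon$ of room, pass to a scale $N$ with $a_N<(\om_S+\epsilon)^N$, and combine the Euclidean division $|s|_S=qN+r$ with submultiplicativity; the normalization $\om_S+\epsilon\ge 1$ (available after rescaling the weight to values $\ge 1$) is exactly what allows the exponent $qN$ to be enlarged to $|s|_S$.
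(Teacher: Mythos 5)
Your proof is correct, and parts (i) and (ii) run essentially as in the paper (the paper bounds the averages below by $\tfrac{1}{n}\log a$ directly rather than renormalizing to $\om\geq 1$, but this is the same idea; (ii) is left to the reader there). For (iii) you take a genuinely different, though closely related, route to the key pointwise estimate $\log\om(s)\leq \text{const}+|s|_S\log(\om_S+\epsilon)$: you fix a single scale $N$ with $a_N<(\om_S+\epsilon)^N$ and propagate it to all of $G$ by Euclidean division of $|s|_S$ and submultiplicativity. The paper instead uses the convergence $a_k^{1/k}\to\om_S$ at \emph{every} large scale: for any $C>\om_S$ there is $k_0$ with $\sup_{s\in S^k}\om(s)\leq C^k$ for all $k>k_0$, whence $\om(s)\leq C^{|s|_S}$ outside the finite ball $S^{k_0}$, and the sum over $G$ is split into $S^{k_0}$ and its complement. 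Your ``main obstacle'' paragraph mischaracterizes this: convergence of $a_k^{1/k}$ to $\om_S$ does yield the eventual upper bound $a_k\leq(\om_S+\epsilon)^k$ (that is just the definition of convergence; only the \emph{lower} bound $a_k\geq\om_S^k$ comes from the infimum characterization), so the ``simple'' route you dismiss is in fact available and is the one the paper takes. Your Euclidean-division argument is nevertheless valid --- it amounts to reproving the upper half of Fekete's lemma by hand --- and it has the small virtue of producing a single explicit pointwise bound valid for all $s$, from which you cleanly deduce that finite first moment with respect to $|\cdot|_S$ implies finite $\log\om$-moment (justifying the existence claim in (iii)); the paper obtains existence and the inequality simultaneously from the split sum. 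Both arguments are complete.
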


\begin{proof}
(i): Let $a > 0$ be such that $\om(s) \geq a$ for all $s \in G$. Then $\sum_{s\in G} \mu(s) \log \om(s) \geq \sum_{s\in G} \mu(s) \log a=\log a$.
By replacing $\mu$ with $\mu^{*n}$ and taking the limit, we obtain:
$$\Lc_\om(G,\mu) \geq \lim_{n\rightarrow \infty}\dfrac{1}{n}\sum_{s\in G}\mu^{\ast n}(s) \log a \geq \lim_{n\rightarrow \infty}\dfrac{\log a}{n} =0.$$
(ii): This straightforward computation is left to the reader.\\
(iii): Without loss of generality, we assume that $\om\geq 1$. Let $S$ be a finite symmetric generating set for $G$ containing $e$, and let $\om_S$ be
the growth rate of $\om$ with respect to $S$. Fix $C>\om_S$. Then there exists $k_0\in \N$ such that for every $k > k_0$, we have $\sup_{s\in S^k} \om(s)\leq C^k$. Hence,
\begin{align*}
  \sum_{s\in G} \mu(s) \log \om(s) & \leq \sum_{s\in S^{k_0}} \mu(s) \log \om(s) \; + \sum_{k=k_0+1}^{\infty} \; \sum_{|s|=k} \mu(s) \log \om(s) \\
  & \leq \sum_{s\in S^{k_0}} \mu(s) \log \om(s) \; +\sum_{k=k_0+1}^{\infty} \; \sum_{|s|=k} \mu(s) \log C^k \\
  & = \sum_{s\in S^{k_0}} \mu(s) \log \om(s) \; + \; \log C \sum_{k=k_0+1}^{\infty} \; \sum_{|s|=k} \mu(s) \; |s| \\
  & = \sum_{s\in S^{k_0}} \mu(s) \log \om(s) \; + \; \left( \log C \right) \; L(G,\mu).
\end{align*}
By replacing $\mu$ with $\mu^{*n}$ and taking the limit, we have
\begin{align*}
  \Lc_\om(G,\mu) &= \lim_{n\rightarrow \infty}\dfrac{1}{n}\sum_{s\in G}\mu^{\ast n}(s) \log \omega (s)\\
   & \leq \limsup_{n\rightarrow \infty} \; \frac{1}{n}\sum_{s\in S^{k_0}} \mu^{*n}(s) \log \om(s) \; + \; \left( \log C \right) \; \lim_{n\rightarrow \infty} \frac{1}{n} \; L(G,\mu^{*n}) \\
   & \leq  \left( \sup_{s\in S^{k_0}}\om(s) \right) \limsup_{n\rightarrow \infty} \; \frac{1}{n}\sum_{s\in S^{k_0}} \mu^{*n}(s) \; + \; \left( \log C \right) \; l(G,\mu) \\
   &= \left( \log C \right) \; l(G,\mu).
   \end{align*}
The result follows by letting $C\to \om_S^+$.
\end{proof}

As we saw in Definition \ref{D:Lyap expo}, the integrability of $\log \om$ with respect to the probability measure $\mu$ ensures the existence of the Lyapunov exponent of $\mu$. In the following, we show that if we consider a slightly stronger integrability assumption, we obtain an alternative formula for $\Lc_\om(G,\mu)$ in terms of the asymptotic behavior of the spectral radius of $\mu$ in an ascending class of weighted group algebras. First, we prove the following result, which is used several times throughout the paper.

\begin{lem} \label{R:derivative log-power}
Let $(X,\xi)$ be a probability space, and let $f$ be a non-negative measurable function on $X$ such that the integral $\displaystyle \int_X \log f(x) d\xi(x)$ exists and the set $\{x\in X: f(x)=0 \}$ has $\xi$-measure zero.
Suppose that for some $\alpha_0>0$, the function $f^{\alpha_0}$ lies in $L^1(X,\xi)$. Then

\begin{align} \label{eq:derivative log-power}
 \lim_{\alpha \to 0^+} \frac{1}{\alpha} \log \int_X f(x)^\alpha d\xi(x)=\int_X \log f(x)d\xi(x).
\end{align}
\end{lem}

\begin{proof}
First, observe that for every $\alpha \geq 0$ and $t>0$,
\begin{align*}
    t^\alpha \left| \log t \right| \leq \max\{-\log t,0\}+t^{\alpha} \max\{\log t,0\}.
\end{align*}
Hence, if we choose $C>0$ such that
\begin{align} \label{Eq:log-power inequality}
    \log t \leq C t^{\alpha_0/2} \textrm{ for all } t\in [1,\infty),
\end{align}
then for every $0\leq \alpha\leq \alpha_0/2$ and $t>0$, we have
\begin{align*}
    t^\alpha \left| \log t \right| \leq \max\{-\log t,0\}+Ct^{\alpha_0}.
\end{align*}
Therefore, if $f(x) > 0$, we have
\begin{align*}
   \left| \frac{d[f(x)^\alpha]}{d\alpha} \right| & = f(x)^\alpha \left| \log f(x) \right| \\
    &\leq \max \{-\log f(x),0 \}+C f(x)^{\alpha_0}
\end{align*}
for every $0\leq \alpha\leq \alpha_0/2$. Moreover, if we put $Y=f^{-1}(0,1]$, then
\begin{align*}
& \int_X \Bigl( \max \{-\log f(x),0 \}+C f(x)^{\alpha_0} \Bigr) d\xi(x) \\
&=\int_Y -\log f(x) d\xi(x) +C\int_{X} f(x)^{\alpha_0} d\xi(x) \\
&=\int_X -\log f(x) d\xi(x) + \int_{X \setminus Y} \log f(x) d\xi(x) +C\int_{X} f(x)^{\alpha_0} d\xi(x)  \\
    & \leq -\int_X \log f(x) d\xi(x) +2C\int_{X} f(x)^{\alpha_0} d\xi(x) <\infty. 
\end{align*}
Hence, by differentiation under the integral sign (\cite[2.27 Theorem]{Folland}), it follows that the mapping
\begin{align*}
    F \colon \alpha \mapsto \int_X f(x)^\alpha d\xi(x)
\end{align*}
is differentiable on $[0,\alpha_0/2]$, with
\begin{align*}
    F'(\alpha) =
   \int_X f(x)^\alpha \log f(x) d\xi(x),  \quad \alpha \in [0,\alpha_0/2].
\end{align*}
By l'H\^opital's rule, we obtain
\begin{align*}
 \lim_{\alpha \to 0^+} \frac{1}{\alpha} \log \displaystyle\int_X f(x)^\alpha d\xi(x) &= \frac{ \int_X f(x)^\alpha \log f(x) d\xi(x)}{\int_X f(x)^\alpha d\xi(x)}\bigg|_{\alpha=0}\\
 &=\int_X \log f(x)d\xi(x).
\end{align*}
\end{proof}

\begin{thm}\label{T:Lyap expo-weighted L1}
Let $\om$ be a weight on $G$, and let $\mu$ be a probability measure on $G$ with $\mu \in \displaystyle\bigcup_{p\geq 1}\ell^1(G,\omega_p)$. Then $\mu$ has a finite Lyapunov exponent with respect to $\om$, and we have
\begin{align}\label{Eq:Lyap expo-sepc radius weight L1}
\Lc_\om(G,\mu):=\lim_{p \rightarrow \infty} p\log r_{\ell^1(G,\omega_p)}(\mu),
\end{align}
where $r_{\ell^1(G,\omega_p)}(\mu)$ is the spectral radius of $\mu$ in the Banach algebra $\ell^1(G,\omega_p)$.
\end{thm}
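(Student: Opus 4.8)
The plan is to express $\log r_{\ell^1(G,\om_p)}(\mu)$ through the spectral radius formula $r_{\ell^1(G,\om_p)}(\mu) = \lim_{n\to\infty} \|\mu^{\ast n}\|_{1,\om_p}^{1/n}$, and then relate $\|\mu^{\ast n}\|_{1,\om_p}$ to the quantity $S(\mu^{\ast n}) := \sum_{s\in G} \mu^{\ast n}(s)\log\om(s)$ via Jensen's inequality (concavity of $\log$) in the limit $p\to\infty$. First I would fix $p$ with $\mu\in\ell^1(G,\om_p)$; then by weak submultiplicativity of $\om_p$, the map $n\mapsto\log\|\mu^{\ast n}\|_{1,\om_p}$ is (weakly) subadditive, so $r_{\ell^1(G,\om_p)}(\mu)=\inf_n\|\mu^{\ast n}\|_{1,\om_p}^{1/n}=\lim_n\|\mu^{\ast n}\|_{1,\om_p}^{1/n}$ exists (after rescaling $\om$ to be submultiplicative, as in Remark \ref{R:growth of weights}, and using that equivalent weights give the same spectral radius up to a constant that washes out after the $n$-th root). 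Next, since $\mu^{\ast n}$ is a probability measure, Jensen's inequality gives
\begin{align*}
\exp\!\Big(\tfrac{1}{p}S(\mu^{\ast n})\Big)
= \exp\Big(\sum_{s}\mu^{\ast n}(s)\log\om(s)^{1/p}\Big)
\leq \sum_{s}\mu^{\ast n}(s)\,\om(s)^{1/p}
= \|\mu^{\ast n}\|_{1,\om_p},
\end{align*}
which yields the lower bound $p\log r_{\ell^1(G,\om_p)}(\mu)\geq \lim_n\frac1n S(\mu^{\ast n})=\Lc_\om(G,\mu)$ for every admissible $p$; hence $\liminf_{p\to\infty} p\log r_{\ell^1(G,\om_p)}(\mu)\geq \Lc_\om(G,\mu)$.

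For the reverse inequality I would use the interpolation bound \eqref{Eq:complex interpolation-weight L1-norm relations}: fixing some $u\geq 1$ with $\mu\in\ell^1(G,\om_u)$ and writing $\theta=u/p$, we get $\|\mu^{\ast n}\|_{1,\om_p}\leq\|\mu^{\ast n}\|_1^{1-\theta}\|\mu^{\ast n}\|_{1,\om_u}^{\theta}=\|\mu^{\ast n}\|_{1,\om_u}^{u/p}$, since $\|\mu^{\ast n}\|_1=1$. Taking $n$-th roots and letting $n\to\infty$ gives $r_{\ell^1(G,\om_p)}(\mu)\leq r_{\ell^1(G,\om_u)}(\mu)^{u/p}$, i.e.\ $p\log r_{\ell^1(G,\om_p)}(\mu)\leq u\log r_{\ell^1(G,\om_u)}(\mu)$ is bounded above uniformly in $p$; this already shows the limit superior is finite. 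To pin the limit down to $\Lc_\om(G,\mu)$, I would instead refine the upper estimate for $\|\mu^{\ast n}\|_{1,\om_p}$ directly: split $\om(s)^{1/p} = \exp(\tfrac1p\log\om(s))$ and, on the support of $\mu^{\ast n}$, use $e^x \leq 1 + x e^{x}$ or more cleanly $e^x\le 1+xe^{C/p}$ on the set where $\log\om(s)\le C$, together with a tail estimate controlled by the finite $\log\om$-moment (passing to $\mu^{\ast n}$ and using subadditivity of $S$) to obtain $\|\mu^{\ast n}\|_{1,\om_p}\leq 1 + \tfrac1p S(\mu^{\ast n}) + o(1/p)$ uniformly enough in $n$; then $\log\|\mu^{\ast n}\|_{1,\om_p}\leq \tfrac1p S(\mu^{\ast n}) + o(1/p)$, whence $p\log r_{\ell^1(G,\om_p)}(\mu)\le \Lc_\om(G,\mu)+o(1)$ as $p\to\infty$.

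The main obstacle is making the upper bound $\|\mu^{\ast n}\|_{1,\om_p}\le 1+\tfrac1p S(\mu^{\ast n})+(\text{error})$ uniform enough in $n$ that the error term survives the operations $\lim_n\tfrac1n(\cdot)$ and $\lim_p p\,(\cdot)$ in the right order: the naive pointwise inequality $e^x-1\le xe^x$ produces a factor $\om(s)^{1/p}$ that, when integrated against $\mu^{\ast n}$, essentially reconstructs $\|\mu^{\ast n}\|_{1,\om_p}$ itself, so one needs either a bootstrap (using the a priori bound $r_{\ell^1(G,\om_p)}(\mu)\le r_{\ell^1(G,\om_u)}(\mu)^{u/p}$ from the previous paragraph to control the error uniformly) or a dominated-convergence argument applied to the functions $p\big(\om(s)^{1/p}-1\big)\downarrow \log\om(s)$, which are dominated for $p\ge u$ by $u(\om(s)^{1/u}-1)\in L^1(\mu)$; the latter lets one commute $\lim_p$ with $\sum_s\mu(s)(\cdots)$ and then the subadditivity of $n\mapsto S(\mu^{\ast n})$ (already used in Lemma \ref{L:subadditive-mu log omega}) handles the $\lim_n$. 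Once both inequalities are in place, $\lim_{p\to\infty}p\log r_{\ell^1(G,\om_p)}(\mu)=\Lc_\om(G,\mu)$, and finiteness of the Lyapunov exponent follows from the uniform upper bound established along the way.
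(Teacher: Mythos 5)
Your two inequalities are both correct and the overall route is essentially the paper's: the Jensen lower bound $p\log\Vert\mu^{\ast n}\Vert_{1,\om_p}\geq \sum_{s}\mu^{\ast n}(s)\log\om(s)$ is exactly the content of the paper's observation that $p\mapsto p\log\Vert\mu^{\ast n}\Vert_{1,\om_p}$ is decreasing with limit $\sum_s\mu^{\ast n}(s)\log\om(s)$ (which the paper obtains by differentiating $\alpha\mapsto\log\sum_s\mu(s)\om(s)^\alpha$ at $\alpha=0$ --- your dominated-convergence justification via $p(\om(s)^{1/p}-1)\downarrow\log\om(s)$, dominated by $u(\om(s)^{1/u}-1)\in L^1(\mu^{\ast n})$, is if anything the more careful version of that step). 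The one thing to fix is your diagnosis of ``the main obstacle'': no uniformity in $n$ is needed, because the spectral radius formula is an \emph{infimum} over $n$ (Fekete), so $p\log r_{\ell^1(G,\om_p)}(\mu)=\inf_n\frac{p}{n}\log\Vert\mu^{\ast n}\Vert_{1,\om_p}\leq\frac{p}{N}\log\Vert\mu^{\ast N}\Vert_{1,\om_p}$ for each \emph{fixed} $N$; letting $p\to\infty$ with $N$ fixed (your fixed-$n$ dominated-convergence computation) gives $\limsup_{p}p\log r_{\ell^1(G,\om_p)}(\mu)\leq\frac{1}{N}\sum_s\mu^{\ast N}(s)\log\om(s)$, and taking the infimum over $N$ (legitimate by the subadditivity from Lemma \ref{L:subadditive-mu log omega}) yields $\Lc_\om(G,\mu)$. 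This is precisely the paper's exchange $\inf_n\inf_p=\inf_p\inf_n$, which works because both one-variable expressions are themselves infima; with that observation your bootstrap/uniform-error machinery can be discarded entirely and the proof closes.
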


\begin{proof}
Without loss of generality, we assume that $\om$ is submultiplicative and $\om\geq 1$. Fix $p_0\geq 1$ such that $\mu\in \ell^1(G,\om_{p_0})$, and let $p_0\leq p$. Then, by \eqref{Eq:complex interpolation-weight L1}, $\mu\in \ell^1(G,\om_{p})$. Also, if $p_0 \leq u < p$ and $\theta =\dfrac{u}{p}$, then we have
$$\omega_p=(\omega_u)^{\theta},$$
so that by \eqref{Eq:complex interpolation-weight L1-norm relations}, we obtain
\begin{align*}
    1 \leq \Vert \mu \Vert_{1,\om_p} \leq \Vert \mu\Vert_{1}^{1-\theta} \Vert \mu\Vert_{1,\omega_u}^{\theta}  =\Vert \mu\Vert_{1,\omega_u}^{\frac{u}{p}}.
\end{align*}
This implies that the function
$$[p_0,\infty )\rightarrow [0,\infty ), \;\; p \mapsto p\log \Vert \mu\Vert_{1,\omega_p}$$
is decreasing, and hence the limit
$$F(G,\mu):=\displaystyle\lim_{p \rightarrow \infty} p\log \Vert \mu\Vert_{1,\omega_p}$$
exists.
Note that $\sum_{s\in G}\mu (s)\log \omega (s) < \infty$, as $\log \om \leq C \om^{1/p_0}$ for some $C>0$. This implies that $\mu$ has finite Lyapunov exponent with respect to $\om$. Moreover, by Lemma \ref{R:derivative log-power}, with $(X,\xi)=(G,\mu)$ and $f=\om$, we obtain
\begin{align*}
F(G,\mu) =\displaystyle\lim_{\alpha \rightarrow 0^{+}}\dfrac{1}{\alpha}\log \displaystyle\sum_{s\in G} \mu (s) \omega (s)^{\alpha} 
= \displaystyle\sum_{s\in G}\mu (s)\log \omega (s).
\end{align*}

Furthermore, by replacing $\mu$ with $\mu^{*n}$ and taking the limit, we obtain
\begin{align*} 
\Lc_\om(G,\mu)=\lim_{n\rightarrow \infty}\dfrac{1}{n} \, F(G,\mu^{\ast n})=\inf_{n\in \N}\dfrac{1}{n}\,F(G,\mu^{\ast n}),
\end{align*}
where the last equality holds because the function $\N\to [0,\infty), \; n\mapsto F(G, \mu^{\ast n})$ is subadditive; cf.~Lemma \ref{L:subadditive-mu log omega}.
Furthermore, since for every $n\in \N$, the function
$$[p_0,\infty )\rightarrow [0,\infty ), \;\; p \mapsto p\log \Vert \mu^{*n}\Vert_{1,\omega_p}$$
is decreasing, we obtain
\begin{align*}
 \Lc_\om(G, \mu) &= \inf_{n\in \N}\dfrac{1}{n} \, F(G,\mu^{\ast n}) =\inf_{n\in \N} \inf_{p \geq p_0} \frac{p}{n}\log \Vert \mu^{*n}\Vert_{1,\omega_p} \\ &= \inf_{p \geq p_0} \inf_{n\in \N} \frac{p}{n} \log \Vert \mu^{*n}\Vert_{1,\omega_p} = \inf_{p \geq p_0}  p\log r_{\ell^1(G,\omega_p)}(\mu) \\ &= \lim_{p \rightarrow \infty} p\log r_{\ell^1(G,\omega_p)}(\mu).
\end{align*}

\end{proof}

\subsection{Vanishing of the Lyapunov exponent} \label{subsec:vanishinglyapunovexponents}

As mentioned in Section \ref{sec:introduction}, the vanishing of the Avez entropy of a $\mu$-random walk on a countable group $G$ is equivalent to the triviality of the Poisson boundary of $(G,\mu)$. This significant result in probabilistic group theory provides a powerful computational tool for determining the triviality of the Poisson boundary.

In this section, we investigate a similar phenomenon, namely the vanishing of the Lyapunov exponent. We restrict ourselves to the case where the weight comes from a subadditive function, although looking at more general settings is interesting as well and will be considered in future work. Explicitly, we show that in this case, the Lyapunov exponent always vanishes. This is the main result of this section and will provide a key tool in our investigation of Poisson boundaries.

Our proof relies on a major classical result in harmonic analysis and potential theory, namely the theorem of de la Vall\'{e}e Poussin. This theorem provides a necessary and sufficient condition for the uniform integrability of a family of $L^1$-integrable functions (see e.g.~\cite[Theorem 1.2]{Rao-Ren}). By applying a suitable modification of this theorem, we show that for a subadditive function $\fL$ on $G$ and a probability measure $\mu$ with finite $\log(1+\fL)$-moment, one can construct another subadditive function that grows faster than $\fL$ and is still integrable with respect to $\mu$. Together with a novel result, this implies that $\Lc_{1+\fL}(G,\mu)$ vanishes. Concretely, we use the following theorem, which is a combination of the aforementioned modification of the theorem of de la Vall\'{e}e Poussin (yielding the existence of a suitable function $\Psi$ satisfying \eqref{Eq:function De LA V Poussin-2 as used} below) and a novel part (showing that the function given by $F(y) = \Psi(\log(1+y))$ satisfies the given properties), for which we give a proof in Appendix \ref{sec:delavalleepoussin}.

\begin{thm} \label{T:De LA V Poussin as used}
Let $(X,\xi)$ be a probability space, and let $f\in L^1(X,\xi)$ be a non-negative function. Then there is an increasing $1$-Lipschitz function $\psi : [0,\infty)\to [0,\infty)$ such that $\lim_{y\to \infty} \psi(y)=\infty$ and such that the following holds:
If we let $\Psi : [0,\infty) \to [0,\infty)$ be the function given by
\begin{align*}
\Psi(y)=\int_{0}^{y} \psi(z)dz \;\; \textrm{for} \;\; y\geq 0,
\end{align*}
then
\begin{align} \label{Eq:function De LA V Poussin-2 as used}
\Psi(f)\in L^1(X,\xi).
\end{align}
Moreover, the function $F : [0,\infty) \to [0,\infty)$ defined by $F(y)=\Psi(\log (1+y))$ is increasing and differentiable on $[0,\infty)$, and there exists $M > 0$ such that
\begin{align} \label{eq:F weak subadditivity}
  F(y+y') \leq F(y)+F(y')+M \;\;\textrm{for all}\;\; y,y'\geq 0.
\end{align}
\end{thm}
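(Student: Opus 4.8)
The plan is to prove the two halves of the statement separately: first the de la Vall\'ee Poussin part, i.e.\ the existence of the increasing $1$-Lipschitz function $\psi$ (hence of $\Psi$) with $\lim_{y\to\infty}\psi(y)=\infty$ and $\Psi(f)\in L^1(X,\xi)$; and then the asserted properties of $F(y)=\Psi(\log(1+y))$, of which the weak subadditivity \eqref{eq:F weak subadditivity} is the only non-routine point.

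For the first half I would work with the distribution function. Put $a_n=\xi(\{f>n\})$ for integers $n\ge 0$; since $\int_X f\,d\xi=\int_0^\infty\xi(\{f>t\})\,dt$ and $\xi$ is a probability measure, $\sum_{n\ge 0}a_n\le 1+\int_X f\,d\xi<\infty$. Because $\sum_n a_n<\infty$, a standard device (take $c_n$ of size roughly $(\sum_{k\ge n}a_k)^{-1/2}$, then cap increments at $1$) produces a non-decreasing sequence of positive integers $c_n\to\infty$ with $c_{n+1}-c_n\in\{0,1\}$ and $\sum_n c_n a_n<\infty$. Let $\psi$ be piecewise affine on $[0,\infty)$ with $\psi(n)=c_n$ and affine on each $[n,n+1]$; its slopes lie in $\{0,1\}$, so $\psi$ is non-decreasing, $1$-Lipschitz, and $\psi(y)\to\infty$. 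With $\Psi(y)=\int_0^y\psi$, Tonelli's theorem gives $\int_X\Psi(f)\,d\xi=\int_0^\infty\psi(t)\,\xi(\{f>t\})\,dt$, and since $\xi(\{f>t\})\le a_n$ and $\psi(t)\le c_{n+1}\le c_n+1$ on $[n,n+1)$, this is $\le\sum_n(c_n+1)a_n<\infty$, which is \eqref{Eq:function De LA V Poussin-2 as used}. (A negligible strictly increasing $1$-Lipschitz perturbation makes $\psi$ strictly increasing if one insists on that reading of ``increasing''.)

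The remaining properties of $F$ are immediate: $\psi$ is continuous and $y\mapsto\log(1+y)$ is smooth on $[0,\infty)$, so $F$ is $C^1$ with $F'(y)=\psi(\log(1+y))/(1+y)$, hence differentiable and increasing, and $F\ge 0$ since $\Psi\ge 0$. For \eqref{eq:F weak subadditivity}, I would first note that we may assume $\psi\ge 2$ (replace $\psi$ by $\psi+2$: this preserves every hypothesis and \eqref{Eq:function De LA V Poussin-2 as used}, as $2f\in L^1(X,\xi)$), so that $\Psi(v)\ge 2v$ for all $v\ge 0$. By symmetry of the desired inequality, assume $y\ge y'\ge 0$ and set $u=\log(1+y)\ge v=\log(1+y')\ge 0$. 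The key elementary identity is $\log(1+y+y')=u+\log(1+\rho)$ with $\rho=y'/(1+y)\le(1+y')/(1+y)=e^{v-u}\le 1$. Using that $\psi$ is non-decreasing with $\log(1+\rho)\le\log 2<1$, together with $1$-Lipschitzness ($\psi(s)\le\psi(0)+s$),
\[
F(y+y')-F(y)=\int_u^{u+\log(1+\rho)}\psi\ \le\ \log(1+\rho)\,\psi(u+\log 2)\ \le\ e^{v-u}\bigl(\psi(0)+\log 2+u\bigr),
\]
and $e^{v-u}(\psi(0)+\log 2)\le\psi(0)+\log 2$, so it remains to control $e^{v-u}u$. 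If $v\le u/2$, then $e^{v-u}u\le u\,e^{-u/2}\le 2/e$. If $v>u/2$, then $e^{v-u}u\le u<2v\le\Psi(v)=F(y')$. In both cases $F(y+y')\le F(y)+F(y')+M$ with $M=\psi(0)+\log 2+2/e$, which is \eqref{eq:F weak subadditivity}.

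I expect \eqref{eq:F weak subadditivity} to be the only genuine obstacle. The tempting move --- bounding $\log(1+y+y')\le\log(1+y)+\log(1+y')$ and reducing to subadditivity of $\Psi$ --- cannot work, since $\Psi$ is convex with $\Psi(0)=0$, hence \emph{super}additive and (as $\psi\to\infty$) superlinear, so $\Psi(a+b)-\Psi(a)-\Psi(b)$ is unbounded. The argument above instead keeps the sharper estimate $\log(1+y+y')\le u+e^{v-u}$ and exploits the dichotomy $v\le u/2$ versus $v>u/2$: in the first range the exponential factor $e^{v-u}$ defeats the linear factor $u$, while in the second range $v$ is comparable to $u$, so the term $F(y')$ is itself already large enough to swallow the error --- and it is precisely the cost-free normalization $\psi\ge 2$ that makes this absorption close cleanly.
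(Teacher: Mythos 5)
Your proof is correct, but both halves run along genuinely different lines from the paper's. For the de la Vall\'ee Poussin part, the paper follows the classical construction from Rao--Ren: it picks integers $c_n$ with $\int_{\{f>c_n\}}f\,d\xi<2^{-n}$, forms the step function $\phi=n$ on $[c_n,c_{n+1})$, cites the reference for $\Phi(f)\in L^1$, and then takes $\psi$ to be the piecewise-affine minorant through the points $(c_n,n-1)$, which is $1$-Lipschitz because $c_{n+1}-c_n\geq 1$; your layer-cake construction with increments capped at $1$ and the Tonelli verification is an equally valid, and more self-contained, route to the same object. The real divergence is in the weak subadditivity of $F$. The paper's argument is structural: it computes $F'(y)=\psi(\log(1+y))/(1+y)$ and uses the $1$-Lipschitz bound $\psi(w)\leq\psi(u)+(w-u)\leq\psi(u)e^{w-u}$ (valid once $\psi\geq 1$) to show that $u\mapsto\psi(u)/e^u$ is eventually decreasing, hence $F'$ is decreasing on some $[c,\infty)$; eventual concavity gives subadditivity of $y\mapsto F(y+c)-F(c)$, and boundedness of $F'$ absorbs the translation by $c$ at the cost of an additive constant. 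Your argument instead attacks the increment $F(y+y')-F(y)$ directly via the identity $\log(1+y+y')=u+\log(1+\rho)$ with $\rho\leq e^{v-u}$, and closes with the dichotomy $v\leq u/2$ versus $v>u/2$, using the cost-free normalization $\psi\geq 2$ so that $F(y')=\Psi(v)\geq 2v$ swallows the error in the second regime; this yields an explicit constant $M=\psi(0)+\log 2+2/e$. Both exploit the same two ingredients ($1$-Lipschitzness of $\psi$ and the exponential compression of $\log(1+\cdot)$), but the paper packages them as ``$F$ is eventually concave with bounded derivative,'' which is arguably more reusable, while your version is more elementary and quantitative. Your closing remark that $\Psi$ itself is superadditive, so that the naive reduction via $\log(1+y+y')\leq\log(1+y)+\log(1+y')$ cannot work, is accurate and is precisely the obstruction the paper's concavity argument is also designed to circumvent.
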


Now we can prove the main result of this section.

\begin{thm}\label{T:vanishing Lyap expo-log of lenght function}
Let $\fL$ be a subadditive function on $G$, and let $\mu$ be a probability measure on $G$ with finite $\log(1+\fL)$-moment. Then the Lyapunov exponent of $\mu$ with respect to $1+\fL$ vanishes. In other words,
\begin{align*}
\Lc_{1+\fL}(G,\mu)=\displaystyle\lim_{n\rightarrow \infty}\dfrac{1}{n}\sum_{s\in G}\mu^{\ast n}(s) \, \log (1+\fL(s))=0.
\end{align*}
\end{thm}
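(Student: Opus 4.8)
The plan is to reduce the vanishing of $\Lc_{1+\fL}(G,\mu)$ to an application of Theorem \ref{T:De LA V Poussin as used} together with Theorem \ref{T:Lyap expo-weighted L1}. First I would apply Theorem \ref{T:De LA V Poussin as used} to the probability space $(G,\mu)$ and the non-negative function $f = \fL$, which lies in $L^1(G,\mu)$ because $\mu$ has finite $\log(1+\fL)$-moment (hence certainly finite $\fL$-moment on the support, but more carefully: we should apply the theorem to a function that is genuinely $\mu$-integrable; the natural choice is $f(s) = \log(1+\fL(s))$, so that $f \in L^1(G,\mu)$ by hypothesis). This produces an increasing $1$-Lipschitz $\psi$ with $\psi(y)\to\infty$, its antiderivative $\Psi$ with $\Psi(f)\in L^1(G,\mu)$, and — this is the point — the function $F(y)=\Psi(\log(1+y))$ which is increasing, differentiable, and weakly subadditive in the sense of \eqref{eq:F weak subadditivity}.

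Next I would define $\tilde{\om}(s) := \exp\big(F(\fL(s))\big) = \exp\big(\Psi(\log(1+\fL(s)))\big)$. The weak subadditivity \eqref{eq:F weak subadditivity} of $F$ and the subadditivity of $\fL$ together give $\log\tilde\om(st) = F(\fL(st)) \le F(\fL(s)+\fL(t)) \le F(\fL(s)) + F(\fL(t)) + M$, using that $F$ is increasing; hence $\tilde\om$ is a weight on $G$ (with values in $[1,\infty)$ since $F\ge 0$ and $F(0)=\Psi(0)=0$). The $\mu$-integrability of $\log\tilde\om = F(\fL) = \Psi(\log(1+\fL))$ is exactly \eqref{Eq:function De LA V Poussin-2 as used} (reading $\Psi(f)$ with $f = \log(1+\fL)$), so $\mu$ has finite first $\log\tilde\om$-moment and $\Lc_{\tilde\om}(G,\mu)$ is defined and finite.

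The core step is then a domination/squeeze argument. On one hand, since $\psi(y)\to\infty$, for any $N$ we have $\psi(y)\ge N$ for $y$ large, so $\Psi(y)=\int_0^y\psi \ge \Psi(y_0) + N(y-y_0)$ for $y\ge y_0$; feeding in $y=\log(1+\fL(s))$ shows $F(\fL(s)) = \Psi(\log(1+\fL(s)))$ eventually dominates $N\log(1+\fL(s))$, i.e. $\log\tilde\om(s) \ge N\log(1+\fL(s)) - c_N$ for a constant $c_N$, for all $s$. Equivalently $(1+\fL(s))^N \le C_N\,\tilde\om(s)$, which says the weight $\tilde\om_N = \tilde\om^{1/N}$ dominates $c\,(1+\fL)$. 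Consequently $\Lc_{1+\fL}(G,\mu) \le N^{-1}\Lc_{\tilde\om}(G,\mu)$ for every $N$: indeed $\sum_s \mu^{*n}(s)\log(1+\fL(s)) \le N^{-1}\sum_s \mu^{*n}(s)\log\tilde\om(s) + N^{-1}\log C_N$, and dividing by $n$ and letting $n\to\infty$ kills the constant. Letting $N\to\infty$ and using that $\Lc_{\tilde\om}(G,\mu)<\infty$ forces $\Lc_{1+\fL}(G,\mu)=0$. (Nonnegativity from Proposition \ref{P:Lyap. expo-Basic properties}(i) then gives equality to $0$.)

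The main obstacle, and the step requiring the most care, is getting the quantitative lower bound $\log\tilde\om(s) \ge N\log(1+\fL(s)) - c_N$ uniformly in $s$ from the mere fact that $\psi\to\infty$, and making sure the additive constants behave well under passing to $\mu^{*n}$ — this is where I would be careful to invoke that $\tilde\om$ is a genuine weight (so $\Lc_{\tilde\om}$ is honestly finite and subadditive in $n$) rather than just that $\log\tilde\om$ is $\mu$-integrable. An alternative, perhaps cleaner, route avoiding the explicit constant-chasing is to instead run the argument through Theorem \ref{T:Lyap expo-weighted L1}: check $\mu \in \bigcup_{p\ge1}\ell^1(G,\tilde\om_p)$ (true since $\mu\in\ell^1(G)$ and $\log\tilde\om\in L^1(\mu)$ already gives $\mu\in\ell^1(G,\tilde\om_p)$ for $p$ large by... actually this needs $\tilde\om^{1/p}\mu\in\ell^1$, which follows from $\mu\in\ell^1$ only if we know more — so the $\ell^1$ route in fact needs $\tilde\om^{1/p}$-summability, a genuinely stronger condition that de la Vallée Poussin does \emph{not} obviously supply). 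So I expect the direct squeeze argument above, carefully handling the constants, to be the intended proof, with the uniform domination estimate as the crux.
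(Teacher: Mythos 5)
Your proposal is correct and follows essentially the same route as the paper: apply the de la Vall\'ee Poussin variant (Theorem \ref{T:De LA V Poussin as used}) to $f=\log(1+\fL)$, build the faster-growing subadditive majorant $F(\fL)$ whose exponential is a genuine weight with $\mu$-integrable logarithm, and squeeze $\Lc_{1+\fL}(G,\mu)$ against its finite Lyapunov exponent. The only (harmless) difference is the final domination step: the paper splits the sum over $\{\fL(s_1\cdots s_n)<N\}$ and its complement using the ratio bound $\log(1+y)/(F(y)+M)<\varepsilon$, whereas you derive the global affine bound $\Psi(y)\ge Ny-c_N$ and hence a uniform pointwise domination $\log(1+\fL)\le N^{-1}\log\tilde\om+N^{-1}c_N$ --- an equally valid, arguably cleaner, way to conclude (and your closing observation that the route through Theorem \ref{T:Lyap expo-weighted L1} would require unavailable $\ell^1(G,\tilde\om_p)$-membership is also accurate).
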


\begin{proof}
Consider the probability space $(G,\mu)$. By hypothesis, $\log(1+\fL)\in L^1(G,\mu)$. Suppose that $\Psi$ and $F$ are functions as given by Theorem \ref{T:De LA V Poussin as used} for $f=\log(1+\fL)$. By \eqref{Eq:function De LA V Poussin-2 as used}, we have
\begin{align}\label{Eq:vanishing Lyap expo lenght-1}
\Psi(\log(1+\fL))\in L^1(G,\mu).
\end{align}
By abuse of notation, we write $F(\fL)=\Psi(\log(1+\fL))$.

Secondly, since
\begin{align*} 
\lim_{y\to \infty} \frac{\Psi(y)}{y}=\infty
\end{align*}
and $F(y)=\Psi(\log(1+y))$, we have
\begin{align}\label{Eq:vanishing Lyap expo lenght-2}
\lim_{y\to \infty} \frac{F(y)}{\log(1+y)}=\infty.
\end{align}
Finally, by \eqref{eq:F weak subadditivity}, there exists $M>0$ such that
\begin{align}\label{Eq:vanishing Lyap expo lenght-3}
  F(y+y') \leq F(y)+F(y')+M \;\;\textrm{for all}\;\; y,y' \geq 0.
\end{align}
We now define the function $\Theta : G \to [M,\infty)$ by
\begin{align}\label{Eq:faster growing lenght function on G}
  \Theta(s)=F(\fL(s))+M, \;\;s\in G.
\end{align}
Since $F$ is increasing on $[0,\infty)$ and $\fL$ is subadditive on $G$, it follows from \eqref{Eq:vanishing Lyap expo lenght-3} that $\Theta$ is also subadditive on $G$. Indeed, for every $s,t\in G$, we have
\begin{align*}
  \Theta(st) &= F(\fL(st))+M \leq F(\fL(s)+\fL(t))+M \\ &\leq F(\fL(s))+F(\fL(t))+2M = \Theta(s)+\Theta(t).
\end{align*}
Moreover, by \eqref{Eq:vanishing Lyap expo lenght-1}, $\mu$ has finite first moment with respect to $\Theta$, so that $\Lc_{\zeta}(\mu,G)$, with $\zeta=e^{\Theta}$, exists.
Now, let $\varepsilon>0$ be arbitrary. By \eqref{Eq:vanishing Lyap expo lenght-2}, there exists $N>0$ such that for all $y\geq N$,
\begin{align}\label{Eq:vanishing Lyap expo lenght-4}
   \frac{\log(1+y)}{F(y)+M} <\varepsilon.
\end{align}
For every $n\in \N$, let
\begin{align*} 
  A_n:=\{(s_1,\ldots, s_n)\in G^n : \fL(s_1\cdots s_n)<N \}.
\end{align*}
Then we have
\begin{align*}
  \sum_{(s_1,\ldots,s_n)\in A_n} &\mu(s_1)\cdots \mu(s_n) \log (1+\fL(s_1\cdots s_n)) \\ &\leq  \log(1+N)  \sum_{(s_1,\ldots,s_n)\in A_n} \mu(s_1)\cdots \mu(s_n) \\ &\leq \log(1+N).
\end{align*}
Using the properties of the convolution product, we obtain
\begin{align*}
  \sum_{s\in G} &\mu^{*n}(s)\log (1+\fL(s)) =\sum_{s_1,\ldots,s_n\in G} \mu(s_1)\cdots \mu(s_n) \log (1+\fL(s_1\cdots s_n)) \\
   & \leq \log(1+N) +\sum_{(s_1,\ldots,s_n)\in A_n^c} \mu(s_1)\cdots \mu(s_n) \log (1+\fL(s_1\cdots s_n)) \\
  &\leq  \log(1+N) +\varepsilon \sum_{(s_1,\ldots,s_n)\in A_n^c} \mu(s_1)\cdots \mu(s_n) \Theta(s_1\cdots s_n) \ \ (\text{by}\ \eqref{Eq:faster growing lenght function on G}\ \text{and}\ \eqref{Eq:vanishing Lyap expo lenght-4})\\
  &\leq  \log(1+N) +\varepsilon \sum_{s\in G} \mu^{*n}(s)\Theta(s).
\end{align*}
By dividing both sides by $n$ and letting $n$ tend to $\infty$, we obtain the following:
\begin{align*}
  \Lc_{1+\fL}(G,\mu) &= \lim_{n\to \infty} \frac{1}{n} \sum_{s\in G}\mu^{*n}(s)\log (1+\fL(s)) \\
  &\leq \lim_{n\to \infty} \frac{\log(1+N)}{n}  +\varepsilon \lim_{n\to \infty} \frac{1}{n} \sum_{s\in G} \mu^{*n}(s)\Theta(s) \\
  &=\varepsilon \, \Lc_{\zeta}(G,\mu),
\end{align*}
where, as before, $\zeta=e^{\Theta}$. The final result now follows, as $\varepsilon>0$ was arbitrary.
\end{proof}

\begin{rem}
If we consider instead of the assumption in Theorem \ref{T:vanishing Lyap expo-log of lenght function} the stronger assumption that for some $\alpha >0$, the function $(1+\fL)^\alpha$ is $\mu$-integrable, then the vanishing of the Lyapunov exponent of $\mu$ with respect to $(1+\fL)$ can be deduced in another way. Indeed, it then follows from \cite[Corollary 1]{Pytlik} that for sufficiently large $p$, the spectral radius of $\mu$, which appears in \eqref{Eq:Lyap expo-sepc radius weight L1} with $\om=1+\fL$, equals $1$, so that its logarithm vanishes. Hence, in that case, the result follows directly from Theorem \ref{T:Lyap expo-weighted L1}.
\end{rem}

\section{Weighted entropy of random walks}

In this section, we consider a generalization of the concept of Avez entropy by taking into account a weight on the underlying group.

\begin{defn}\label{D:weighted Shannon entropy}
Let $\om$ be a weight on $G$, and let $\mu$ be a probability measure on $G$.
The \textbf{weighted Shannon entropy} of $\mu$ with respect to $\om$ is defined as
\begin{equation}\label{Eq:weighted Shannon entropy-defn}
H_{\omega}(G,\mu):=-\sum_{s\in G}\mu (s)\log [\mu (s)\omega (s)].
\end{equation}
\end{defn}

\begin{rem}\label{R:weighted Shannon entropy vs relative entropy}
The quantity $H_{\omega}(G,\mu)$ corresponds to the additive inverse of the Kullback--Leibler divergence (also called relative entropy) $D_{\textrm{KL}}(\mu \ \vert\vert \ \frac{1}{\om})$ of $\mu$ with respect to $1 / \om$, i.e.
\begin{equation*}
H_{\omega}(G,\mu) = -D_{\textrm{KL}}\left(\mu \ \Big\vert\Big\vert \ \frac{1}{\om}\right).
\end{equation*}
Relative entropy has been studied extensively in statistics and plays a role in numerous practical applications; see e.g.~\cite{mackay,watrous} for applications in (quantum) information theory. In case $\om^{-1}$ is also a probability measure, the Kullback--Leibler divergence can be viewed as a notion of statistical distance between two probability measures.
\end{rem}

\begin{rem} \label{R:defn weighted entropy}
It is possible that the weighted Shannon entropy of a probability measure does not exist. However, since we can rewrite \eqref{Eq:weighted Shannon entropy-defn} as
\begin{equation}\label{Eq:weighted entropy vs entropy and log moment}
H_{\omega}(G,\mu)=H(G,\mu)-\displaystyle\sum_{s\in G}\mu (s)\log \omega (s),
\end{equation}
we see that the weighted Shannon entropy exists if $\mu$ has both finite entropy and finite $\log \om$-moment. In other words, if $\mu$ has finite $\log \om$-moment, then it has finite entropy if and only if it has finite weighted Shannon entropy with respect to $\om$.
\end{rem}

It follows from the asymptotic behavior of the R\'{e}nyi entropy that the Shannon entropy satisfies the following asymptotic relation of the norm of $\mu$ in $\ell^q(G)$-spaces:
\begin{equation*} 
H(G,\mu)=\displaystyle\lim_{p\rightarrow \infty} -p\log \Vert \mu \Vert_{q},
\end{equation*}
where $q$ satisfies $\dfrac{1}{p}+\dfrac{1}{q}=1$. We now show that a similar relation holds for the weighted Shannon entropy. However, since probability measures on $G$ may not lie in weighted $\ell^q$-spaces (for unbounded weights), we must consider a moderate growth condition on $\mu$ to ensure that it eventually does. Recall from \eqref{eq:omegap} that given a weight $\om$ on $G$, the weight $\om_p$ is given by $\om_p(s)=\om(s)^{\frac{1}{p}}$.

\begin{thm} \label{T:compare weighted and non-weighted Shannon entropy}
Let $\om$ be a weight on $G$, and let $\mu$ be a probability measure on $G$ such that $\mu\in \ell^{q_0}(G,\omega_{p_0})$, with $1< q_0\leq p_0 <\infty$ and $\frac{1}{p_0}+\frac{1}{q_0}=1$.
\begin{enumerate}[(i)]
    \item The mapping
\begin{equation} \label{Eq:asymptote increasing-log weighted Lq norms}
[p_0,\infty) \rightarrow \R,\ \ p \mapsto -p \log \Vert \mu\Vert_{q,\omega_{p}},
\end{equation}
where $p$ and $q$ satisfy $\dfrac{1}{p}+\dfrac{1}{q}=1$, is an increasing function on $[p_0,\infty)$.
\item If, in addition, $\mu$ has both finite entropy and finite $\log \om$-moment, then
\begin{equation} \nonumber
H_{\omega}(G,\mu)=\displaystyle\lim_{p\rightarrow \infty} -p \log \Vert \mu \Vert_{q,\omega_{p}},
\end{equation}
where again $\dfrac{1}{p}+\dfrac{1}{q}=1$.
\end{enumerate}
\end{thm}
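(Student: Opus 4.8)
The plan is to deduce (i) directly from Proposition~\ref{prop:interpolationconsequence} and to prove (ii) by differentiating the relevant sum term by term at the endpoint $p=\infty$, in the spirit of the derivative computation in the proof of Theorem~\ref{T:Lyap expo-weighted L1}. For (i): apply Proposition~\ref{prop:interpolationconsequence} with $X=G$, $\nu$ the counting measure, weight function $\om$, and $f=\mu$. Since $\mu$ is a probability measure, $\|\mu\|_1=1$, and $\mu\in\ell^{q_0}(G,\om_{p_0})=L^{q_0}(G,\om^{1/p_0})$ by hypothesis, so all hypotheses of the proposition hold and its conclusion is precisely the asserted monotonicity of the map in \eqref{Eq:asymptote increasing-log weighted Lq norms}. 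In particular $\|\mu\|_{q,\om_p}<\infty$ for every $p\geq p_0$, and the limit $\lim_{p\to\infty}\bigl(-p\log\|\mu\|_{q,\om_p}\bigr)$ exists in $(-\infty,+\infty]$.

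For (ii), after passing (harmlessly, as in earlier proofs) to an equivalent weight with $\om\geq1$, I use $\tfrac1p+\tfrac1q=1$ to write $\tfrac qp=q-1$ and $\tfrac pq=p-1$, and set $\alpha:=q-1\in(0,q_0-1]$, so that $p-1=\tfrac1\alpha$ and $\alpha\to0^{+}$ as $p\to\infty$. Then
\begin{align*}
  -p\log\|\mu\|_{q,\om_p}=-\tfrac1\alpha\log\Phi(\alpha),\qquad\text{where}\quad \Phi(\alpha):=\sum_{s\in G}\mu(s)\bigl(\mu(s)\om(s)\bigr)^{\alpha}=\|\mu\|_{q,\om_p}^{\,q}.
\end{align*}
By (i) we have $\Phi(\alpha)<\infty$ on $(0,q_0-1]$, and clearly $\Phi(0)=1$, so the theorem reduces to proving
\begin{align*}
  \lim_{\alpha\to0^{+}}\tfrac1\alpha\log\Phi(\alpha)=\sum_{s\in G}\mu(s)\log\bigl(\mu(s)\om(s)\bigr)=-H_{\om}(G,\mu).
\end{align*}

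The core step is to show that $\Phi$ is right-differentiable at $0$ with $\Phi'(0^{+})=\sum_{s\in G}\mu(s)\log(\mu(s)\om(s))$, i.e.~to justify differentiating $\Phi$ under the sum. Since $\tfrac{\Phi(\alpha)-1}{\alpha}=\sum_{s\in G}\mu(s)\tfrac{(\mu(s)\om(s))^{\alpha}-1}{\alpha}$ and each summand tends pointwise to $\mu(s)\log(\mu(s)\om(s))$, it suffices to find a summable dominating function, which I obtain by splitting the index set into $\{s:\mu(s)\om(s)\geq1\}$ and $\{s:\mu(s)\om(s)<1\}$. On the first set, $\alpha\mapsto\tfrac{t^{\alpha}-1}{\alpha}$ is increasing (convexity of $\alpha\mapsto t^{\alpha}$), so the summand is bounded by $\mu(s)\tfrac{(\mu(s)\om(s))^{q_0-1}-1}{q_0-1}$, which is summable since $\sum_{s\in G}\mu(s)(\mu(s)\om(s))^{q_0-1}=\|\mu\|_{q_0,\om_{p_0}}^{\,q_0}<\infty$. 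On the second set, the elementary inequality $1-t^{\alpha}\leq\alpha\log(1/t)$ bounds the summand in absolute value by $\mu(s)|\log(\mu(s)\om(s))|$, and $\sum_{s\in G}\mu(s)|\log(\mu(s)\om(s))|\leq H(G,\mu)+\sum_{s\in G}\mu(s)\log\om(s)<\infty$ by the finite-entropy and finite $\log\om$-moment hypotheses (this also records that $H_{\om}(G,\mu)$ is finite; cf.~Remark~\ref{R:defn weighted entropy}). Dominated convergence then gives $\Phi'(0^{+})=\sum_{s\in G}\mu(s)\log(\mu(s)\om(s))$. Finally, from $\Phi(\alpha)\to\Phi(0)=1$ and the existence of $\lim_{\alpha\to0^{+}}\tfrac{\Phi(\alpha)-1}{\alpha}$, the factorization $\tfrac{\log\Phi(\alpha)}{\alpha}=\tfrac{\log\Phi(\alpha)-\log1}{\Phi(\alpha)-1}\cdot\tfrac{\Phi(\alpha)-1}{\alpha}$, in which the first factor tends to $1$ (the derivative of $\log$ at $1$), yields $\lim_{\alpha\to0^{+}}\tfrac1\alpha\log\Phi(\alpha)=\Phi'(0^{+})=-H_{\om}(G,\mu)$, whence $-p\log\|\mu\|_{q,\om_p}\to H_{\om}(G,\mu)$ as required.

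The main obstacle is exactly this interchange of limit and infinite sum, i.e.~constructing a single summable dominating function valid uniformly for small $\alpha>0$; it is what forces the two-regime split and uses both hypotheses in an essential way — $\mu\in\ell^{q_0}(G,\om_{p_0})$ controls the summands where $\mu(s)\om(s)$ is large, while finite entropy together with finite $\log\om$-moment controls those where $\mu(s)\om(s)$ is small. The remaining ingredients — the conjugate-exponent bookkeeping and the a priori existence of the limit from (i) — are routine.
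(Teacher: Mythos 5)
Your proof is correct and takes essentially the same approach as the paper: part (i) is the interpolation monotonicity of Proposition \ref{prop:interpolationconsequence} (which the paper simply rederives in place from \eqref{Eq:complex interpolation-weight Lq-norm relations}), and part (ii) reduces $-p\log\Vert\mu\Vert_{q,\om_p}$ to the one-sided derivative of $\alpha\mapsto\sum_{s\in G}\mu(s)\bigl(\mu(s)\om(s)\bigr)^{\alpha}$ at $\alpha=0$, which the paper evaluates by a formal application of l'H\^opital's rule. Your two-regime dominated-convergence bound just makes explicit the term-by-term differentiation that the paper leaves implicit.
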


\begin{proof}
(i): Since $\mu\in \ell^1(G) \cap \ell^{q_0}(G,\omega_{p_0})$, by the interpolation relation \eqref{Eq:complex interpolation-weight Lq}, $\mu \in \ell^{q}(G,\omega_{p})$ for every
$1 < q\leq q_0\leq p_0 \leq p < \infty$ with $\dfrac{1}{p}+\dfrac{1}{q}=1$. Moreover, by \eqref{Eq:complex interpolation-weight Lq-norm relations}, for every
$1 < q_2\leq q_1 \leq q_0\leq p_0 \leq p_1 \leq p_2 < \infty$, where $q_1$ and $q_2$ are the conjugates of $p_1$ and $p_2$, respectively, we have
\begin{align*}
  \|\mu\|_{q_2,\om_{p_2}}\leq \|\mu\|_{q_1,\om_{p_1}}^\theta, \ \ \text{with}\ \ \theta=\frac{p_1}{p_2}.
\end{align*}
In particular,
\begin{align*}
  \|\mu\|_{q_1,\om_{p_1}}^{-p_1} \leq \|\mu\|_{q_2,\om_{p_2}}^{-p_2} \ \ \text{whenever}\ \ p_1\leq p_2,
\end{align*}
so that the mapping \eqref{Eq:asymptote increasing-log weighted Lq norms}
is an increasing function on $[p_0,\infty)$.

(ii): For $1<q\leq q_0 \leq p_0 \leq p < \infty $ with $\dfrac{1}{p}+\dfrac{1}{q}=1$, we have
\begin{align*}
-p\log \Vert \mu \Vert_{q,\omega_{p}} & = - \frac{\log \displaystyle\sum_{s\in G}\mu (s)^{q}\omega_{p}(s)^{q}}{q-1}
 =-\frac{\log \displaystyle\sum_{s\in G}\mu (s)^{q}\omega (s)^{\frac{q}{p}}}{q-1} \\
& =-\frac{\log \displaystyle\sum_{s\in G}\mu (s)^{q}\omega (s)^{q-1}}{q-1}
 =-\frac{\log \displaystyle\sum_{s\in G}[\mu (s)\omega(s)]^{q-1} \mu(s)}{q-1}.
\end{align*}
Hence, by Lemma \ref{R:derivative log-power}, with $(X,\xi)=(G,\mu)$ and $f=\mu\om$, we obtain
\begin{align*}
\lim_{p\rightarrow \infty} -p\log \Vert \mu \Vert_{q,\omega_{p}} &= \lim_{\alpha\to 0^+} -\frac{1}{\alpha} \log \displaystyle\sum_{s\in G}[\mu (s)\omega(s)]^{\alpha} \mu(s)\\
& =-\displaystyle\sum_{s\in G}\mu (s)\log [\mu (s)\omega (s)]\\
& = H_{\omega}(G,\mu).
\end{align*}
\end{proof}
Finally, we introduce the weighted Avez entropy of a probability measure. Like with weighted Shannon entropy, it is analogous to the (unweighted) Avez entropy with the Shannon entropy of convolution powers being replaced with the weighted Shannon entropy of convolution powers defined in Definition \ref{D:weighted Shannon entropy}.

\begin{defn}\label{D:weighted Avez entropy}
Let $\om$ be a weight on $G$, and let $\mu$ be a probability measure on $G$ with both finite entropy and finite $\log \om$-moment. The \textbf{weighted (Avez) entropy} of $\mu$ with respect to $\omega$ is defined as
\begin{equation}\label{Eq:weighted Avez entropy-defn}
h_{\omega }(G,\mu):= \displaystyle\lim_{n\rightarrow \infty} \dfrac{H_{\omega }(G,\mu^{\ast n})}{n}.
\end{equation}
\end{defn}

\begin{rem}

\begin{enumerate}[(i)]
\item Let $\om$ be a weight on $G$, and let $\mu$ be a probability measure on $G$ such that $\mu$ has both finite entropy and finite $\log \om$-moment. Although the mapping
$$\N\to \R, \ \ n\mapsto H_{\omega }(G,\mu^{\ast n})$$
is well-defined (by Remark \ref{R:defn weighted entropy}), it may not be subadditive. However, since
$$H_{\omega}(G,\mu^{\ast n})=H(G,\mu^{\ast n})-\displaystyle\sum_{s\in G}\mu^{\ast n} (s)\log \omega (s),$$
and since the function on the right-hand side is the difference of two subadditive functions on $\N$,
the limit in \eqref{Eq:weighted Avez entropy-defn} exists and we have
\begin{align*}
 \lim_{n\rightarrow \infty} \frac{1}{n} H_{\omega}(G,\mu^{\ast n}) &=
  \lim_{n\rightarrow \infty} \frac{1}{n}H(G,\mu^{\ast n})- \lim_{n\rightarrow \infty} \frac{1}{n} \displaystyle\sum_{s\in G}\mu^{\ast n} (s)\log \omega (s) \\
  &= h(G,\mu)-\Lc_\om(G,\mu);
\end{align*}
that is,
\begin{align}\label{Eq:weighted Avez entropy-difference entropy and Lyap expo}
h_{\omega }(G,\mu) & =h(G,\mu)-\Lc_\om(G,\mu).
\end{align}
In other words, the weighted entropy of the $\mu$-random walk with respect to $\omega$ is the difference of the (non-weighted) entropy and the Lyapunov exponent with respect to $\om$. In particular, the weighted entropy is equal to the (non-weighted) entropy if and only if the Lyapunov exponent of $\mu$ with respect to $\om$ vanishes:
\begin{align}\label{Eq:equality of weighted and nonweighted Avez entropy}
h_{\omega }(G,\mu)=h(G,\mu) \; \Longleftrightarrow \; \Lc_\om(G,\mu)=0.
\end{align}
This is a very useful relation that we will exploit in Sections \ref{S:convolution entropy} and \ref{sec:stationarydynamicalsystems}.
\item Note that $h_\om(G,\mu)$ might be a negative number. For instance, let $G$ be a finitely generated group with exponential growth with respect to a finite generating set $S$. Consider the weight $\om=a^{|\cdot|_S}$, with $\log a > v_S$, where $v_S$ is the logarithmic volume growth (see Section \ref{subsec:entropyrw}). Then for every probability measure $\mu$ on $G$ with finite first moment, we have
$$\Lc_\om(G,\mu)=(\log a)\, l(G,\mu) > h(G,\mu),$$
where $l(G,\mu)$ denotes, as before, the speed of the $\mu$-random walk, and the last inequality follows from \eqref{Eq:Fund inequality}. By \eqref{Eq:weighted Avez entropy-difference entropy and Lyap expo}, the weighted entropy of the $\mu$-random walk is negative.
\end{enumerate}
\end{rem}

There is another consequence of the relations \eqref{Eq:weighted entropy vs entropy and log moment} and \eqref{Eq:weighted Avez entropy-difference entropy and Lyap expo} that is useful and crucial for our approach. If the weighted Shannon entropy is a non-positive number, then the Shannon entropy becomes dominated by the $\mu$-integral of $\log \om$. This simple fact is very useful and we will take full advantage of it. It occurs, for instance, whenever the inverse of the weight is $\ell^1$-integrable. In fact, as we see below, we can show that the Avez entropy itself can be realized as the Lyapunov exponent of $\mu$ with respect to a particular reverse-integrable weight making it ``minimal'' among all Lyapunov exponents satisfying that property.

\begin{thm}\label{T:Lyap Expo dominates entropy-inverse integrable weights}
Let $\mu$ be a non-degenerate probability measure on $G$, and let $\Omega$ be the set of all weights $\om$ on $G$ such that $\mu$ has finite $\log \om$-moment and $\omega^{-1} = 1/\om \in \ell^1(G)$. Then the $\mu$-random walk has finite Avez entropy and
\begin{align*} 
h(G,\mu)=\inf \big\{\Lc_\om(G,\mu) : \om\in \Omega \big\}.
\end{align*}
Moreover, the infimum above is attained for a weight $\bar\om \in \Omega$ that also satisfies
\begin{align}\label{Eq:Lyap Expo-algebraic weight}
  \bar\om^{-1}*\bar\om^{-1} \leq d\, \bar\om^{-1},
\end{align}
where $d$ a positive constant. In this case, we have
\begin{align}\label{Eq:Lyap Expo equals entropy}
  h(G,\mu)=\Lc_{\bar\om}(G,\mu).
\end{align}
\end{thm}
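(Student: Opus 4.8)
The plan is to combine the identity $h_\om(G,\mu) = h(G,\mu) - \Lc_\om(G,\mu)$ from \eqref{Eq:weighted Avez entropy-difference entropy and Lyap expo} with a uniform-integrability argument supplied by the de la Vall\'ee Poussin machinery of Theorem \ref{T:De LA V Poussin as used}. First I would show that $\Omega \neq \emptyset$ and that $h(G,\mu) < \infty$: since $\mu$ is non-degenerate, one can fix a subadditive length-type function $\fL$ on $G$ for which $\mu$ has finite $\log(1+\fL)$-moment (e.g.\ built from the $\mu$-weights of the generators), and then apply Theorem \ref{T:De LA V Poussin as used} to $f = \log(1+\fL) \in L^1(G,\mu)$ to obtain the increasing differentiable function $F$ with the weak subadditivity \eqref{eq:F weak subadditivity}. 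Setting $\Theta(s) = F(\fL(s)) + M$ makes $\Theta$ subadditive on $G$ (same computation as in the proof of Theorem \ref{T:vanishing Lyap expo-log of lenght function}) with $\mu$ having finite first $\Theta$-moment. The key point is that because $F(y)/\log(1+y) \to \infty$, the weight $\zeta = e^{c\Theta}$ has $\zeta^{-1} \in \ell^1(G)$ once the constant $c$ is chosen large enough: indeed $\zeta(s)^{-1} = e^{-c\Theta(s)} \le (1+\fL(s))^{-c\log(1+\fL(s))/\fL(s) \cdot \fL(s)}$... more cleanly, one compares $\sum_s e^{-c\Theta(s)}$ with a growth/counting estimate on $G$ — this summability is exactly where one needs $F$ to grow super-logarithmically. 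So $\zeta \in \Omega$, and also $\Lc_\zeta(G,\mu) = c\,\Lc_{e^\Theta}(G,\mu)$ is finite. Then $h_\zeta(G,\mu) = h(G,\mu) - \Lc_\zeta(G,\mu)$; but by \eqref{Eq:weighted entropy vs entropy and log moment} applied to each $\mu^{*n}$, $H_\zeta(G,\mu^{*n}) \le 0$ once $\zeta^{-1} \in \ell^1$ is normalized to a subprobability (since $-\sum_s p(s)\log[p(s)/\zeta^{-1}(s)] \le -\log\sum_s \zeta^{-1}(s) \le 0$ by Jensen), giving $h_\zeta(G,\mu) \le 0$ and hence $h(G,\mu) \le \Lc_\zeta(G,\mu) < \infty$. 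In particular $h(G,\mu) \le \inf\{\Lc_\om(G,\mu): \om\in\Omega\}$.

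Next I would prove the reverse inequality $\Lc_\om(G,\mu) \ge h(G,\mu)$ for every $\om \in \Omega$; together with the above this yields $h(G,\mu) = \inf\{\Lc_\om(G,\mu) : \om \in \Omega\}$. This is immediate from the same two facts: for $\om \in \Omega$, after rescaling so $\sum_s \om(s)^{-1} \le 1$, Jensen's inequality gives $H_\om(G,\mu^{*n}) = -\sum_s \mu^{*n}(s)\log\frac{\mu^{*n}(s)}{\om(s)^{-1}} \le 0$ for all $n$, hence $h_\om(G,\mu) \le 0$, hence $\Lc_\om(G,\mu) = h(G,\mu) - h_\om(G,\mu) \ge h(G,\mu)$ by \eqref{Eq:weighted Avez entropy-difference entropy and Lyap expo}. (Rescaling $\om$ by a positive constant changes neither membership in $\Omega$ nor $\Lc_\om$, by Proposition \ref{P:Lyap. expo-Basic properties}(ii).)

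For the "moreover" part I would upgrade the witness $\zeta$ constructed above to one satisfying the algebraic condition \eqref{Eq:Lyap Expo-algebraic weight}, which says $\bar\om^{-1} * \bar\om^{-1} \le d\,\bar\om^{-1}$, i.e.\ $\bar\om^{-1}$ is a (sub-)convolution idempotent up to a constant — equivalently $\bar\om$ is submultiplicative: if $\bar\om(st) \le D\,\bar\om(s)\bar\om(t)$ then $\bar\om(st)^{-1} \ge D^{-1}\bar\om(s)^{-1}\bar\om(t)^{-1}$, and summing over the fibre $\{(s,t): st = r\}$ gives $\bar\om^{-1}*\bar\om^{-1}(r) \le D\,\bar\om(r)^{-1}\sum_{st=r}(\text{...})$ — so the content is that one can take the exponent of a \emph{subadditive} function, which is exactly what $\Theta$ is. Thus $\bar\om := e^{c\Theta}$ (for the same large $c$) is a genuine submultiplicative weight in $\Omega$ with $\bar\om^{-1}*\bar\om^{-1} \le d\,\bar\om^{-1}$ for a suitable $d$, and the chain of inequalities forces $h_{\bar\om}(G,\mu) = 0$, i.e.\ $h(G,\mu) = \Lc_{\bar\om}(G,\mu)$, so the infimum is attained at $\bar\om$.

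The main obstacle I expect is verifying the summability $\zeta^{-1} = e^{-c\Theta} \in \ell^1(G)$: this is not automatic from $F(y)/\log(1+y)\to\infty$ alone on an arbitrary countable group, because one has no a priori control on how many group elements $s$ have $\fL(s) \le r$. The fix is to choose $\fL$ at the outset so that $\mu$-integrability of $\log(1+\fL)$ is compatible with a counting bound — the natural choice being to take $\fL$ (or rather the level sets of $\fL$) adapted to $\mu$ itself, for instance $\fL$ arising from an enumeration in which $\mu(s)$ decays, or to run the de la Vall\'ee Poussin theorem directly on the probability space $(G,\mu)$ with $f = -\log\mu$ (which has finite integral precisely because $H(G,\mu) < \infty$) so that $\Psi(-\log\mu) \in L^1(G,\mu)$ directly controls $\sum_s \mu(s)\cdot(\text{something}) < \infty$, and then absorb this into the weight. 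Threading this choice correctly — so that simultaneously $\mu$ has finite $\log\bar\om$-moment, $\bar\om^{-1}\in\ell^1$, and $\bar\om$ is submultiplicative — is the delicate bookkeeping step; everything else is the soft argument via \eqref{Eq:weighted Avez entropy-difference entropy and Lyap expo} and Jensen's inequality.
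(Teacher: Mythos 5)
Your first half is correct and is essentially the paper's argument: for any $\om\in\Omega$, Gibbs'/Jensen's inequality applied to $\mu^{*n}$ against the (normalized) measure $\om^{-1}/\|\om^{-1}\|_1$ gives $H(G,\mu^{*n})\leq \sum_s\mu^{*n}(s)\log\om(s)+\log\|\om^{-1}\|_1$, hence $h(G,\mu)\leq \Lc_\om(G,\mu)$. The second half — producing a minimizer $\bar\om\in\Omega$ satisfying \eqref{Eq:Lyap Expo-algebraic weight} — has genuine gaps. First, your candidate $\bar\om=e^{c\Theta}$ with $\Theta=F(\fL)+M$ is never in $\Omega$ on groups of exponential growth (which includes free groups and most groups of interest here): since $\psi$ is $1$-Lipschitz, $\Psi(z)\lesssim z^2$, so $\Theta(s)\lesssim(\log(1+\fL(s)))^2$ grows subpolynomially in $\fL(s)$, and $\sum_s e^{-c\Theta(s)}$ diverges against an exponential ball-counting function no matter how large $c$ is. You flag this obstacle yourself, but the proposed fix (running de la Vall\'ee Poussin on $-\log\mu$ and ``absorbing'' it) is not carried out, and the de la Vall\'ee Poussin machinery is in any case the wrong tool: in this paper it is used for the \emph{opposite} purpose, namely to show in Theorem \ref{T:vanishing Lyap expo-log of lenght function} that length-based Lyapunov exponents \emph{vanish}. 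Second, and independently, even if you secured $\bar\om\in\Omega$, you give no argument for the reverse inequality $\Lc_{\bar\om}(G,\mu)\leq h(G,\mu)$; the ``chain of inequalities'' you invoke only yields $\Lc_{\bar\om}\geq h$, and a weight built from a length function has no pointwise relation to the distributions $\mu^{*n}$ that could produce the needed upper bound. Third, your claimed equivalence between \eqref{Eq:Lyap Expo-algebraic weight} and submultiplicativity of $\bar\om$ is wrong in the direction you need: submultiplicativity gives only the termwise bound $\bar\om^{-1}(s)\bar\om^{-1}(t)\leq\bar\om^{-1}(st)$, and summing this over the infinite fibre $\{(s,t):st=r\}$ gives nothing; \eqref{Eq:Lyap Expo-algebraic weight} is strictly stronger, and it is that inequality which \emph{implies} weak submultiplicativity, not conversely.

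The paper's construction resolves all of this at once by tying the weight directly to the random walk: set $\bar\om^{-1}=\sum_{n\geq1}\mu^{*n}/n^{3}$. Then $\bar\om^{-1}\in\ell^1(G)$ trivially, $\bar\om^{-1}>0$ everywhere by non-degeneracy, the convolution bound $\bar\om^{-1}*\bar\om^{-1}\leq 8C\,\bar\om^{-1}$ follows from the elementary estimate $n^{3}\leq 4\bigl(m^{3}+(n-m)^{3}\bigr)$ after reindexing the double sum (and this bound is what certifies that $\bar\om$ is a weight at all), and the crucial upper bound comes from the pointwise inequality $\bar\om^{-1}(s)\geq\mu^{*n}(s)/n^{3}$, which gives $\sum_s\mu^{*n}(s)\log\bar\om(s)\leq H(G,\mu^{*n})+3\log n$ and hence $\Lc_{\bar\om}(G,\mu)\leq h(G,\mu)$. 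If you want to salvage your outline, replace the length-function witness by a weight whose inverse dominates every $\mu^{*n}$ up to subexponential factors; that is exactly what the series above achieves.
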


\begin{proof}
Let $\om\in \Omega$. Recall that Gibbs' inequality states that the relative entropy of $\mu$ with respect to another probability measure $\tau$ on $G$ is positive: $D_{KL}(\mu \ || \ \tau)\geq 0$. Then, since $H_\omega(G,\mu) = -D_{KL}(\mu \ || \ \om^{-1})$ and $\om^{-1}$ is integrable on $G$, by putting $\tau=\om^{-1}/C$ with $C=\Vert \omega^{-1}\Vert_{1}$, we obtain
\begin{align}\label{Eq:claim 1}
H(G,\mu) \leq -\sum_{s\in G}\mu (s)\log \frac{\omega^{-1} (s)}{C} = \displaystyle\sum_{s\in G}\mu (s)\log \omega (s)+\log C.
\end{align}
In particular, $\mu$ has finite entropy. Also, if we replace $\mu $ with $\mu^{\ast n}$, then \eqref{Eq:claim 1} is still valid, so that we must have
$$H(G,\mu^{\ast n})\leq \sum_{s\in G}\mu^{*n}(s)\log \omega (s)+\log C , \qquad \forall ~ n\in \mathbb{N}.$$
Therefore,
$$h(G,\mu)=\displaystyle\lim_{n\rightarrow \infty}\dfrac{1}{n} H(G,\mu^{\ast n})\leq \Lc_\om(G,\mu).$$

Now we construct the weight satisfying \eqref{Eq:Lyap Expo-algebraic weight} and \eqref{Eq:Lyap Expo equals entropy}.
Define the function $\bar\om$ on $G$ by
\begin{align}\label{Eq:Reverse integrable weights}
  \bar\om^{-1}=\sum_{n=1}^{\infty} \frac{\mu^{*n}}{n^3}.
\end{align}
Then $\bar\om^{-1}\in \ell^1(G)$ and we will shortly see that $\bar\om$ is a weight on $G$. First, we observe that
\begin{align*}
  \bar\om^{-1} * \bar\om^{-1} &= \sum_{k,m=1}^{\infty} \frac{\mu^{*(k+m)}}{k^3 m^3}  \\
   & =  \sum_{n=2}^{\infty} \sum_{m=1}^{n-1} \frac{\mu^{*n}}{m^3 (n-m)^3} \\
   & =  \sum_{n=2}^{\infty}\mu^{*n} \sum_{m=1}^{n-1} \frac{1}{m^3 (n-m)^3} \\
   & \leq  \sum_{n=2}^{\infty}\frac{4\mu^{*n}}{n^3} \sum_{m=1}^{n-1} \bigg[\frac{1}{m^3}+\frac{1}{(n-m)^3}\bigg] \qquad \big(\text{as}\ n^3\leq 4(m^3+(n-m)^3)\big) \\
   &\leq  \sum_{n=2}^{\infty}\frac{4\mu^{*n}}{n^3}
   \sum_{m=1}^{\infty} \bigg[\frac{1}{m^3}+\frac{1}{m^3}\bigg]  \\
   &=8C \ \bar\om^{-1},
\end{align*}
where
$$C=\sum_{n=1}^{\infty} \frac{1}{n^3}.$$
Hence,
\begin{align*}
  \bar\om^{-1}*\bar\om^{-1} \leq 8C \ \bar\om^{-1}.
\end{align*}
This furthermore shows that
$$\bar\omega^{-1}(s)\bar\omega^{-1}(t)\leq (\bar\omega^{-1}*\bar\omega^{-1})(st)\leq 8C\bar\omega^{-1}(st)$$
for all $s,t\in G$, verifying that $\bar\omega$ is indeed a weight on $G$.
Finally, for $n \in \N$, set $B_n:=\supp \,\mu^{*n}$. Then
\begin{align*}
 - \sum_{s \in G} \mu^{*n}(s) \log \bar\om (s) & = \sum_{s \in G} \mu^{*n}(s) \log \bar\om^{-1}(s) \\
 & \geq  \sum_{s\in B_n} \mu^{*n}(s) \log \bigg(\frac{\mu^{*n}(s)}{n^3}\bigg) \\
 & =  \sum_{s\in B_n} \mu^{*n}(s) \log \mu^{*n}(s) - \sum_{s\in B_n} \mu^{*n}(s) \log n^3 \\
 & = -H(G,\mu^{*n}) - 3\log n.
\end{align*}
Therefore,
\begin{align*}
    \Lc_{\bar\om}(G,\mu) & = \displaystyle\lim_{n\rightarrow \infty}\dfrac{1}{n} \sum_{s \in G} \mu^{*n}(s) \log \bar\om(s) \\
    & \leq \displaystyle\lim_{n\rightarrow \infty}\dfrac{1}{n}  \bigg [H(G,\mu^{*n}) + 3\log n\bigg] \\
    &= h(G,\mu).
\end{align*}
This completes the proof.
\end{proof}

\begin{rem}\label{R:infinite sum-conv power of prob measure}
It is clear that the weight $\om$ constructed in \eqref{Eq:Reverse integrable weights} is not unique, as there are many possibilities for the coefficients. The idea of constructing a weight as such (and the construction in the proof of Theorem \ref{T:Lyap Expo dominates entropy-inverse integrable weights}) is given in \cite[Theorem 1.1]{Kuznets 1} in a more general setting. Interestingly, summations of convolution powers of a probability measure are repeatedly used in the theory of random walks for different reasons. Here is one situation that one considers: For a non-degenerate probability measure $\mu$ with finite entropy, it is shown in \cite[Section 3.2]{Kaim-Vers 1} that the entropy of the probability measure
$$\tilde{\mu}=\sum_{n=0}^{\infty} a_n\mu^{*n}, \;\; \text{with}\ \sum_{n=0}^{\infty} a_n=1, \;\; \sum_{n=1}^{\infty} n a_n<\infty \;\; \textrm{and} \;\; a_0\neq 0,$$ is given by
$$h(G,\tilde{\mu})=h(G,\mu) \; \sum_{n=1}^{\infty} na_n.$$
Moreover, if $(X,\xi)$ is a $(G,\mu)$-space (see Section \ref{sec:stationarydynamicalsystems}), then $(X,\xi)$ is also $\tilde{\mu}$-stationary (i.e.~a $(G,\tilde{\mu})$-space) if the numbers $a_n$ are chosen so that $\sum_{n=1}^{\infty} n a_n=1$. This, for instance, allows one to assume that $\mu$ is fully supported on the group by replacing $\mu$ with $\tilde{\mu}$.
\end{rem}

\section{Poisson boundaries and stationary spaces} \label{sec:stationarydynamicalsystems}

\subsection{$L^p$-Koopman representation}
Let $(X,\xi)$ be a standard probability space equipped with a measurable $G$-action. We assume the action to be non-singular (also called quasi-invariant), i.e.~for every $s\in G$, the measures $s\xi$ and $\xi$ are mutually absolutely continuous (where $(s\xi)(A)=\xi(s^{-1}A)$ for measurable subsets $A \subset X$). For $1 \leq p < \infty$, the {\textbf{$L^p$-Koopman representation}} of $G$ on $L^p(X,\xi)$ is the map
\begin{align*} 
    \pi_{p,X}: G\to B(L^p(X,\xi))
\end{align*}
given by
\begin{align*} 
[\pi_{p,X}(s)f](x)=\left[\frac{d(s\xi)}{d\xi}(x)\right]^{1/p} f(s^{-1}x)
\end{align*}
for all $s \in G$, $f\in L^p(X,\xi)$ and $\xi$-a.e.~$x$. Here,
$\frac{d(s\xi)}{d\xi}$ denotes the Radon-Nikodym derivative of $s\xi$ with respect to $\xi$.
For simplicity, we write
\begin{align*} 
    \rho(s,x)=\frac{d(s^{-1}\xi)}{d\xi}(x), \;\; s\in G, \; x\in X.
\end{align*}
It is well known that $\rho$ satisfies the following cocycle relation:
\begin{align*} 
    \rho(st,x)=\rho(s,tx)\rho(t,x) \;\; \textrm{for all} \;\; s,t\in G \textrm{ and } x\in X.
\end{align*}
By applying this relation, it is straightforward to verify that if $1 < p < \infty$ and $q$ is the conjugate exponent of $p$, then
\begin{align} \label{Eq:Lq Lp-Koopman rep-formula}
    \la \pi_{p,X}(s)f,g \ra =\la f,\pi_{q,X}(s^{-1})g \ra=\int_X \rho(s,x)^{1/q} f(x) \overline{g(sx)}d\xi(x)
\end{align}
for all $f\in L^p(X,\xi)$ and $g\in L^q(X,\xi)$. For simplicity, we write $\pi_X:=\pi_{2,X}$.

Following \cite{Nevo1}, we consider the function $\Xi_{X} : G \to [0,\infty)$ associated with $G \curvearrowright (X,\xi)$ defined as the diagonal matrix coefficient of $\pi_{X}$ with respect to the unit vector $1\in L^\infty(X,\xi)\subseteq L^2(X,\xi)$, i.e.
\begin{align} \label{Eq:Harich-Chandra function}
  \Xi_{X}(s):=\la \pi_{X}(s) 1, 1 \ra=\int_{X} \rho(s,x)^{1/2}  d\xi(x), \;\; s\in G.
\end{align}
The function $\Xi_{X}$ is a generalization of the \textbf{Harish-Chandra $\Xi$-function}. It is straightforward to see that $\Xi_X$ only attains non-negative values and that $\Xi_{X}(s^{-1})=\Xi_{X}(s)$ for all $s\in G$.

\subsection{Stationary dynamical systems}

As above, let $(X,\xi)$ be a standard probability space equipped with a non-singular $G$-action. Let $\mu$ be a non-degenerate probability measure on $G$. We say that $(X,\xi)$ is a \textbf{stationary} $(G,\mu)$\textbf{-space} (or that $(G,\mu) \curvearrowright (X,\xi)$ is a \textbf{stationary dynamical system}) if $\xi$ is a $\mu$-stationary measure, i.e.~$\mu \ast \xi = \xi$, where $\mu \ast \xi (A) = \sum_{s \in G} \mu(s) \xi(s^{-1}A)$ for measurable subsets $A \subset X$. (In fact, the assumption that the action is non-singular above is not necessary, as it holds automatically for stationary dynamical systems.) The notion of stationary dynamical system goes back to \cite{FurstenbergNCRP}, and we refer to \cite{Furs-Glas 1} for an overview.

The \textbf{Furstenberg entropy} of $(X,\xi)$ with respect to $(G,\mu)$, going back to \cite{FurstenbergNCRP} (see also \cite{NevoZimmer,Kaim-Vers 1}), is defined as
\begin{align*} 
  h_\mu(X,\xi)=- \sum_{s\in G} \mu(s) \int_X  \log \rho(s,x) d\xi(x).
\end{align*}
It is well known that $h_\mu(X,\xi)\geq 0$, with equality if and only if $\xi$ is $G$-invariant. Also, it was shown by Kaimanovich and Vershik \cite{Kaim-Vers 1} that if $\mu$ has finite Shannon entropy, then
\begin{align} \label{Eq:maximality of Furstenberg boundary}
h(G,\mu)=h_\mu(\Pi_{\mu},\nu_{\infty})\geq h_\mu(X,\xi),
\end{align}
where $(\Pi_{\mu},\nu_{\infty})$ is the Poisson boundary of $(G,\mu)$ and $(X,\xi)$ is an arbitrary $(G,\mu)$-stationary space. Moreover, for every $\mu$-boundary $(X,\xi)$ (see e.g.~\cite{Furs-Glas 1} for the terminology), the equality in \eqref{Eq:maximality of Furstenberg boundary} occurs if and only if there is a measurable $G$-equivariant isomorphism from $(X,\xi)$ onto $(\Pi_{\mu},\nu_{\infty})$.

The Furstenberg entropy is a powerful invariant. In the following proposition, we show that for non-degenerate probability measures, it can be computed as an asymptotic limit of matrix coefficients of the form $\la \pi_{p,X}(\mu) 1, 1 \ra$ (where $\pi_{p,X}(\mu)f = \sum_{s \in G} \mu(s) \pi_{p,X}(s)f$). The formula is essential for our approach and it resembles the one for $h(G,\mu)$ established in \eqref{Eq:Avez-entropy-p conv operators-RD}.

\begin{prop} \label{P:asymptote Lp Harish-Chandra function to entropy}
Let $(X,\xi)$ be a $(G,\mu)$-stationary space. Then the mapping
\begin{align} \label{Eq:asymptotic Lp Harish-Chandra function}
[2, \infty )\rightarrow [0,\infty ), \;\; p \mapsto -p\log \la \pi_{q,X}(\mu) 1, 1 \ra,
\end{align}
where $q$ is the conjugate exponent of $p$, is an increasing function with
\begin{align}\label{Eq:asymptotic Lp Harish-Chandra function to entropy}
\lim_{p\to \infty} -p\log \la \pi_{q,X}(\mu) 1, 1 \ra=h_\mu(X,\xi).
\end{align}
\end{prop}

\begin{proof}
  Let $2\leq p_1\leq p_2<\infty$, and let $q_1$ and $q_2$ be their conjugate exponents, respectively.
Then, by \eqref{Eq:Lq Lp-Koopman rep-formula} and Jensen's inequality, we have
  \begin{align*}
     \la \pi_{q_2,X}(s) 1, 1 \ra^{\frac{p_2}{p_1}} & =
     \bigg(\int_{X} \rho(s,x)^{1/p_2} \, d\xi(x)\bigg)^{\frac{p_2}{p_1}} \\
     & \leq\int_{X} \big(\rho(s,x)^{1/p_2}\big)^{\frac{p_2}{p_1}}\,
     d\xi(x) \\ &= \la \pi_{q_1,X}(s) 1, 1 \ra.
  \end{align*}
  Hence,
  $$ \la \pi_{q_2,X}(s) 1, 1 \ra^{p_2} \leq  \la \pi_{q_1,X}(s) 1, 1 \ra^{p_1} \;\; \textrm{for all} \;\; s\in G.$$
By taking minus the logarithm and summing with respect to $\mu$, we see that the mapping in \eqref{Eq:asymptotic Lp Harish-Chandra function} is increasing on $[2,\infty)$.
Moreover, by applying Lemma \ref{R:derivative log-power} to the probability space $(G\times X, \mu\times \xi)$ and the function $\rho$, we obtain
\begin{align*}
\lim_{p\to \infty} -p\log \la \pi_{q,X}(\mu) 1, 1 \ra &=  \lim_{\alpha\to 0^+} -\frac{1}{\alpha} \log \displaystyle\int_{G\times X} \rho(s,x)^\alpha \, d(\mu\times \xi)(s,x)\\
& = -\int_{G\times X} \log \rho(s,x) \, d(\mu\times \xi)(s,x).
\end{align*}
\end{proof}

We are now ready to present one of the main results of this article. We show that for a group with rapid decay,
the sufficiently fast decay of the function $\Xi_{X}$ (with respect to the length function) associated with $(X,\xi)$ ensures that the Furstenberg entropy of $(X,\xi)$ must be equal to the Avez entropy of the $\mu$-random walk.

\begin{thm} \label{T:Square integrable-inverse Harich Chandra function}
Let $G$ be a countable group satisfying property RD with respect to a length function $\mathcal{L}$, and let $\mu$ be a non-degenerate probability measure on $G$ with both finite entropy and finite $\log(1+\fL)$-moment. Let $(X,\xi)$ be a $(G,\mu)$-stationary space such that $\pi_X$ is weakly contained in the left-regular representation $\lambda_G$. Then
\begin{align*}
h_\mu(X,\xi)=h(G,\mu)=-2\lim_{n\rightarrow \infty}\dfrac{1}{n}\sum_{s\in G}\mu^{\ast n}(s) \log \Xi_X(s).
\end{align*}
\end{thm}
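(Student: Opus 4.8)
The plan is to show that the number $-2\lim_{n\to\infty}\tfrac1n\sum_{s\in G}\mu^{*n}(s)\log\Xi_X(s)$ exists and equals both $h(G,\mu)$ and $h_\mu(X,\xi)$, by a squeezing argument. Set $a_n:=-\tfrac2n\sum_{s}\mu^{*n}(s)\log\Xi_X(s)$, which is $\ge0$ since $0<\Xi_X\le1$. I will establish the two estimates
\[
h(G,\mu)\ \le\ \liminf_{n\to\infty}a_n\qquad\text{and}\qquad a_n\ \le\ h_\mu(X,\xi)\ \ (n\in\N);
\]
together with $h_\mu(X,\xi)\le h(G,\mu)$ from \eqref{Eq:maximality of Furstenberg boundary}, these force $h(G,\mu)\le\liminf_n a_n\le\limsup_n a_n\le h_\mu(X,\xi)\le h(G,\mu)$, so that all four quantities coincide and $\lim_n a_n$ exists.

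The analytic core is a weighted square-summability estimate for $\Xi_X$. Fix $d\in\N$ as in Definition \ref{D:RD}, so that $\om(s):=(1+\fL(s))^d$ satisfies $\|\lambda_G(f)\|_{C^*_r(G)}\le K\|f\|_{2,\om}$ for some $K>0$ and all $f\in\C[G]$; combined with the weak containment $\|\pi_X(f)\|_{B(L^2(X,\xi))}\le\|\lambda_G(f)\|_{C^*_r(G)}$, this gives $\|\pi_X(f)\|\le K\|f\|_{2,\om}$ on $\C[G]$. I would test this on $f=\Xi_X\,\om^{-2}\,\mathbf 1_F$ for a finite $F\subseteq G$: using $\langle\pi_X(f)1,1\rangle=\sum_{s\in F}\Xi_X(s)^2\om(s)^{-2}$, $\|1\|_{L^2(X,\xi)}=1$, and $\|f\|_{2,\om}=\bigl(\sum_{s\in F}\Xi_X(s)^2\om(s)^{-2}\bigr)^{1/2}$, one gets $\sum_{s\in F}\Xi_X(s)^2\om(s)^{-2}\le K\bigl(\sum_{s\in F}\Xi_X(s)^2\om(s)^{-2}\bigr)^{1/2}$, whence, letting $F\uparrow G$,
\[
C:=\sum_{s\in G}\Xi_X(s)^2\,\om(s)^{-2}\ \le\ K^2<\infty
\]
(and $C>0$ since $\Xi_X(e)=1$). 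Equivalently, $\Xi_X$ represents a bounded functional on $\ell^2(G,\om)$, i.e.\ $\Xi_X\in\ell^2(G,\om^{-1})$ — this is the ``square-integrable inverse Harish--Chandra function'' of the title. Hence $\tau(s):=C^{-1}\Xi_X(s)^2\om(s)^{-2}$ is a probability measure on $G$, fully supported because $\Xi_X>0$ everywhere.

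With $\tau$ in hand I would run the Gibbs-inequality argument from the proof of Theorem \ref{T:Lyap Expo dominates entropy-inverse integrable weights}: from $D_{\mathrm{KL}}(\mu^{*n}\,\|\,\tau)\ge0$,
\[
H(G,\mu^{*n})\ \le\ -\sum_{s}\mu^{*n}(s)\log\tau(s)\ =\ n\,a_n\ +\ 2\sum_{s}\mu^{*n}(s)\log\om(s)\ +\ \log C .
\]
All terms are finite: $\sum_{s}\mu^{*n}(s)\log\om(s)=d\sum_{s}\mu^{*n}(s)\log(1+\fL(s))<\infty$, because $\log(1+\fL)$ is subadditive and $\mu$ has finite $\log(1+\fL)$-moment; and $n\,a_n\ge0$ is finite because concavity of $\log$ applied to $\Xi_X(s)=\int_X\rho(s,x)^{1/2}d\xi(x)$ yields $-2\log\Xi_X(s)\le-\int_X\log\rho(s,x)\,d\xi(x)$, so $n\,a_n\le-\sum_s\mu^{*n}(s)\int_X\log\rho(s,x)\,d\xi(x)=h_{\mu^{*n}}(X,\xi)=n\,h_\mu(X,\xi)<\infty$ (using the standard cocycle identity $h_{\mu^{*n}}(X,\xi)=n\,h_\mu(X,\xi)$ for the $\mu$-stationary $\xi$, together with $h_\mu(X,\xi)\le h(G,\mu)<\infty$). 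The same chain gives $a_n\le h_\mu(X,\xi)$ for every $n$, the second estimate above. Dividing the display by $n$ and letting $n\to\infty$, the left side converges to $h(G,\mu)$, the term $\tfrac1n\log C$ vanishes, and $\tfrac1n\sum_s\mu^{*n}(s)\log\om(s)\to\Lc_\om(G,\mu)=d\,\Lc_{1+\fL}(G,\mu)=0$ by Theorem \ref{T:vanishing Lyap expo-log of lenght function}; this gives $h(G,\mu)\le\liminf_n a_n$, and the squeeze closes.

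The main obstacle is exactly the weighted square-summability $\sum_{s}\Xi_X(s)^2\om(s)^{-2}<\infty$: this is the one place where property RD enters essentially, converting the soft hypothesis $\pi_X\prec\lambda_G$ into a quantitative decay of the diagonal coefficient $\Xi_X$ along $\fL$. Everything afterwards is a robust Gibbs/Jensen sandwich needing nothing beyond the finite $\log(1+\fL)$-moment assumption; in particular $\mu$ need not lie in any weighted $\ell^q$-space, so neither Theorem \ref{T:compare weighted and non-weighted Shannon entropy} nor Theorem \ref{T:Avez entropy-p conv operators-RD} is invoked. (Alternatively one could argue via Proposition \ref{P:asymptote Lp Harish-Chandra function to entropy} and the $p$-pseudofunction estimates, but that route requires stronger moment hypotheses and is less direct.)
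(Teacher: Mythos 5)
Your proposal is correct and follows essentially the same route as the paper: property RD plus weak containment gives $\sum_{s}\Xi_X(s)^2(1+\fL(s))^{-2d}<\infty$, the Gibbs inequality applied to the normalized measure $\Xi_X^2\om^{-2}$ yields $h(G,\mu)\le\liminf_n a_n$ after the Lyapunov term is killed by Theorem \ref{T:vanishing Lyap expo-log of lenght function}, Jensen gives $a_n\le h_\mu(X,\xi)$, and \eqref{Eq:maximality of Furstenberg boundary} closes the squeeze. The only (welcome) difference is that you prove the weighted square-summability of $\Xi_X$ directly by testing the RD inequality on $\Xi_X\om^{-2}\mathbf 1_F$, where the paper instead cites the known matrix-coefficient characterization of property RD.
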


\begin{proof}
Since $G$ has property RD with respect to $\fL$, there exists $d>0$ such that for the weight $\om$ on $G$ given by
$\omega(s)=(1+\fL(s))^d$ for $s\in G$, we have $\ell^2(G,\om)\subseteq C^*_r(G)$.
This, together with the fact that $\pi_X$ is weakly contained in $\lambda_G$, implies that the matrix coefficients of $\pi_X$ lie inside $\ell^2(G,\om^{-1})$. (This is related to a characterization of property RD in terms of the matrix coefficients of $\lambda_G$, which is stated as $RD(6)$ in \cite{Chatt 1} and attributed to Breuillard.) In particular, by \eqref{Eq:Harich-Chandra function}, we have that $\Xi_X\in \ell^2(G,\om^{-1})$, or equivalently,
\begin{align} \label{Eq:square invertible-Harich Chandra function}
 \frac{\Xi_{X}}{(1+\fL)^{d}}\in \ell^{2}(G).
\end{align}
Let
\begin{align} 
  \sg:=\frac{(1+\fL)^{d}}{\Xi_{X}}.
\end{align}
Although we do not know whether $\sg$ is a weight on $G$, a careful examination of the proof of the relation \eqref{Eq:claim 1} shows that the integrability of $\sg^{-2}$ (which follows from \eqref{Eq:square invertible-Harich Chandra function}) suffices to imply that
$$H(G,\mu^{\ast n})\leq \sum_{s\in G}\mu^{*n}(s)\log \sg^2 (s)+D \qquad \forall n\in \mathbb{N},$$
where $D=\log\|\sg^{-2}\|_1$. Therefore,
\begin{align*}
  h(G,\mu) &= \inf_{n\in \N} \left\{\frac{1}{n} H(G,\mu^{\ast n}) \right\} \\
  & \leq 2 \inf_{n\in \N} \left\{\dfrac{1}{n}\sum_{s\in G}\mu^{*n}(s)\log \sg (s)\right\}  \\
  & =2 \inf_{n\in \N} \left\{\dfrac{d}{n}\sum_{s\in G}\mu^{*n}(s)\log (1+\fL(s))-  \dfrac{1}{n}\sum_{s\in G}\mu^{\ast n}(s) \log \Xi_X(s) \right\} \\
  & \leq 2 \liminf_{n\to \infty} \left[\dfrac{d}{n}\sum_{s\in G}\mu^{*n}(s)\log (1+\fL(s))-  \dfrac{1}{n}\sum_{s\in G}\mu^{\ast n}(s) \log \Xi_X(s) \right].
  \end{align*}
  However, by Theorem \ref{T:vanishing Lyap expo-log of lenght function},
$$\Lc_{1+\fL}(G,\mu)=0.$$
Hence,
\begin{align*}
 h(G,\mu) & \leq  2 \liminf_{n\to \infty} \left[\dfrac{1}{n}\sum_{s\in G}\mu^{\ast n}(s) (-\log) \Xi_X(s) \right] \\
  & \leq 2 \limsup_{n\to \infty} \left[\dfrac{1}{n}\sum_{s\in G}\mu^{\ast n}(s) (-\log) \Xi_X(s) \right] \\
   &\leq 2\limsup_{n\rightarrow \infty}\dfrac{1}{n}\sum_{s\in G}\mu^{\ast n}(s) \int_X (-\log) \big[\rho(s,x)^{1/2}\big] d\xi(x) \ \ (\text{as} -\log\ \text{is convex}) \\
    &=2\lim_{n\rightarrow \infty}\dfrac{1}{n}\sum_{s\in G}\mu^{\ast n}(s) \int_X \bigg(-\frac{1}{2}\bigg)
    \log \rho(s,x) d\xi(x) \\
  &=2 \lim_{n\rightarrow \infty} \frac{1}{2n} h_{\mu^{*n}}(X,\xi)\\
    &= h_\mu(X,\xi) \\
    &\leq h(G,\mu).
  \end{align*}
Therefore,
 $$h_\mu(X,\xi)=h(G,\mu)=-2\lim_{n\rightarrow \infty}\dfrac{1}{n}\sum_{s\in G}\mu^{\ast n}(s) \log \Xi_{X}(s).$$
\end{proof}

We now apply the preceding theorem to characterize amenable actions of groups with property RD for the class of measures considered in Theorem \ref{T:Square integrable-inverse Harich Chandra function}.
Let us first recall the following definitions.

\begin{defn} \label{D:factors}
Let $(X,\xi)$ and $(Y,\eta)$ be measurable spaces, and let $\phi \colon (X,\xi)\to (Y,\eta)$ be a measurable function. The push-forward measure $\phi_*(\xi)$ of $\xi$ under $\phi$ is the measure on $Y$ defined by
\begin{align*}
  \phi_*(\xi)(A)= \xi(\phi^{-1}(A)), \qquad A \subset Y \ \text{measurable}.
\end{align*}
If, in addition, $(X,\xi)$ and $(Y,\eta)$ are $G$-spaces and $\phi$ is $G$-equivariant with $\eta=\phi_*(\xi)$, then $\phi$ is called a \textbf{factor}, and $(X,\xi)$ is called an \textbf{extension} of $(Y,\eta)$. Suppose that $(X,\xi)$ and $(Y,\eta)$ are measurable $G$-spaces and $\phi \colon (X,\xi)\to (Y,\eta)$ is a factor. Then $\phi$ implies the \textbf{unique disintegration} of $\xi$ with respect to $\eta$. More precisely, there exists a unique measurable map $D:Y\to P(X)$ (where $P(X)$ denotes the space of probability measures on $X$) such that for $\eta$-a.e.~$y\in Y$, the measure $\xi_y:=D(y)$ is a probability measure supported on the fiber $\phi^{-1}(y)$ in $X$ and has the property that the barycenter of the measure
$D_*(\eta)$ on $P(X)$ is $\xi$. In this case, we write
$\xi=\displaystyle\int_{Y} \xi_y \, d\eta(y)$ and this decomposition is unique
up to $\eta$-null sets. We say that the map $\phi$ is \textbf{measure-preserving} if $D$ is $G$-equivariant, or equivalently, for every $s \in G$, we have $s\xi_y=\xi_{sy}$ for $\eta$-a.e.~$y$.
We refer the reader to \cite[Section 2]{badershalom} and \cite[Definition 1.7]{NevoZimmer} for further details.
\end{defn}

It is well known that the factor $\phi:(X,\xi)\to (Y,\eta)$ is measure preserving if and only if $h_\mu(X,\xi)=h_\mu(Y,\eta)$; see \cite[Theorem 1.3.(2)]{Furs-Glas 1}.

\begin{thm}\label{T:charc amen action-RD groups}
Let $G$ be a countable group satisfying property RD with respect to a length function $\fL$, let $\mu$ be a non-degenerate probability measure on $G$ with both finite entropy and finite $\log(1+\fL)$-moment, and let $(X,\xi)$ be a stationary $(G, \mu)$-space. Then the following are equivalent:
\begin{enumerate}[(i)]
    \item The space $(X,\xi)$ is a measure-preserving extension of the Poisson boundary of $(G,\mu)$.
    \item The action $G \curvearrowright (X,\xi)$ is amenable (in the sense of Zimmer).
    \item The representation $\pi_{X}$ is weakly contained in $\lambda_G$.
    \item $h_\mu(X,\xi)=h(G,\mu)$.
\end{enumerate}
\end{thm}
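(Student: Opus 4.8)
The plan is to prove the cycle of implications (i) $\Rightarrow$ (ii) $\Rightarrow$ (iii) $\Rightarrow$ (iv) $\Rightarrow$ (i). Three of these four arrows are classical or already available in the literature; the only one carrying genuinely new content is (iii) $\Rightarrow$ (iv), which is exactly Theorem \ref{T:Square integrable-inverse Harich Chandra function} and is where property RD is used in an essential way.

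For (i) $\Rightarrow$ (ii), I would use the classical fact that the Poisson boundary $(\Pi_\mu,\nu_\infty)$, being a $\mu$-boundary, is an amenable $G$-space in the sense of Zimmer (see e.g.~\cite{Furs-Glas 1,Ana-Dela 1}), combined with the stability of Zimmer amenability under $G$-equivariant factors. Concretely, if $\phi : (X,\xi) \to (\Pi_\mu,\nu_\infty)$ is the factor map from (i) and $(m_i)_i$ is a net of measurable maps $\Pi_\mu \to \mathrm{Prob}(G)$ witnessing amenability of $(\Pi_\mu,\nu_\infty)$, then $(m_i \circ \phi)_i$ witnesses amenability of $(X,\xi)$: writing $\xi = \int_{\Pi_\mu} \xi_y\, d\nu_\infty(y)$ for the disintegration of $\xi$ over $\phi$ and using that each $\xi_y$ is a probability measure and that $\phi$ is $G$-equivariant, the $L^1(X,\xi)$-integrals expressing approximate $G$-invariance of $m_i\circ\phi$ reduce to the corresponding $L^1(\Pi_\mu,\nu_\infty)$-integrals for $m_i$, which tend to $0$. (Only $G$-equivariance of $\phi$ is needed here, not the measure-preserving property.)

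The implication (ii) $\Rightarrow$ (iii) is \cite[Theorem 4.3.1]{Ana-Dela 1}: Zimmer amenability of $G \curvearrowright (X,\xi)$ forces the Koopman representation $\pi_X$ on $L^2(X,\xi)$ to be weakly contained in $\lambda_G$. The implication (iii) $\Rightarrow$ (iv) is precisely Theorem \ref{T:Square integrable-inverse Harich Chandra function}, whose hypotheses coincide with those of the present theorem (property RD with respect to $\fL$, and $\mu$ non-degenerate with finite entropy and finite $\log(1+\fL)$-moment); its conclusion gives in particular $h_\mu(X,\xi)=h(G,\mu)$. This is the substantive step, and the delicate points --- the property-RD estimate placing $\Xi_X/(1+\fL)^d$ in $\ell^2(G)$ and the vanishing $\Lc_{1+\fL}(G,\mu)=0$ coming from Theorem \ref{T:vanishing Lyap expo-log of lenght function} --- have already been settled in its proof.

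Finally, for (iv) $\Rightarrow$ (i) I would invoke \cite[Theorem 4.4]{Furs-Glas 1}: the stationary space $(X,\xi)$ admits a maximal $\mu$-boundary $(B,\beta)$ as a $G$-factor, the factor map $(X,\xi)\to(B,\beta)$ is measure-preserving, and $h_\mu(X,\xi)=h_\mu(B,\beta)$. Since $\mu$ has finite Shannon entropy, \eqref{Eq:maximality of Furstenberg boundary} gives $h_\mu(B,\beta)\leq h(G,\mu)=h_\mu(\Pi_\mu,\nu_\infty)$, and the hypothesis $h_\mu(X,\xi)=h(G,\mu)$ forces equality here; the equality case in \eqref{Eq:maximality of Furstenberg boundary} then identifies $(B,\beta)$ $G$-equivariantly with the Poisson boundary, so that $(X,\xi)$ is a measure-preserving extension of $(\Pi_\mu,\nu_\infty)$, i.e.~(i) holds. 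The main obstacle of the whole argument is therefore confined to (iii) $\Rightarrow$ (iv), which is exactly why it was isolated as Theorem \ref{T:Square integrable-inverse Harich Chandra function}; the remaining care is merely to line up the integrability and entropy hypotheses across the cited results and to note that (i) $\Rightarrow$ (ii) needs only $G$-equivariance of the factor.
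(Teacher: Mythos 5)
Your proof is correct and takes essentially the same route as the paper: the identical cycle (i) $\Rightarrow$ (ii) $\Rightarrow$ (iii) $\Rightarrow$ (iv) $\Rightarrow$ (i), with (ii) $\Rightarrow$ (iii) from \cite[Theorem 4.3.1]{Ana-Dela 1}, (iii) $\Rightarrow$ (iv) from Theorem \ref{T:Square integrable-inverse Harich Chandra function}, and (iv) $\Rightarrow$ (i) from \cite[Theorem 4.4]{Furs-Glas 1}. The only cosmetic difference is in (i) $\Rightarrow$ (ii), where the paper cites the fact that measure-preserving extensions of amenable actions are amenable (\cite[Corollary C]{Adam-Elliot-Gior}) instead of running your Reiter-pullback argument; your parenthetical claim that only $G$-equivariance of the factor is needed is harmless here, since hypothesis (i) supplies the measure-preserving property in any case.
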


\begin{proof}
(i) $\Rightarrow$ (ii): It is well known that groups acts amenably on their Poisson boundary \cite{Zimm 2}. Hence, the implication follows from the fact that measure-preserving extensions of amenable actions are amenable \cite[Corollary C]{Adam-Elliot-Gior} (see \cite[Theorem 2.4]{Zimm 2} for the special case of ergodic actions and \cite[Theorem 9.2]{nevosageev} for a characterization of the Poisson boundary as the unique minimal amenable $(G,\mu)$-stationary space).

(ii) $\Rightarrow$ (iii): This is a general consequence of the action being amenable \cite[Theorem 4.3.1]{Ana-Dela 1}.

(iii) $\Rightarrow$ (iv): This follows directly from Theorem \ref{T:Square integrable-inverse Harich Chandra function}.

(iv) $\Rightarrow$ (i): This follows from \cite[Theorem 4.4]{Furs-Glas 1}.
\end{proof}

The above theorem does not hold in general for groups without property RD. For instance, for amenable groups $G$, assertions (ii) and (iii) always holds, whereas assertion (iv) is not automatic.

\begin{rem}
We now present concrete examples of groups (without property RD) for which Theorem \ref{T:charc amen action-RD groups} fails.
Let $G$ be a finitely generated solvable group of exponential growth (e.g.~the Lamplighter group), and let $S$ be a finite symmetric generating subset of $G$. The group $G$ does not satisfy property RD, as amenable groups with property RD must have polynomial growth \cite{Joli}. Let $\mu=\frac{1}{|S|}1_S$, i.e.~the uniform probability measure on $S$. It is well known that $S$ must contain two elements that generate a free subsemigroup. In particular, this implies that the Avez entropy of $\mu$ is nonzero. However, the trivial action of $G$ on any singleton set $\{ \ast \}$ is $G$-invariant, so that its Furstenberg entropy is $0$; that is, assertion (iv) of Theorem \ref{T:charc amen action-RD groups} fails. On the other hand, as $G$ is amenable, its action on $\{ \ast \}$ is also amenable, so that assertions (ii) and (iii) hold.
\end{rem}

The following is an immediate corollary of the preceding theorem, which can be viewed as a ``representation theoretic version'' of the approach taken in \cite{Kaim 1}.

\begin{cor} \label{C:charc entopry-RD groups}
Suppose that $G$ has property RD with respect to $\fL$ and that $\mu$ is a non-degenerate probability measure on $G$ with both finite entropy and finite $\log (1+\fL)$-moment. Let $(X,\xi)$ be a $\mu$-boundary. Then the following are equivalent:
\begin{enumerate}[(i)]
    \item The action $G \curvearrowright (X,\xi)$ is amenable (in the sense of Zimmer).
    \item The representation $\pi_{X}$ is weakly contained in $\lambda_G$.
    \item The space $(X,\xi)$ is the Poisson boundary of $(G,\mu)$.
\end{enumerate}
\end{cor}

\begin{proof}
This corollary follows directly from Theorem \ref{T:charc amen action-RD groups} together with the fact that for a $\mu$-boundary $(X,\xi)$, we have
$h_\mu(X,\xi)=h(G,\mu)$ if and only if $(X,\xi)$ is the Poisson boundary of $(G,\mu)$
(see \cite[Theorem 1.3.(3)]{Furs-Glas 1}).
\end{proof}

\section{Convolution entropy} \label{S:convolution entropy}

\subsection{Avez entropy as a convolution-type entropy }\label{S:Avez entopy vs convolution-type entropy} 
In this section, we show how the entropy of a $\mu$-random walk on a group $G$ relates to the asymptotic behavior of the spectral radius of the random walk in the Banach algebra of $p$-pseudofunctions. Moreover, for groups with property RD, we prove that the entropy of the random walk can be computed in terms of this asymptotic behavior.

For $1 < p < \infty$, let $\lambda_p : \ell^1(G)\to B(\ell^p(G))$ be the left-regular representation, i.e.~the representation defined by
$$ \lambda_p(f)g=f*g.$$
The representation $\lambda_p$ is a contractive representation and the norm closure of $\lambda_p(\ell^1(G))$ inside $B(\ell^p(G))$ is called the algebra $\pf_p(G)$ of \textbf{$p$-pseudofunctions}, which goes back to \cite{Herz}. For simplicity, we write $\lambda=\lambda_2$, which is the left-regular representation of $\ell^1(G)$ on $\ell^2(G)$. By interpolation (see Section \ref{subsec:weightedgroupalgebras}), we have
\begin{align}\label{Eq:complex interpolation-convlution operators-norm relations}
  \|f\|_{\pf_u(G)}\leq \|f\|_1^{1-\theta}\|f\|_{\pf_p(G)}^\theta, \,\, \text{with} \,\, \theta=\frac{u}{p},
\end{align}
for $1\leq u\leq p$ and $f\in \ell^1(G)$.

The following result states that a certain asymptotic behavior of a probability measure $\mu$ in $\pf_p(G)$ is dominated by the entropy of the $\mu$-random walk.

\begin{thm}\label{T:Avez entropy-p conv operators-I}
Let $\mu $ be a probability measure on $G$ with finite entropy. Then the mapping
\begin{align}\label{Eq:asymptote Lp conv operators}
[2,\infty )\rightarrow [0,\infty ), \;\; p \mapsto -p\log r_{\pf_q(G)}(\mu),
\end{align}
where $\frac{1}{p}+\frac{1}{q}=1$, is increasing and bounded on $[2,\infty)$, and
\begin{align*} 
 \displaystyle\lim_{p\rightarrow \infty}-p\log r_{\pf_q(G)}(\mu) \leq h(G,\mu).
\end{align*}
\end{thm}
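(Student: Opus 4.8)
The plan is to work with the Gelfand formula $r_{\pf_q(G)}(\mu)=\lim_{m\to\infty}\|\mu^{\ast m}\|_{\pf_q(G)}^{1/m}$ (valid since $\lambda_q$ is a homomorphism, so $\lambda_q(\mu)^m=\lambda_q(\mu^{\ast m})$ and $\|\mu^{\ast m}\|_{\pf_q(G)}$ is submultiplicative in $m$), together with the elementary two-sided estimate
$$\|f\|_q\le\|f\|_{\pf_q(G)}\le\|f\|_1\qquad(f\in\ell^1(G)),$$
where the right inequality is Young's inequality (contractivity of $\lambda_q$) and the left one comes from applying $\lambda_q(f)$ to $\delta_e$. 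Since each $\mu^{\ast m}$ is a probability measure, the right inequality gives $r_{\pf_q(G)}(\mu)\le\|\mu\|_{\pf_q(G)}\le 1$, hence $-p\log r_{\pf_q(G)}(\mu)\ge 0$. Recall also that $h(G,\mu)=\inf_m\frac1m H(G,\mu^{\ast m})\le H(G,\mu)<\infty$ because $\mu$ has finite entropy.

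I would first prove the bound $-p\log r_{\pf_q(G)}(\mu)\le h(G,\mu)$ for \emph{every} $p\in[2,\infty)$, which simultaneously forces $r_{\pf_q(G)}(\mu)>0$ and hence that the quantity is finite. The point is a lower bound on $r_{\pf_q(G)}(\mu)$: from the left inequality above and the Gelfand formula, $r_{\pf_q(G)}(\mu)\ge\liminf_{m\to\infty}\|\mu^{\ast m}\|_q^{1/m}$. For fixed $m$, Proposition \ref{prop:interpolationconsequence} applied to $f=\mu^{\ast m}$ with the trivial weight (note $\mu^{\ast m}\in\ell^1(G)\cap\ell^{q_0}(G)$ with $\|\mu^{\ast m}\|_1=1$) shows that $p\mapsto-p\log\|\mu^{\ast m}\|_q$ is increasing on $[2,\infty)$, and by the R\'enyi-entropy asymptotic recalled just before Theorem \ref{T:compare weighted and non-weighted Shannon entropy} its limit as $p\to\infty$ is $H(G,\mu^{\ast m})$; hence $-p\log\|\mu^{\ast m}\|_q\le H(G,\mu^{\ast m})$ for all $p\in[2,\infty)$. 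Dividing by $mp$, taking $\limsup_{m\to\infty}$ and using $\lim_m\frac1m H(G,\mu^{\ast m})=h(G,\mu)$, one gets $\liminf_m\|\mu^{\ast m}\|_q^{1/m}\ge e^{-h(G,\mu)/p}$, so $r_{\pf_q(G)}(\mu)\ge e^{-h(G,\mu)/p}$ and therefore $-p\log r_{\pf_q(G)}(\mu)\le h(G,\mu)$.

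Next, for monotonicity, fix $2\le p_1\le p_2$ with conjugate exponents $1<q_2\le q_1\le 2$. Applying the interpolation estimate \eqref{Eq:complex interpolation-convlution operators-norm relations} to $f=\mu^{\ast m}$ with the role of $u$ played by $q_2$ and that of $p$ by $q_1$ (so $1\le q_2\le q_1$), and using $\|\mu^{\ast m}\|_1=1$, gives $\|\mu^{\ast m}\|_{\pf_{q_2}(G)}\le\|\mu^{\ast m}\|_{\pf_{q_1}(G)}^{q_2/q_1}$; taking $m$-th roots and letting $m\to\infty$ yields $r_{\pf_{q_2}(G)}(\mu)\le r_{\pf_{q_1}(G)}(\mu)^{q_2/q_1}$. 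Raising to the power $p_2$, and using that $p_2q_2\ge p_1q_1$ (because $pq=p+q$ is an increasing function of $p$ on $[2,\infty)$) together with $0<r_{\pf_{q_1}(G)}(\mu)\le1$, one obtains $r_{\pf_{q_2}(G)}(\mu)^{p_2}\le r_{\pf_{q_1}(G)}(\mu)^{p_1}$, i.e.\ $-p_2\log r_{\pf_{q_2}(G)}(\mu)\ge-p_1\log r_{\pf_{q_1}(G)}(\mu)$. Thus the map \eqref{Eq:asymptote Lp conv operators} is increasing on $[2,\infty)$; being also bounded above by $h(G,\mu)<\infty$ (previous paragraph), it is bounded, the limit as $p\to\infty$ exists, and $\lim_{p\to\infty}-p\log r_{\pf_q(G)}(\mu)\le h(G,\mu)$.

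The main obstacle is precisely the lower-bound step: it is tempting to estimate $r_{\pf_q(G)}(\mu)$ from above by a single $\|\mu^{\ast n}\|_{\pf_q(G)}^{1/n}$, or to use $\|\mu^{\ast m}\|_{\pf_q(G)}\le\|\mu^{\ast m}\|_1=1$, but both inequalities go the wrong way. What makes it work is passing to the Shannon entropy of $\mu^{\ast m}$ \emph{before} letting $m\to\infty$, so that the subadditive limit $h(G,\mu)=\lim_m\frac1m H(G,\mu^{\ast m})$ — rather than $H(G,\mu)$ — appears on the right-hand side. Everything else reduces to the interpolation inequality \eqref{Eq:complex interpolation-convlution operators-norm relations}, Proposition \ref{prop:interpolationconsequence}, the R\'enyi-entropy asymptotic for the Shannon entropy, and the elementary observation that $p\mapsto p+q$ is increasing on $[2,\infty)$.
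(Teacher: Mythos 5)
Your proposal is correct and uses exactly the same ingredients as the paper's proof: the bound $\|\mu^{\ast m}\|_q\le\|\mu^{\ast m}\|_{\pf_q(G)}$ obtained by applying $\lambda_q(\mu^{\ast m})$ to $\delta_e$, the R\'enyi-entropy asymptotic relating $-p\log\|\mu^{\ast m}\|_q$ to $H(G,\mu^{\ast m})$, submultiplicativity/the spectral radius, and the interpolation inequality \eqref{Eq:complex interpolation-convlution operators-norm relations} for monotonicity. The only difference is the order of the limits (you fix $p$ and let $m\to\infty$, while the paper first lets $p\to\infty$ to get $H(G,\mu^{\ast n})\ge nA(G,\mu)$ and then replaces $\mu$ by $\mu^{\ast n}$), which does not change the substance of the argument.
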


We call the limit $\lim_{p\rightarrow \infty}-p\log r_{\pf_q(G)}(\mu)$ the \textbf{convolution entropy} of the $\mu$-random walk, denoted by $c(G,\mu)$ (see Definition \ref{d:convolutionentropy}).

\begin{proof}
For every $1<q<p<\infty $ with $\dfrac{1}{p}+\dfrac{1}{q}=1$ and $n\in \mathbb{N}$, we have
$$\Vert \mu^{\ast n}\Vert_{q}^{-p} = \Vert \mu^{\ast n} \ast \delta_e \Vert_{q}^{-p} \geq \Vert \mu^{\ast n}\Vert_{\pf_q(G)}^{-p}\geq \Vert \mu \Vert_{\pf_q(G)}^{-np} .$$
Hence,
$$-p\log \Vert \mu^{\ast n}\Vert_{q} \geq -np \log \Vert \mu \Vert_{\pf_q(G)}\geq 0. $$
By letting $q\rightarrow 1^{+}$ (and hence, $p\rightarrow \infty$), we obtain
$$H(G,\mu^{\ast n})\geq n \ A(G,\mu),$$
where
$$A(G,\mu) := \displaystyle\lim_{p\rightarrow \infty}-p\log \Vert \mu \Vert_{\pf_q(G)}.$$
We note that $A(G,\mu)$ exists, since by \eqref{Eq:complex interpolation-convlution operators-norm relations} the mapping
$$[2,\infty ) \mapsto [0,\infty ), \;\; p\mapsto -p \log \Vert \mu \Vert_{\pf_q(G)}$$
is increasing (and bounded by $H(G,\mu)$). Therefore,
$$h(G,\mu)=\displaystyle\lim_{n\rightarrow \infty }\dfrac{H(G,\mu^{\ast n})}{n} \geq A(G,\mu).$$
By replacing $\mu$ with $\mu^{\ast n}$, we obtain
\begin{align*}
h (G,\mu) =\dfrac{h(G,\mu^{\ast n})}{n} \geq \dfrac{A(G,\mu^{\ast n})}{n} \geq \dfrac{-p\log \Vert \mu^{\ast n}\Vert_{\pf_q(G)}}{n}.
\end{align*}
Combining the preceding relation with \eqref{Eq:complex interpolation-convlution operators-norm relations}, it follows that the mapping \eqref{Eq:asymptote Lp conv operators} is increasing, with
\begin{align*}
& h (G,\mu) \geq -p\log r_{\pf_q(G)}(\mu ) \;\; \textrm{for} \;\; p\geq 2.
\end{align*}
Therefore,
\begin{align} \label{eq:avezentropyequalconvolutionentropy}
h (G,\mu) \geq \displaystyle\lim_{p\rightarrow \infty}-p\log r_{\pf_q(G)}(\mu ).
\end{align}
\end{proof}

The following theorem proves that for random walks on groups with property RD, we have equality in \eqref{eq:avezentropyequalconvolutionentropy}.

\begin{thm}\label{T:Avez entropy-p conv operators-RD}
Let $G$ be a countable group satisfying property RD with respect to a length function $\fL$, and let $\mu$ be a probability measure on $G$ with both finite entropy and finite $\alpha$-moment with respect to $1+\fL$ for some $\alpha>0$. Then the Avez entropy and the convolution entropy of the $\mu$-random walk coincide:
\begin{align} \label{Eq:Avez-entropy-p conv operators-RD}
    h(G,\mu)=c(G,\mu).
\end{align}
\end{thm}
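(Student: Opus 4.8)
The plan is to establish the reverse inequality $h(G,\mu) \leq c(G,\mu)$, since Theorem \ref{T:Avez entropy-p conv operators-I} already gives $c(G,\mu) \leq h(G,\mu)$. The strategy is to upgrade the chain of inequalities in the proof of Theorem \ref{T:Avez entropy-p conv operators-I}, where we estimated $\|\mu^{\ast n}\|_{\pf_q(G)}$ from below by $\|\mu\|_{\pf_q(G)}^n$, by now obtaining a matching \emph{upper} bound on $\|\mu^{\ast n}\|_{q}$ (equivalently a lower bound on $-p\log\|\mu^{\ast n}\|_q$) in terms of the $\pf_q$-norms. The key input is property RD, which for a suitable polynomial weight $\mathcal{P}_{\fL}^{d}$ gives a bounded inclusion $\ell^2(G, \mathcal{P}_{\fL}^{d}) \hookrightarrow C^*_r(G) = \pf_2(G)$; the $p$-analogue, obtained by complex interpolation between this RD estimate and the trivial bound $\ell^1(G) \to \pf_p(G)$, will yield a bounded inclusion $\ell^q(G,\om_{p}) \hookrightarrow \pf_q(G)$ for a weight $\om$ equivalent to a power of $1+\fL$, for all $q$ in the relevant range near $1$. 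Thus there is a constant $K_p$ (whose growth in $p$ we must control) with $\|\mu^{\ast n}\|_{\pf_q(G)} \leq K_p \|\mu^{\ast n}\|_{q,\om_{p}}$.

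From here the steps are: (1) Take logarithms, multiply by $-p$, divide by $n$, and let $n\to\infty$ to get $-p\log r_{\pf_q(G)}(\mu) \geq \limsup_n \tfrac{1}{n}(-p\log\|\mu^{\ast n}\|_{q,\om_{p}}) - \limsup_n \tfrac{p\log K_p}{n}$; the second term vanishes since $K_p$ does not depend on $n$. (2) Apply Theorem \ref{T:compare weighted and non-weighted Shannon entropy}(ii) (using that $\mu$ has finite entropy and finite $\alpha$-moment, hence finite $\log(1+\fL)$-moment, so finite $\log\om$-moment, and $\mu \in \ell^{q_0}(G,\om_{p_0})$ for $p_0$ large by the moment condition together with an interpolation argument): as $p\to\infty$ along the subsequence where everything is defined, $-p\log\|\mu^{\ast n}\|_{q,\om_{p}} \to H_\om(G,\mu^{\ast n})$, and this is monotone in $p$, so $-p\log\|\mu^{\ast n}\|_{q,\om_p} \leq H_\om(G,\mu^{\ast n})$ for all $p \geq p_0$ — wait, we need the inequality in the useful direction; since the map $p\mapsto -p\log\|\cdot\|_{q,\om_p}$ is increasing with supremum $H_\om$, we have $-p\log\|\mu^{\ast n}\|_{q,\om_p}$ approaching $H_\om(G,\mu^{\ast n})$ from below, so dividing by $n$ and sending $n\to\infty$ first gives $\liminf$-type control. (3) Combine: $c(G,\mu) = \lim_p -p\log r_{\pf_q}(\mu) \geq \lim_p \lim_n \tfrac1n(-p\log\|\mu^{\ast n}\|_{q,\om_p})$; identifying the inner limit with $h_\om(G,\mu) = h(G,\mu) - \Lc_\om(G,\mu)$ via Definition \ref{D:weighted Avez entropy} and \eqref{Eq:weighted Avez entropy-difference entropy and Lyap expo}. (4) Invoke Theorem \ref{T:vanishing Lyap expo-log of lenght function} with the subadditive function being (a scalar multiple of) $\fL$: since $\om$ is equivalent to $(1+\fL)^d$ and $\mu$ has finite $\log(1+\fL)$-moment, $\Lc_\om(G,\mu) = d\,\Lc_{1+\fL}(G,\mu) = 0$, so $h_\om(G,\mu) = h(G,\mu)$, giving $c(G,\mu) \geq h(G,\mu)$.

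The order I would carry this out: first set up the interpolated RD inequality $\ell^q(G,\om_p)\hookrightarrow\pf_q(G)$ carefully via Proposition \ref{T:n linear-interpol} (or directly \eqref{Eq:complex interpolation-convlution operators-norm relations}-style interpolation), pinning down that the constant $K_p$ can be taken bounded (e.g. $K_p \leq K_0^{1-\theta} \cdot 1$) or at worst subexponential in a way that washes out after dividing by $n$; second verify the integrability bookkeeping (finite $\alpha$-moment w.r.t.\ $1+\fL$ $\Rightarrow$ $\mu \in \ell^{q_0}(G,\om_{p_0})$ for suitable $p_0$, which needs a short Hölder/interpolation argument since a moment bound on $\mu$ controls a weighted $\ell^1$-norm, and then \eqref{Eq:complex interpolation-weight Lq} upgrades to a weighted $\ell^{q_0}$-norm); third run the limit manipulations of steps (1)--(3); fourth apply the vanishing theorem.

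The main obstacle I anticipate is controlling the constant $K_p$ from the interpolated RD estimate uniformly as $p\to\infty$ (equivalently $q\to 1^+$): property RD gives a fixed constant at $p=2$, and interpolation with the contractive map at $p=\infty$ keeps the constant bounded by $K_0^{1-\theta}\leq \max(1,K_0)$, so the term $\tfrac{p\log K_p}{n}$ is harmless after $n\to\infty$ but we must make sure the \emph{degree} $d$ of the weight does not blow up with $p$ — and indeed it does not, since RD fixes $d$ once and for all, and the interpolated weight is $\om_p = ((1+\fL)^d)_p = (1+\fL)^{d/p}$, whose exponent only shrinks. A secondary subtlety is justifying the interchange/order of the double limit $\lim_p\lim_n$ versus $\lim_n\lim_p$: this is handled by the monotonicity provided by Theorem \ref{T:compare weighted and non-weighted Shannon entropy}(i) and Theorem \ref{T:Avez entropy-p conv operators-I} (both relevant maps in $p$ are increasing), so the iterated infima/suprema can be swapped exactly as in the proof of Theorem \ref{T:Lyap expo-weighted L1}.
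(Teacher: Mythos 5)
Your overall strategy is the one the paper follows: reduce to $h(G,\mu)\le c(G,\mu)$ via Theorem \ref{T:Avez entropy-p conv operators-I}; use property RD plus interpolation to obtain a bounded inclusion $\ell^q(G,\om_p)\hookrightarrow\pf_q(G)$ with constant $C^{p_0/p}\le\max(1,C)$ (the paper quotes this from the proof of \cite[Theorem 5.10]{SW-exotic}); identify $\lim_{p\to\infty}-p\log\Vert\mu^{*n}\Vert_{q,\om_p}=H_\om(G,\mu^{*n})$ via Theorem \ref{T:compare weighted and non-weighted Shannon entropy}; and finish with $h_\om(G,\mu)=h(G,\mu)$ from Theorem \ref{T:vanishing Lyap expo-log of lenght function}. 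The integrability bookkeeping and the control of the constant $K_p$ that you flag are handled exactly as you anticipate.

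There is, however, a genuine gap in the order of limits in your steps (1)--(3). After sending $n\to\infty$ first, you need $\lim_{p}\limsup_n\frac1n\bigl(-p\log\Vert\mu^{*n}\Vert_{q,\om_p}\bigr)\ge h_\om(G,\mu)$, but the monotonicity in $p$ gives exactly the opposite control: since $-p\log\Vert\mu^{*n}\Vert_{q,\om_p}\le H_\om(G,\mu^{*n})$ for every $p$ and $n$, you only obtain $\limsup_n\frac1n\bigl(-p\log\Vert\mu^{*n}\Vert_{q,\om_p}\bigr)\le h_\om(G,\mu)$ for each fixed $p$. The appeal to swapping iterated limits ``as in Theorem \ref{T:Lyap expo-weighted L1}'' does not apply: there both iterated limits are infima of one family (the map in $p$ is decreasing, and the map in $n$ is subadditive because $\ell^1(G,\om_p)$ is a Banach algebra), whereas here the limit in $p$ is a supremum and the weighted R\'enyi-type quantity $n\mapsto-p\log\Vert\mu^{*n}\Vert_{q,\om_p}$ is not subadditive in $n$ for fixed finite $p$ (weighted $\ell^q$ is not an algebra under convolution). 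In general one only has $\sup_p\limsup_n\le\limsup_n\sup_p$, which is the useless direction, and the inequality can be strict. The repair is simply to reverse the order, which is what the paper does: for fixed $n$, combine $r_{\pf_q(G)}(\mu)^n\le\Vert\mu^{*n}\Vert_{\pf_q(G)}\le C^{p_0/p}\Vert\mu^{*n}\Vert_{q,\om_p}$ into $-pn\log r_{\pf_q(G)}(\mu)\ge -p_0\log C-p\log\Vert\mu^{*n}\Vert_{q,\om_p}$, let $p\to\infty$ with $n$ fixed (the left side tends to $n\,c(G,\mu)$ by Theorem \ref{T:Avez entropy-p conv operators-I}, the right side to $H_\om(G,\mu^{*n})-p_0\log C$), and only then divide by $n$ and let $n\to\infty$. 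With that reordering your argument coincides with the paper's proof.
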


\begin{proof}
By Theorem \ref{T:Avez entropy-p conv operators-I}, we only need to prove that $h(G,\mu) \leq c(G,\mu)$.
Since $G$ has property RD with respect to $\fL$, there exists $d \in \N$ such that for the weight $\om$ on $G$ given by
$$\om(s)=(1+\fL(s))^d, \;\; s\in G,$$
we have the inclusion $\ell^2(G,\om_2)\subseteq C^*_r(G)$.
By hypothesis, $\mu\in \ell^1(G,\om^{\frac{\alpha}{d}})$, so that by (the proof of) Theorem \ref{T:Lyap expo-weighted L1}, $\mu$ has finite $\log \om$-moment. On the other hand, $\log \om=d\log(1+\fL)$, and hence, by Theorem \ref{T:vanishing Lyap expo-log of lenght function}, the Lyapunov exponent of $\mu$ with respect to $\om$ vanishes:
$$\Lc_\om(G,\mu)=0.$$
In other words, by \eqref{Eq:equality of weighted and nonweighted Avez entropy}, we have
$$h(G,\mu)=h_\omega(G,\mu).$$
Therefore, it suffices to prove that
\begin{align*}
 h_\omega(G,\mu)\leq \displaystyle\lim_{p\rightarrow \infty}-p\log r_{\pf_q(G)}(\mu).
\end{align*}
Now pick $p_0\in [2,\infty]$ such that $p_0> \displaystyle\frac{d}{\alpha}$. By our hypothesis,
$\mu \in \ell^{1}(G,\omega_{p_0})$, so that 
$$\mu \in \displaystyle\bigcap_{p\geq p_0}\ell^{1}(G,\omega_{p})\subseteq \bigcap_{p\geq p_0}\ell^{q}(G,\omega_{p}).$$
Fix $1< q\leq p<\infty$ with $p\geq p_0$ and $\dfrac{1}{p}+\dfrac{1}{q}=1$.
Using interpolation methods, it is showed in the proof of \cite[Theorem 5.10]{SW-exotic} that
$$\ell^{q}(G,\omega_{p}) \subseteq \pf_q(G).$$
Moreover, there exists a constant $C>0$ (in fact, $C$ is the norm of the inclusion map $\ell^{q_0}(G,\omega_{p_0}) \hookrightarrow \pf_{q_0}(G)$) such that for every $n\in \mathbb{N}$,
$$r_{\pf_q(G)}(\mu^{*n})\leq \Vert \mu^{\ast n}\Vert_{\pf_q(G)}\leq C^{\frac{p_0}{p}}\Vert \mu^{\ast n}\Vert_{\ell^{q}(G,\omega_{p})},$$
so that
$$-p_0\log C -p\log \Vert \mu^{\ast n}\Vert_{\ell^{q}(G,\omega_{p})} \leq -p n \log r_{\pf_q(G)}(\mu). $$
By letting $p\to \infty$, we have (by Theorem \ref{T:compare weighted and non-weighted Shannon entropy})
$$-p_0\log C - H_\omega(\mu^{\ast n},G) \leq n\lim_{p\rightarrow \infty} -p \log r_{\pf_q(G)}(\mu). $$
Finally, by dividing both sides by $n$ and letting $n\to \infty$, we obtain
\begin{align*}
 h_\omega(G,\mu)=\displaystyle\lim_{n\rightarrow \infty}\dfrac{H_{\omega}(G,\mu^{\ast n})}{n}\leq \displaystyle\lim_{p\rightarrow \infty}-p\log r_{\pf_q(G)}(\mu).
\end{align*}
This completes the proof.
\end{proof}

\begin{rem}
\
\begin{enumerate}[(i)]
\item To the knowledge of the authors, equality \eqref{Eq:Avez-entropy-p conv operators-RD} is new, even for free groups. It generalizes known results on the entropy of random walks on groups with polynomial growth to non-amenable groups with property RD. It also provides a new interesting way to compute the entropy of random walks on groups with property RD.

\item As a consequence of Theorem \ref{T:Avez entropy-p conv operators-RD}, we observe that for suitable probability measures on groups with property RD, we can ``commute the double limits'' in computing the Avez entropy. Indeed, for a probability measure $\mu$ satisfying \eqref{Eq:Avez-entropy-p conv operators-RD}, we have
\begin{align*}
 h(G,\mu) &= \lim_{n\rightarrow \infty} \lim_{q\rightarrow 1^{+}} -\frac{p}{n}\log \Vert \mu^{*n} \Vert_{q} \\
 &= \lim_{q\rightarrow 1^+} \lim_{n\rightarrow \infty} -\frac{p}{n}\log \Vert \mu^{*n} \Vert_{q}.
\end{align*}
\end{enumerate}
\end{rem}

\begin{cor}\label{C:Trivial Poisson boundary-RD}
Let $G$ be a group with property RD with respect to $\fL$, and let $\mu$ be a probability measure on $G$ with both finite entropy and finite $\alpha$-moment with respect to $\fL$ for some $\alpha>0$. Then the $\mu$-random walk on $G$ has trivial Poisson boundary if and only if $\supp\,\mu$ generates an amenable subgroup of $G$.
\end{cor}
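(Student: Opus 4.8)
The plan is to combine Theorem \ref{T:Avez entropy-p conv operators-RD} with the classical characterization of triviality of the Poisson boundary via vanishing of the Avez entropy, and then to interpret the vanishing of the convolution entropy as an amenability statement about the subgroup generated by the support of $\mu$. First I would reduce to the subgroup $H := \langle \supp \mu \rangle$: since $\mu$ is supported on $H$, all convolution powers $\mu^{*n}$ are supported on $H$, the random walk takes place inside $H$, and $H$ inherits property RD with respect to the restriction of $\fL$ (a subgroup of an RD group is RD, with the same length function restricted, since $\ell^2(H,\om|_H) \hookrightarrow \ell^2(G,\om) \to C^*_r(G)$ and $C^*_r(H) \hookrightarrow C^*_r(G)$ isometrically); moreover $\mu$, viewed as a probability measure on $H$, still has finite entropy and finite $\alpha$-moment with respect to $1+\fL|_H$. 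Since $\mu$ is non-degenerate as a measure on $H$, and the Poisson boundary of $(G,\mu)$ coincides with that of $(H,\mu)$, we may assume without loss of generality that $\supp \mu$ generates $G$.

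Next I would invoke the Kaimanovich--Vershik / Derriennic theorem (stated in the introduction): for a non-degenerate measure $\mu$ with finite entropy, the Poisson boundary of $(G,\mu)$ is trivial if and only if $h(G,\mu) = 0$. Combined with Theorem \ref{T:Avez entropy-p conv operators-RD}, which gives $h(G,\mu) = c(G,\mu) = \lim_{p\to\infty} -p \log r_{\pf_q(G)}(\mu)$ under exactly our hypotheses, this translates triviality of the Poisson boundary into the condition $\lim_{p\to\infty} -p\log r_{\pf_q(G)}(\mu) = 0$. Since by Theorem \ref{T:Avez entropy-p conv operators-I} the map $p \mapsto -p\log r_{\pf_q(G)}(\mu)$ is non-negative and increasing on $[2,\infty)$, the limit vanishes if and only if $r_{\pf_q(G)}(\mu) = 1$ for all $q \in (1,2]$ (equivalently for one such $q$, by monotonicity). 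In particular, taking $q = 2$, this forces $r_{C^*_r(G)}(\mu) = \|\lambda(\mu)\| = 1$.

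The crucial step is then to connect $r_{\pf_q(G)}(\mu) = 1$ with amenability of $G$. Here I would use the following chain: if $r_{C^*_r(G)}(\mu) = 1$, i.e.\ $1$ lies in the spectrum of $\lambda(\mu)$ (using that $\|\lambda(\mu)\| \le \|\mu\|_1 = 1$ always, and $\lambda(\mu)$ has $1$ in its spectrum precisely when the norm equals $1$ since $\mu \ge 0$), then by Kesten's theorem — more precisely, its generalization to non-degenerate (not necessarily finitely-supported) probability measures, as in the form $\|\lambda(\mu)\| = 1 \iff G$ amenable for non-degenerate $\mu$ — the group $G$ is amenable. Conversely, if $G$ is amenable then $\pf_q(G) = \pf_2(G) = C^*_r(G)$-type identifications give $r_{\pf_q(G)}(\mu) = \|\mu\|_1 = 1$ for all $q$ (amenability implies $\|\lambda_q(\mu)\|_{\pf_q(G)} = \|\mu\|_1$; this is the $L^p$-analogue of Kesten, which holds because for amenable $G$ the trivial representation is weakly contained in $\lambda_q$), so $c(G,\mu) = 0 = h(G,\mu)$ and the Poisson boundary is trivial.

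The main obstacle I anticipate is the spectral-radius-versus-norm subtlety and the precise form of Kesten's criterion needed: one must be careful that $r_{\pf_q(G)}(\mu)$ rather than the norm controls things, but for a non-negative $\mu$ the spectral radius in $\pf_q(G)$ equals the norm $\|\lambda_q(\mu)\|$ (one may see this from positivity, or simply note $r \le \|\cdot\| \le 1$ and that $h(G,\mu)=0$ already forces $r = 1$, hence $\|\lambda_q(\mu)\|=1$ too); and one needs the version of Kesten's amenability criterion valid for arbitrary non-degenerate $\mu$ with the stated moment conditions rather than only finitely-supported symmetric $\mu$ — this is standard but should be cited carefully (e.g.\ via weak containment of the trivial representation in $\lambda_q$, together with the $\ell^p$-Kesten results available for $\pf_p(G)$). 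Once these ingredients are in place, the equivalence follows by stringing together: trivial Poisson boundary $\iff h(G,\mu) = 0 \iff c(G,\mu) = 0 \iff r_{\pf_q(G)}(\mu) = 1 \iff G$ (hence $\langle \supp \mu\rangle$ in the unreduced statement) amenable.
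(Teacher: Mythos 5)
Your \emph{if} direction is essentially the paper's argument: amenability of $H=\langle \supp\,\mu\rangle$ gives $\|\lambda(\mu^{*n})\|=1$ for all $n$, hence $r_{\pf_q(G)}(\mu)=1$ for all $q$ (the paper obtains this from Kesten at $q=2$ together with interpolation of spectral radii; your route via weak containment of the trivial representation in $\lambda_q$ is an acceptable variant), hence $c(G,\mu)=0$, hence $h(G,\mu)=0$ by Theorem \ref{T:Avez entropy-p conv operators-RD}, hence the boundary is trivial. The genuine gap is in your \emph{only if} direction, in the step ``$r_{C^*_r(G)}(\mu)=1\Rightarrow\|\lambda(\mu)\|=1\Rightarrow G$ amenable by Kesten.'' The implication $\|\lambda(\mu)\|=1\Rightarrow G$ amenable is \emph{false} for non-symmetric measures whose support generates the group: by Akemann--Ostrand, $\mu=\tfrac12(\delta_a+\delta_b)$ on $\mathbb{F}_2=\langle a,b\rangle$ satisfies $\|\lambda(\mu)\|=\tfrac12\cdot 2\sqrt{2-1}=1$, yet $\mathbb{F}_2$ is not amenable. (Note that your reduction to $H$ only guarantees that $\supp\,\mu$ generates $H$ as a \emph{group}, not as a semigroup, so you are exactly in this situation; calling the reduced measure ``non-degenerate'' is already inaccurate.) Your fallback claim that for non-negative $\mu$ the spectral radius in $\pf_q(G)$ equals the norm is false for the same example: $\lambda(\mu)$ is not normal, and Haagerup's inequality applied to $\mu^{*n}$ (uniformly spread over $2^n$ words of length $n$) gives $r_{C^*_r(\mathbb{F}_2)}(\mu)\le 2^{-1/2}<1=\|\lambda(\mu)\|$. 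What your chain actually provides is the stronger fact $r_{C^*_r(G)}(\mu)=1$, i.e.\ $\|\lambda(\mu^{*n})\|=1$ for \emph{every} $n$; one can deduce amenability of $H$ from this (apply symmetric Kesten to each $\check\mu^{*n}*\mu^{*n}$ and pass to the increasing union of the resulting amenable subgroups, which is normal in $H$ with cyclic quotient), but that is a genuine argument which your proposal neither states nor proves, and ``Kesten's theorem'' as you cite it does not cover it.

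The paper sidesteps all of this: its \emph{only if} direction is the classical soft fact that triviality of the Poisson boundary forces $\langle\supp\,\mu\rangle$ to be amenable (for instance because the action on the Poisson boundary is Zimmer-amenable, so a trivial boundary makes the action on a point amenable), which needs no property RD, no convolution entropy, and no Kesten-type input. I recommend replacing your norm-based Kesten step by that citation (or by the spectral-radius argument sketched above), while keeping your reduction to $H$ --- which is correct, since subgroups inherit property RD and the boundary is unchanged --- and your \emph{if} direction as written.
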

\begin{proof}
    The ``only if'' direction is generally true for random walks on groups. For the ``if'' direction, suppose that $\supp\, \mu$ generates an amenable subgroup. Then, by Kesten's criterion, $r_{C^*_r(G)}(\mu)=1$. Hence, for every $1< q< p<\infty$ with $\dfrac{1}{p}+\dfrac{1}{q}=1$, we have (for some $\theta\in (0,1)$ depending on $q$)
    $$1=r_{\pf_2(G)}(\mu)\leq r_{\pf_q(G)}(\mu)^{1-\theta} r_{\pf_p(G)}(\mu)^{\theta}\leq r_{\pf_q(G)}(\mu)^{1-\theta} \leq 1 . $$
    Therefore, $r_{\pf_q(G)}(\mu)=1$. The final result now follows from Theorem \ref{T:Avez entropy-p conv operators-RD}.
\end{proof}

\begin{rem}
    Corollary \ref{C:Trivial Poisson boundary-RD} was already known for all finitely generated groups. Indeed, it follows from \cite{FHTV} that it is known if $\supp\, \mu$ lies inside a group without any quotient groups with the infinite conjugacy class (ICC) property. By \cite[Theorem 1]{FTV}, this property is characterized by being virtually nilpotent (or equivalently, having polynomial growth). However, for finitely generated groups, having a non-virtually nilpotent amenable subgroup is an obstruction to property RD \cite{Joli}.
\end{rem}

\subsection{Furstenberg entropy as a convolution-type entropy }

In Section \ref{S:Avez entopy vs convolution-type entropy}, we established a relation between the entropy of a $\mu$-random walk and the asymptotic behavior of its spectral radius in the algebra of convolution operators on $\ell^q(G)$-spaces. We will now show that a similar result holds for actions of groups with rapid decay on $(G,\mu)$-stationary spaces, provided that the associated Koopman representation is weakly contained in the left-regular representation of $G$.

We start with the following result, which is analogous to Theorem \ref{T:Avez entropy-p conv operators-I} and Proposition \ref{P:asymptote Lp Harish-Chandra function to entropy}.

\begin{prop}\label{P:asymptote Lp conv operators-(X,xi)}
Let $(X,\xi)$ be a $(G,\mu)$-stationary space. Then the mapping
\begin{align}\label{Eq:asymptote Lp conv operators-(X,xi)}
[2,\infty) \rightarrow [0,\infty ), \;\; p \mapsto -p\log \|\pi_{q,X}(\mu)\|,
\end{align}
where $q$ is the conjugate exponent of $p$, is an increasing function bounded by $h_\mu(X,\xi)$.
\end{prop}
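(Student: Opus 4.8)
The plan is to mimic the proof of Theorem \ref{T:Avez entropy-p conv operators-I} but with the $L^p$-Koopman representation in place of the left-regular representation, using the matrix-coefficient estimate from Proposition \ref{P:asymptote Lp Harish-Chandra function to entropy} to supply the bound. First I would note that $\pi_{q,X}(\mu)$ is a bounded operator on $L^q(X,\xi)$ (indeed $\|\pi_{q,X}(\mu)\| \leq \|\mu\|_1 = 1$), so $-p\log\|\pi_{q,X}(\mu)\| \geq 0$, which gives that the mapping takes values in $[0,\infty)$. The monotonicity in $p$ should come from an interpolation argument: for $2 \leq p_1 \leq p_2 < \infty$ with conjugates $q_1 \geq q_2 > 1$, one interpolates between $\pi_{1,X}$ on $L^1(X,\xi)$ (which is contractive, with norm of $\mu$ equal to $1$) and $\pi_{p_1,X}$ on $L^{p_1}(X,\xi)$ to get $\|\pi_{q_2,X}(\mu)\| \leq \|\pi_{1,X}(\mu)\|^{1-\theta}\|\pi_{q_1,X}(\mu)\|^{\theta}$ with $\theta = q_1/q_2 = p_1/p_2$ (using that the complex interpolation of $L^{1}$ and $L^{p_1}$ is $L^{q_2}$ with this $\theta$, and that the Koopman representations are compatible on the intersection). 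Raising to appropriate powers yields $\|\pi_{q_2,X}(\mu)\|^{p_2} \leq \|\pi_{q_1,X}(\mu)\|^{p_1}$, hence $-p_2\log\|\pi_{q_2,X}(\mu)\| \geq -p_1\log\|\pi_{q_1,X}(\mu)\|$, so the mapping is increasing on $[2,\infty)$.

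For the upper bound, I would use that the operator norm dominates the matrix coefficient against the unit vector $1 \in L^\infty(X,\xi) \subseteq L^q(X,\xi)$: since $\|1\|_{q} = \|1\|_{q'} = 1$ for all exponents (as $\xi$ is a probability measure), we have $\|\pi_{q,X}(\mu)\| \geq |\langle \pi_{q,X}(\mu)1, 1\rangle| = \langle \pi_{q,X}(\mu)1,1\rangle$, the last equality because this quantity is non-negative (it equals $\sum_s \mu(s)\int_X \rho(s,x)^{1/q}\,d\xi(x) \geq 0$). Taking minus the logarithm and multiplying by $p$ reverses the inequality, so $-p\log\|\pi_{q,X}(\mu)\| \leq -p\log\langle\pi_{q,X}(\mu)1,1\rangle$, and the right-hand side is bounded by $h_\mu(X,\xi)$ by Proposition \ref{P:asymptote Lp Harish-Chandra function to entropy}. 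This gives the desired bound.

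I do not expect a serious obstacle here; the argument is essentially a transcription of the proof of Theorem \ref{T:Avez entropy-p conv operators-I}, with Proposition \ref{P:asymptote Lp Harish-Chandra function to entropy} replacing the appeal to the Shannon entropy asymptotics and with the $L^p$-Koopman representations replacing the $p$-pseudofunction algebras. The one point requiring a little care is the interpolation step for the operator norms of the Koopman representations: one must check that $\pi_{1,X}$ and $\pi_{p_1,X}$ are genuinely compatible (they agree on the dense subspace $L^1(X,\xi)\cap L^{p_1}(X,\xi)$, since in both cases $\pi_{\cdot,X}(\mu)$ acts by the same explicit cocycle formula), so that Proposition \ref{T:n linear-interpol} (with $n=1$) or the standard complex interpolation of operators applies and produces the bilinear-type norm estimate. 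Apart from verifying this compatibility and the identification of $L^{q_2}(X,\xi)$ as the interpolant of $L^1$ and $L^{p_1}$ with the correct $\theta$, the remaining steps are the routine manipulations indicated above.
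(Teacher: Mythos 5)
The second half of your argument (bounding $-p\log \|\pi_{q,X}(\mu)\|$ by $-p\log \la \pi_{q,X}(\mu)1,1\ra$ via $\|1\|_{q}=\|1\|_{p}=1$ and then invoking Proposition \ref{P:asymptote Lp Harish-Chandra function to entropy}) is correct and is exactly what the paper does. The gap is in the monotonicity step, and it sits precisely at the point you flagged and then waved away. You assert that $\pi_{1,X}$ and $\pi_{p_1,X}$ ``agree on the dense subspace $L^1(X,\xi)\cap L^{p_1}(X,\xi)$, since in both cases $\pi_{\cdot,X}(\mu)$ acts by the same explicit cocycle formula.'' This is false: the defining formula is
\begin{equation*}
[\pi_{p,X}(s)f](x)=\left[\frac{d(s\xi)}{d\xi}(x)\right]^{1/p} f(s^{-1}x),
\end{equation*}
and the exponent $1/p$ on the Radon--Nikodym derivative changes with $p$. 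A $\mu$-stationary measure is in general not $G$-invariant, so $\frac{d(s\xi)}{d\xi}\not\equiv 1$ and $\pi_{1,X}(s)f\neq \pi_{p_1,X}(s)f$ even for $f$ in the intersection. Consequently the compatibility hypothesis of Proposition \ref{T:n linear-interpol} (or of the standard interpolation theorem for a single operator between compatible couples) is not satisfied, and the inequality $\|\pi_{q_2,X}(\mu)\|^{p_2}\leq \|\pi_{q_1,X}(\mu)\|^{p_1}$ does not follow from the argument as written. (Two smaller slips: the identity $q_1/q_2=p_1/p_2$ is false in general, although the parameter $\theta=p_1/p_2$ you use is the correct one for $(L^1,L^{q_1})_\theta=L^{q_2}$; and the couple being interpolated should be $L^1$ and $L^{q_1}$, not $L^1$ and $L^{p_1}$.)

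The paper circumvents this by making the map $p$-independent and pushing the Radon--Nikodym factor into the target space: it considers the single bilinear map $T_\mu(f,g)(s,x)=\mu(s)f(x)g(sx)$, which by \eqref{Eq:Lq Lp-Koopman rep-formula} is bounded as a map $L^{q}(X,\xi)\times L^{p}(X,\xi)\to L^1(G\times X,\rho^{1/p})$ with norm controlled by $\|\pi_{q,X}(\mu)\|$, and is contractive as a map $L^1(X,\xi)\times L^\infty(X,\xi)\to L^1(G\times X)$. Since the map itself is literally the same in both settings, the bilinear interpolation theorem (Proposition \ref{T:n linear-interpol} with $n=2$) applies, the weights $\rho^{1/p}$ on the target interpolate correctly, and one obtains $\|\pi_{q_2,X}(\mu)\|\leq\|\pi_{q_1,X}(\mu)\|^{p_1/p_2}$, hence the monotonicity. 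Alternatively, your single-operator route could be repaired by Stein's interpolation theorem applied to the analytic family $f\mapsto \sum_{s}\mu(s)\left[\frac{d(s\xi)}{d\xi}\right]^{z}f(s^{-1}\cdot)$, but the agree-on-the-intersection version you invoke does not apply here.
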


\begin{proof}
Let $2 \leq p_1 \leq p_2 <\infty$, and let $q_1$ and $q_2$ be their respective conjugate exponents. For every $f\in L^{q_2}(X,\xi)$ and $g\in L^{p_2}(X,\xi)$, we have
\begin{align*} 
 \bigg| \sum_{s\in G}\int_X \mu(s)f(x)\overline{g(sx)}\rho(s,x)^{1/p_2} d\xi(x)\bigg| \leq  \|\pi_{q_2,X}(\mu)\| \|f\|_{L^{q_2}(X,\xi)}\|g\|_{L^{p_2}(X,\xi)}.
\end{align*}
In other words, the mapping
\begin{align}\label{Eq:bilinear mapping-conv operator}
  (f,g)\mapsto T_\mu(f,g)(s,x)=\mu(s)f(x)g(sx), \;\; s\in G, x\in X,
\end{align}
defines a bounded bilinear map
\begin{align}\label{Eq:bilinear mapping-Con operator-Lq2}
  T_{q_2,p_2}:  L^{q_2}(X,\xi)\times L^{p_2}(X,\xi) \to L^1(G\times X, \rho^{1/{p_2}}).
\end{align}
Here, the measure on $G\times X$ is $ds\times \xi$, where $ds$ is the counting measure (i.e.~the Haar measure) on $G$. On the other hand, it is straightforward to verify that
for every $f\in L^1(X,\xi)$ and $g\in L^\infty(X,\xi)$, we have
\begin{align*} 
  \bigg|\sum_{s\in G}\int_X \mu(s)f(x)g(sx) d\xi(x)\bigg|\leq  \|f\|_{L^1(X,\xi)}\|g\|_{L^\infty(X,\xi)}.
\end{align*}
Hence, the mapping in \eqref{Eq:bilinear mapping-conv operator} defines a
bounded bilinear map
\begin{align}\label{Eq:bilinear mapping-Con operator-L1}
  T_{1,\infty}:  L^1(X,\xi)\times L^\infty(X,\xi) \to L^1(G\times X).
\end{align}
For $\theta=p_1/p_2$, we have the following complex interpolation pairs:
$$\big(L^1(X,\xi),L^{q_2}(X,\xi)\big)_\theta=L^{q_1}(X,\xi) \ , \ \big(L^\infty(X,\xi),L^{p_2}(X,\xi)\big)_\theta=L^{p_1}(X,\xi),$$
$$\big(L^1(G\times X), L^1(G\times X,\rho^{1/p_2})\big)_\theta=L^1(G\times X,\rho^{1/p_1}).$$
If we combine these facts with \eqref{Eq:bilinear mapping-Con operator-L1}, \eqref{Eq:bilinear mapping-Con operator-Lq2}, and Theorem \ref{T:n linear-interpol}, we obtain that the mapping \eqref{Eq:bilinear mapping-conv operator} defines a bounded bilinear map
\begin{align*} 
  T_{q_1,p_1}:  L^{q_1}(X,\xi)\times L^{p_1}(X,\xi) \to L^1(G\times X, \rho^{1/{p_1}}).
\end{align*}
Moreover,
$$\| T_{q_1,p_1}\|\leq \| T_{q_2,p_2}\|^{p_1/p_2},$$
or equivalently,
$$\|\pi_{q_1,X}(\mu)\| \leq  \|\pi_{q_2,X}(\mu)\|^{p_1/p_2}.$$
This implies that the mapping \eqref{Eq:asymptote Lp conv operators-(X,xi)} is increasing. Furthermore, by Proposition \ref{P:asymptote Lp Harish-Chandra function to entropy}, we have
\begin{align*}
h_\mu(X,\xi) & \geq \lim_{p\to \infty} -p\log \la \pi_{q,X}(\mu) 1, 1 \ra \\
&\geq \displaystyle\lim_{p\rightarrow \infty} -p\log \|\pi_{q,X}(\mu)\|.
\end{align*}

\end{proof}

\begin{thm}\label{T:entropy vs weak containment-RD}
Let $G$ be a countable group satisfying property RD with respect to $\fL$, and let $\mu$ be a non-degenerate probability measure on $G$ with both finite entropy and finite $\alpha$-moment with respect to $\fL$ for some $\alpha>0$. Let $(X,\xi)$ be a $(G,\mu)$-stationary space such that $\pi_{X}$ is weakly contained in $\lambda_G$. Then
\begin{align*}
\displaystyle\lim_{p\rightarrow \infty}-p\log r_{B(L^q(X,\xi))}(\pi_{q,X}(\mu))=h_\mu(X,\xi)=h(\mu,G)=\lim_{p\rightarrow \infty}-p\log r_{\pf_q(G)}(\mu).
\end{align*}
\end{thm}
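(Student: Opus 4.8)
The plan is to obtain the four-fold equality by combining results already established in the paper with one further comparison. First, I would note that since $\mu$ has finite $\alpha$-moment with respect to $\fL$ it also has finite $\log(1+\fL)$-moment (indeed $\alpha\log(1+\fL)=\log\bigl((1+\fL)^\alpha\bigr)\le(1+\fL)^\alpha$), so Theorem \ref{T:Square integrable-inverse Harich Chandra function} applies and gives $h_\mu(X,\xi)=h(G,\mu)$, while Theorem \ref{T:Avez entropy-p conv operators-RD} gives $h(G,\mu)=c(G,\mu)=\lim_{p\rightarrow\infty}-p\log r_{\pf_q(G)}(\mu)$. Thus three of the four quantities already coincide, and it remains to prove
\[
\lim_{p\rightarrow\infty}-p\log r_{B(L^q(X,\xi))}(\pi_{q,X}(\mu))=h_\mu(X,\xi).
\]

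For the inequality ``$\le$'', observe that $\pi_{q,X}$ is a homomorphism, so $r_{B(L^q(X,\xi))}(\pi_{q,X}(\mu))=\lim_{n\to\infty}\|\pi_{q,X}(\mu^{\ast n})\|^{1/n}$, with $n\mapsto-\log\|\pi_{q,X}(\mu^{\ast n})\|$ superadditive. Since $(X,\xi)$ is also a $(G,\mu^{\ast n})$-stationary space, Proposition \ref{P:asymptote Lp conv operators-(X,xi)} tells us that $p\mapsto-p\log\|\pi_{q,X}(\mu^{\ast n})\|$ is increasing and bounded by $h_{\mu^{\ast n}}(X,\xi)=n\,h_\mu(X,\xi)$. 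Dividing by $n$ and letting $n\to\infty$ shows that $p\mapsto-p\log r_{B(L^q(X,\xi))}(\pi_{q,X}(\mu))$ is increasing and that $-p\log r_{B(L^q(X,\xi))}(\pi_{q,X}(\mu))\le h_\mu(X,\xi)$ for every $p\ge2$; in particular the limit exists and is at most $h_\mu(X,\xi)$.

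For the reverse inequality, I would invoke Theorem \ref{T:charc amen action-RD groups}, which applies since $G$ has property RD and $\mu$ has finite $\log(1+\fL)$-moment (there is no circularity, as that theorem does not use the present one): the hypothesis $\pi_X\prec\lambda_G$ forces the action $G\curvearrowright(X,\xi)$ to be amenable in the sense of Zimmer. Amenability of the action upgrades the $L^2$-weak containment to all exponents, namely the $L^q$-Koopman representation $\pi_{q,X}$ is weakly contained in the $\ell^q$-regular representation $\lambda_q$, so that
\[
\|\pi_{q,X}(f)\|_{B(L^q(X,\xi))}\le\|\lambda_q(f)\|_{B(\ell^q(G))}=\|f\|_{\pf_q(G)}\qquad\text{for all }f\in\ell^1(G);
\]
this is the $L^q$-analogue of \cite[Theorem 4.3.1]{Ana-Dela 1}, obtained from amenability of the action by a transference argument. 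Applying the bound to $f=\mu^{\ast n}$ and taking $n$-th roots yields $r_{B(L^q(X,\xi))}(\pi_{q,X}(\mu))\le r_{\pf_q(G)}(\mu)$, hence $-p\log r_{B(L^q(X,\xi))}(\pi_{q,X}(\mu))\ge-p\log r_{\pf_q(G)}(\mu)$; letting $p\to\infty$ and using the first paragraph gives $\lim_{p\to\infty}-p\log r_{B(L^q(X,\xi))}(\pi_{q,X}(\mu))\ge h(G,\mu)=h_\mu(X,\xi)$. Combined with the second paragraph, this proves the theorem.

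The step I expect to be the main obstacle is precisely the $L^q$-weak containment $\pi_{q,X}\prec\lambda_q$. It cannot be extracted from the $L^2$-hypothesis by naive complex interpolation against the trivial endpoint: interpolating $\pi_{1,X}$ (bounded by $\|\cdot\|_1$) and $\pi_{2,X}$ (bounded by $\|\cdot\|_{C^*_r(G)}$) only gives $\|\pi_{q,X}(f)\|\le\|f\|_1^{1-2/p}\|f\|_{C^*_r(G)}^{2/p}$, and since the interpolation norm inequality likewise gives $\|f\|_{\pf_q(G)}\le\|f\|_1^{1-2/p}\|f\|_{C^*_r(G)}^{2/p}$ with the same exponents, this route only produces $r_{B(L^q(X,\xi))}(\pi_{q,X}(\mu))\le r_{C^*_r(G)}(\mu)^{2/p}$, i.e. the too-weak lower bound $-2\log r_{\pf_2(G)}(\mu)$, which by the monotonicity in Theorem \ref{T:Avez entropy-p conv operators-I} is in general strictly smaller than $h(G,\mu)$. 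So one genuinely needs the amenability of the action (equivalently, by Theorem \ref{T:charc amen action-RD groups}, property RD together with the hypothesis $\pi_X\prec\lambda_G$) and a transference argument adapted to the $\ell^q$-setting, rather than abstract interpolation.
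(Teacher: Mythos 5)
Your proof is correct and follows essentially the same route as the paper: the upper bound comes from Proposition \ref{P:asymptote Lp conv operators-(X,xi)} applied to $\mu^{\ast n}$ and divided by $n$, and the lower bound from Zimmer amenability of the action (via Theorem \ref{T:charc amen action-RD groups}) together with the transference inequality $\|\pi_{q,X}(\varphi)\|_{B(L^q(X,\xi))}\le\|\varphi\|_{\pf_q(G)}$. The $\ell^q$-transference step that you correctly single out as the crux is exactly what the paper imports from \cite[Theorem 2.4]{Heb-Kuhn}, stated there for $\varphi\in\ell^1(G)_+$, which suffices since it is only applied to the positive functions $\mu^{\ast n}$.
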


\begin{proof}
Using Corollary \ref{C:charc entopry-RD groups}, we know that $G$ acts amenably on $(X,\xi)$. Hence, by \cite[Theorem 2.4]{Heb-Kuhn}, for every $\varphi\in \ell^1(G)_+$,
\begin{align}\label{Eq:RD-1}
 \|\pi_{q,X}(\varphi)\|_{B(L^q(X,\xi))}\leq \|\varphi\|_{\pf_q(G)}.
\end{align}
This, together with Proposition \ref{P:asymptote Lp conv operators-(X,xi)}, shows that for every $n\in \N$ and $p\geq 2$,
\begin{align*}
nh_\mu(X,\xi) = h_{\mu^{*n}}(X,\xi) \geq -p\log \|\pi_{q,X}(\mu^{*n})\|_{B(L^q(X,\xi))} \geq -p\log \Vert \mu^{*n} \Vert_{\pf_q(G)},
\end{align*}
so that
\begin{align*}
h_\mu(X,\xi) &\geq -p\log r_{B(L^q(X,\xi))}(\pi_{q,X}(\mu))  \\
& \geq -p\log r_{\pf_q(G)}(\mu).
\end{align*}
Hence
\begin{align*}
h_\mu(X,\xi) & \geq \displaystyle\lim_{p\rightarrow \infty}-p\log r_{B(L^q(X,\xi))}(\pi_{q,X}(\mu)) \\
& \geq \displaystyle\lim_{p\rightarrow \infty} -p\log r_{\pf_q(G)} (\mu) \\
& = h(G,\mu),
\end{align*}
where the last equality follows from Theorem \ref{T:Avez entropy-p conv operators-RD}.
This completes the proof since, by Theorem \ref{T:charc amen action-RD groups}, $h_\mu(X,\xi)\leq h(G,\mu)$.
\end{proof}

\appendix

\section{A variation on the theorem of de la Vall\'ee Poussin} \label{sec:delavalleepoussin}

This appendix is devoted to the proof of Theorem \ref{T:De LA V Poussin as used}; it is a direct consequence of the following two theorems, the first one of which is, as explained in Section \ref{subsec:vanishinglyapunovexponents}, a variation on the theorem of de la Vall\'{e}e Poussin.
\begin{thm} \label{T:De LA V Poussin}
Let $(X,\xi)$ be a probability space, and let $f\in L^1(X,\xi)$ be a non-negative function. Then there is an increasing $1$-Lipschitz function $\psi:[0,\infty)\to [0,\infty)$ such that $\lim_{y\to \infty} \psi(y)=\infty$ and such that the following holds:
If we let $\Psi:[0,\infty) \to [0,\infty)$ be the function given by
\begin{align*} 
\Psi(y)=\int_{0}^{y} \psi(z)dz \;\;\textrm{for}\;\; y\geq 0,
\end{align*}
then
\begin{align*} 
\Psi(f)\in L^1(X,\xi).
\end{align*}
\end{thm}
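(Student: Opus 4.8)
The statement is de la Vallée Poussin's theorem for a single integrable function, with the extra demand that the modulus $\psi$ be $1$-Lipschitz. Since a $1$-Lipschitz function is allowed to grow linearly, whereas the classical construction can be made to grow as slowly as one pleases, this extra demand costs nothing; the plan is simply to run the standard discretised argument while keeping the increments of the building blocks under control. \emph{Discretisation.} I would put $a_n:=\xi(\{f>n\})$ for $n\in\N_0$. Since $t\mapsto\xi(\{f>t\})$ is non-increasing, the layer-cake formula gives $\sum_{n\ge1}a_n\le\int_0^\infty\xi(\{f>t\})\,dt=\int_X f\,d\xi<\infty$, and as $a_0\le1$ we obtain $\sum_{n\ge0}a_n<\infty$. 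Hence the tails $r_n:=\sum_{k\ge n}a_k$ decrease to $0$, so I can choose integers $0=N_0<N_1<N_2<\cdots$ with $r_{N_j}\le2^{-j}$ and define a non-decreasing integer sequence $(c_n)_{n\ge0}$ by $c_n:=j$ on $N_j\le n<N_{j+1}$. Then $c_n\to\infty$, the successive increments $c_{n+1}-c_n$ lie in $\{0,1\}$, and
\[
  \sum_{n\ge0}c_n a_n\ \le\ \sum_{j\ge1}j\,r_{N_j}\ \le\ \sum_{j\ge1}j\,2^{-j}\ <\ \infty .
\]

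\emph{Construction of $\psi$.} I would take $\psi:[0,\infty)\to[0,\infty)$ to be the continuous function that is affine on each interval $[n,n+1]$ with $\psi(n)=c_n$. Because the increments of $(c_n)$ are at most $1$, $\psi$ is non-decreasing with slopes in $[0,1]$, hence $1$-Lipschitz, and $\psi(y)\ge c_{\lfloor y\rfloor}\to\infty$. If genuine (strict) monotonicity is wanted, I would instead use $\tfrac12\psi(y)+\tfrac12\,y/(1+y)$, which is still non-negative, $1$-Lipschitz and unbounded, and whose primitive is dominated by $\tfrac12\Psi+\tfrac12 f$ at $f$-values, so the estimate below is unaffected.

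\emph{Integrability of $\Psi(f)$.} On $[n,n+1]$ we have $\psi\le c_{n+1}$, so $\Psi(y)=\int_0^y\psi\le\sum_{k=0}^{\lfloor y\rfloor+1}c_k$ for every $y\ge0$. Writing $p_n:=\xi(\{n\le f<n+1\})$ and $q_n:=\xi(\{f\ge n\})=\sum_{m\ge n}p_m$, this yields
\[
  \int_X\Psi(f)\,d\xi\ \le\ \sum_{n\ge0}\Big(\sum_{k=0}^{n+1}c_k\Big)p_n\ =\ \sum_{k\ge0}c_k\,q_{(k-1)\vee0},
\]
the last equality by interchanging the two sums (Tonelli; all terms are non-negative). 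Since $q_0=1$ and $q_{k-1}\le a_{k-2}$ for $k\ge2$, and since $c_{j+2}\le c_j+2$, the right-hand side is at most $c_0+c_1+\sum_{j\ge0}(c_j+2)a_j$, which is finite by the two displayed bounds above. Hence $\Psi(f)\in L^1(X,\xi)$, completing the proof.

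There is no genuine obstacle here; the two points that need a little care are (a) arranging $(c_n)$ so that it increases by at most $1$ at a time, which is exactly where the $1$-Lipschitz requirement is secured, and (b) carrying out the double-sum interchange via Tonelli rather than summation by parts, which avoids having to show that a boundary term such as $\big(\sum_{k\le N+1}c_k\big)q_{N+1}$ is negligible — something not obvious from the data, since we control $\sum_n c_n a_n$ but not, say, $\sum_n n^2 a_n$.
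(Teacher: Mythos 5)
Your proof is correct; I checked the tail estimate, the choice of the block sequence $(c_n)$, the Lipschitz bound, and the Tonelli interchange, and each step goes through (the final bound $c_0+c_1+\sum_j(c_j+2)a_j<\infty$ does follow from $\sum_j c_ja_j<\infty$ and $\sum_j a_j<\infty$). The overall strategy is the one the paper uses --- force the modulus $\psi$ to increase by at most $1$ over intervals of length at least $1$, with the growth rate dictated by the decay of the tails of $f$ --- but the execution is a genuinely different decomposition. The paper runs the classical Rao--Ren construction: it picks sparse thresholds $c_n$ with $\int_{\{f>c_n\}}f\,d\xi<2^{-n}$, builds the step function $\phi=n$ on $[c_n,c_{n+1})$, quotes the classical argument for $\Phi(f)\in L^1$, and then obtains $\psi$ by joining $(c_n,n-1)$ to $(c_{n+1},n)$ affinely, so that $\psi\leq\phi$ gives $\Psi\leq\Phi$ and integrability comes for free by domination. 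You instead discretise the domain into unit intervals, control the values $c_n=\psi(n)$ via the summability $\sum_n\xi(\{f>n\})\leq\int f\,d\xi$ (layer cake), and verify $\int\Psi(f)\,d\xi<\infty$ directly by a double-sum interchange. Your route is self-contained and avoids reproducing or citing the de la Vall\'ee Poussin integrability step, at the cost of the explicit Tonelli computation; the paper's route is shorter on the page but leans on the external reference. Your remark on upgrading $\psi$ to a strictly increasing function via $\tfrac12\psi(y)+\tfrac12\,y/(1+y)$ is also correct and in fact addresses a point the paper glosses over, since the paper's own $\psi$ is constant on $[0,c_1]$ and hence only non-decreasing there.
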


\begin{proof}
First, we follow the proof of the theorem of de la Vall\'ee Poussin as in \cite[Theorem 1.2]{Rao-Ren}. Let $c_1,c_2,c_3,\ldots$ be an increasing sequence of positive integers such that
\begin{align*}
  \int_{\{f > c_n\}} f \, d\xi < \frac{1}{2^n}.
\end{align*}
For $m\in \N$, let $q_m$ be the number of natural numbers $k$ such that $c_k\leq m$; that is, $q_m:=\left|\{k\in \N: c_k \leq m \} \right|$.
Since $(c_n)_{n \in \N}$ is an increasing sequence,
$$\{k\in \N: c_k \leq c_n \}=\{1,2, \ldots, n \}.$$
Hence, $q_{c_n}=n$. Moreover, for every $m\in \N$ with $c_n\leq m <c_{n+1}$, we have
$$\{k\in \N: c_k \leq m \}=\{k\in \N: c_k \leq c_n \},$$
so that
$$q_m=n \;\; \textrm{for all} \;\; m\in \N\cap [c_n,c_{n+1}).$$
Set $c_0:=0$, and let $\phi : [0,\infty)\to [0,\infty)$ be the function defined by
\begin{align*}
    \phi(y)=n \;\; \text{if} \;\; n\in \N\cup\{0\} \;\;\textrm{and}\;\;y\in [c_n,c_{n+1}).
\end{align*}
Clearly, $\phi$ is an increasing step-function on $[0,\infty)$ with $\lim_{y\to \infty} \phi(y)=\infty$. As in the proof of \cite[Theorem 1.2]{Rao-Ren}, it can be shown that if we define
\begin{align*}
\Phi(y)=\int_{0}^{y} \phi(z)dz, \;\; y\geq 0,
\end{align*}
then
\begin{align} \label{Eq:De la P-3}
    \Phi(f)\in L^1(X,\xi).
\end{align}
Now we construct our desired function $\psi : [0,\infty) \to [0,\infty)$ as follows: Let $\psi \equiv 0$ on $[0,c_1]$ and, for every $n\in \N$, let $\psi$ on $[c_n, c_{n+1}]$ be defined by requiring that on this interval, its graph corresponds to the line connecting the points $(c_n,n-1)$ and $(c_{n+1},n)$ in $\mathbb{R}^2$. It is clear that $\psi$ is continuous and increasing and that $0\leq \psi \leq \phi$ on $[0,\infty)$. Also, $\lim_{y\to \infty} \psi(y)=\infty$. If we now define $\Psi : [0,\infty) \to [0,\infty)$ by
\begin{align*}
\Psi(y)=\int_{0}^{y} \psi(z)dz, \;\; y\geq 0,
\end{align*}
then $0\leq \Psi \leq \Phi$ on $[0,\infty)$. By \eqref{Eq:De la P-3}, this implies $\Psi(f)\in L^1(X,\xi)$. Finally, note that for every $n\in \N$, the function $\psi$ is affine on $[c_n,c_{n+1}]$, with $\psi \equiv 0$ on $[0,c_1]$ and
$$\psi(c_{n+1})-\psi(c_n)=n-(n-1)=1, \;\; n\geq 1.$$
As $c_{n+1}-c_n\geq 1$ for all $n\in \N$, the function $\psi$ is $1$-Lipschitz on $[c_n,c_{n+1}]$. Since $\psi$ is increasing on $[0,\infty)$, it follows that it is $1$-Lipschitz on $[0,\infty)$.
\end{proof}

\begin{thm}\label{P:De LA P function-Subadditive}
Let $(X,\xi)$ be a probability space, and let $f\in L^1(X,\xi)$ be a non-negative function. Let $\psi$ and $\Psi$ be functions as in Theorem \ref{T:De LA V Poussin}. Then the function $F : [0,\infty) \to [0,\infty)$ defined by
\begin{align*}
    F(y)=\Psi(\log (1+y))=\int_{0}^{\log(1+y)} \psi(z)dz, \;\; y\geq 0,
\end{align*}
is increasing and differentiable on $[0,\infty)$. Moreover, there exists $M> 0$ such that
\begin{align*}
  F(y+y') \leq F(y)+F(y')+M \;\;\textrm{for all}\;\; y,y'\geq 0.
\end{align*}
\end{thm}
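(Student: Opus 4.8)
The probability space and $f$ no longer play a role here: the statement depends only on the three properties of $\psi$ coming from Theorem~\ref{T:De LA V Poussin}, namely that $\psi : [0,\infty)\to[0,\infty)$ is increasing, $1$-Lipschitz, and satisfies $\psi(y)\to\infty$. The plan is to reduce the whole statement to one structural fact — that $F$ is $C^1$ with a bounded derivative that is non-increasing on a half-line — and then deduce weak subadditivity from that by a short case analysis.

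First I would record the elementary facts. Since $\psi$ is continuous, $\Psi(y)=\int_0^y\psi$ is $C^1$ with $\Psi'=\psi$, and composing with $y\mapsto\log(1+y)$ shows that $F$ is $C^1$ on $[0,\infty)$ with $F'(y)=\psi(\log(1+y))/(1+y)$. In particular $F'\ge 0$ (so $F$ is increasing) and $F(0)=\Psi(0)=0$, $F\ge 0$. Writing $u=\log(1+y)$ and using that $\psi$ is $1$-Lipschitz, so $\psi(u)\le\psi(0)+u$, one gets $F'(y)=\psi(u)e^{-u}\le(\psi(0)+u)e^{-u}$; the right-hand side is continuous on $[0,\infty)$ and vanishes at infinity, hence $K:=\sup_{y\ge 0}F'(y)<\infty$.

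Next I would show $F'$ is eventually non-increasing. Using that $\psi$ is increasing with $\psi\to\infty$, fix $u_0\ge 0$ with $\psi(u_0)\ge 1$ and set $y_0:=e^{u_0}-1$, so $\log(1+y_0)=u_0$. As $\psi$ is $1$-Lipschitz, $\psi'\le 1$ almost everywhere, so the absolutely continuous function $h(u):=\psi(u)e^{-u}$ satisfies $h'(u)=(\psi'(u)-\psi(u))e^{-u}\le 0$ for a.e.\ $u\ge u_0$, because there $\psi(u)\ge\psi(u_0)\ge 1\ge\psi'(u)$. Hence $h$ is non-increasing on $[u_0,\infty)$; since $F'(y)=h(\log(1+y))$ and $y\mapsto\log(1+y)$ is an increasing bijection of $[y_0,\infty)$ onto $[u_0,\infty)$, the function $F'$ is non-increasing on $[y_0,\infty)$.

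Finally, for the weak subadditivity I would set $M:=y_0K+1$, fix $y,y'\ge 0$, and assume $y\le y'$ by symmetry. If $y\le y_0$, then, since $F'\le K$, we have $F(y+y')-F(y')=\int_{y'}^{y'+y}F'\le yK\le y_0K<M$, so $F(y+y')\le F(y')+M\le F(y)+F(y')+M$ as $F(y)\ge 0$. If $y>y_0$ (hence $y'>y_0$ as well), then, since $F'$ is non-increasing on $[y_0,\infty)$, shifting the length-$y'$ interval $[y,y+y']$ leftward to $[y_0,y_0+y']$ only increases the integral, giving
\[
F(y+y')-F(y)=\int_y^{y+y'}F'\le\int_{y_0}^{y_0+y'}F'=\bigl(F(y')-F(y_0)\bigr)+\int_{y'}^{y_0+y'}F'\le F(y')+y_0F'(y_0)\le F(y')+M,
\]
where we used $\int_{y'}^{y_0+y'}F'\le y_0F'(y')\le y_0F'(y_0)\le y_0K$. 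In both cases $F(y+y')\le F(y)+F(y')+M$, which is the claim. I expect the second case to be the only real obstacle: it forces one to know that $F'$ actually decays, which is precisely where the $1$-Lipschitz property of $\psi$ is used — it ensures the $1/(1+y)$ factor in $F'$ eventually beats the growth of $\psi(\log(1+y))$ — and this decay is exactly what lets one transfer the increment of $F$ over $[y,y+y']$ to its increment over $[0,y']$ at the cost of only a bounded error.
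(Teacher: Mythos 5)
Your proof is correct and follows essentially the same route as the paper: both arguments reduce the statement to showing that $F'(y)=\psi(\log(1+y))/(1+y)$ is bounded on $[0,\infty)$ and non-increasing on a half-line $[y_0,\infty)$ (via the substitution $u=\log(1+y)$ and the function $\psi(u)e^{-u}$, using that $\psi$ is $1$-Lipschitz and eventually $\geq 1$), and then extract the weakly subadditive bound. The only cosmetic difference is in the last step, where the paper invokes subadditivity of the shifted concave function $y\mapsto F(y+c)-F(c)$ together with the mean value theorem, while you carry out the equivalent integral-shifting comparison by hand.
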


\begin{proof}
Since $\psi$ is continuous and increasing on $[0,\infty)$, the function $F$ is continuously differentiable and increasing on $[0,\infty)$, and we have
\begin{align*}
  F'(y)=\frac{\psi(\log(1+y))}{1+y} \;\; \textrm{for all} \;\; y\geq 0.
\end{align*}
We claim that there exists $c>0$ such that $F'$ is decreasing on $[c,\infty)$.
In order to show this, by replacing $y$ with $e^u-1$, it suffices to show that the function
$\Lambda : [0,\infty)\to [0,\infty)$ given by
\begin{align*}
  \Lambda(u):=F'(e^u-1)=\frac{\psi(u)}{e^u}
\end{align*}
is decreasing on $[c',\infty)$ for some $c'>0$. Since $\lim_{u \to \infty} \psi(u) = \infty$, there exists $c'>0$ such that $u\geq c'$ implies $\psi(u)\geq 1$.
Now let $c' \leq u \leq w$. Since $\psi$ is increasing and $1$-Lipschitz, we have
$$\psi(w) \leq \psi(u)+w-u,$$
and therefore,
\begin{align*}
\frac{\psi(w)}{e^w} & \leq \frac{\psi(u)+(w-u)}{e^ue^{w-u}} \leq \frac{\psi(u)e^{w-u}}{e^ue^{w-u}}=\frac{\psi(u)}{e^u},
\end{align*}
where the second inequality uses $\psi(u)\geq 1$ and $w-u \geq 0$. Hence, $\Lambda$ is decreasing on $[c',\infty)$, so $F'$ is decreasing on $[c,\infty)$ for some $c > 0$.

This has two implications: First, by a standard subadditivity argument, it follows that the function
$$[0,\infty)\to [0,\infty), \;\; y\mapsto F(y+c)-F(c)$$
is subadditive so that, as $F$ is increasing,
\begin{align}\label{Eq:subadditive-1}
F(y+y')\leq F(y+y'+c)\leq F(y+c)+F(y'+c)-F(c), \;\; y,y'\geq 0.
\end{align}
Secondly, as $F'$ is decreasing on $[c,\infty)$ and continuous on $[0,\infty)$, it is bounded on $[0,\infty)$. Thus, there exists $M_1>0$  such that
\begin{align*} 
F'(y)\leq M_1 \;\;\textrm{for all}\;\; y\geq 0.
\end{align*}
From the mean value theorem, it follows that
\begin{align}\label{Eq:subadditive-2}
F(y+c)-F(y) \leq c M_1 \;\;\textrm{for all}\;\; y\geq 0.
\end{align}
Combining \eqref{Eq:subadditive-1} and \eqref{Eq:subadditive-2}, it follows that for every $y,y'\geq 0$,
$$F(y+y') \leq F(y)+F(y')+2cM_1-F(c).$$
The final result now follows by setting $M:=\max\{1,2cM_1-F(c)\}.$
\end{proof}

\end{document}